\newcommand{\st}{\textnormal{s.t.}}
\DeclareMathOperator*{\argmax}{arg\,max}
\newcommand{\RR}{\mathbb{R}}
\newcommand{\rhot}{\tilde\rho}
\newtheorem{thm}{Theorem}[section]
\newtheorem{lem}[thm]{Lemma}
\newtheorem{prop}[thm]{Proposition}
\newtheorem{defn}{Definition}
\newtheorem{rem}{Remark}[section]
\begin{document}
\title{Distributionally Robust Observable Strategic Queues}

\author{Yijie Wang,   Madhushini Narayana Prasad,  Grani A. Hanasusanto, and John J. Hasenbein\\ Graduate Program in Operations Research and Industrial Engineering\\ The University of Texas at Austin, USA}

\date{\today}

\maketitle

\begin{abstract}

\noindent This paper presents an extension of Naor's analysis on the join-or-balk problem in observable M/M/1 queues. While all other Markovian assumptions still hold, we explore this problem assuming uncertain arrival rates under the distributionally robust settings. We first study the problem with the classical moment ambiguity set, where the support, mean, and mean-absolute deviation of the underlying distribution are known. Next, we extend the model to the data-driven setting, where decision makers only have access to a finite set of samples. We develop three optimal joining threshold strategies from the perspective of an individual customer, a social optimizer, and a revenue maximizer, such that their respective worst-case expected benefit rates are maximized. Finally, we compare our findings with Naor's original results and the traditional sample average approximation scheme. 

\mbox{}


\end{abstract}



\section{Introduction} \label{sec:intro}
	
Imposing tolls to regulate queueing systems was first studied by Naor \cite{naor1969regulation}. 
He considers a single-server first-come-first-served (FCFS) queue with stationary Poisson arrivals at a known rate $\lambda$. Service times are independent, identically, and exponentially distributed with the rate $\mu$. Customers are assumed to be risk-neutral and homogenous from an economic perspective. Each customer receives a reward of $\$R$ upon service completion and incurs a cost of $\$C$ per unit of time spent in the system (including in service). In the observable model, every arriving customer inspects the queue length and decides whether to join (reneging is not allowed) or balk (i.e., not join the queue). This strategic decision making is the key factor differentiating this model from the classic $M/M/1$ queueing model. 
	
	Naor derives an optimal threshold strategy $n$: the customer joins the queue if and only if the system length is less than ${n}$. He computes this threshold value under three different control strategies: 1)  \emph{individual optimization} ($ n_e$) where the customers act in isolation aiming to maximize their own expected net benefit rate, 2) \emph{social optimization} ($ n_s$) where the objective is to maximize the long-run rate at which customers accrue net benefit and, 3) \emph{revenue maximization} ($n_r$) where the agency imposes a toll on the customers joining the queue with the goal of maximizing its own revenue. The most important result by Naor is the relation $ n_r \leq  n_s \leq  n_e$, which implies that the customers tend to join the system at a higher rate, when left to themselves, than is socially optimal. This is because customers do not consider the negative externalities they impose on customers who arrive later. The result also implies that the revenue maximizing firms allow fewer customers to join their system than the socially optimal case. 
	
	Many authors have expanded on the seminal work by Naor \cite{naor1969regulation}---a detailed review of these game-theoretic models is presented in a recent book by Hassin and Haviv \cite{hassin2003queue}. Some of the other recent works  \cite{burnetas2007equilibrium, economou2008equilibrium, guo2011strategic} involve deriving threshold strategies in a classic Naor's setting with server shutdowns. While Economou and Kanta \cite{economou2008equilibrium} study the system with server breakdowns and repairs, Burnetas and Economou \cite{burnetas2007equilibrium} analyze the system where the server shuts off when idle and incurs a set-up time to resume. A slight variant of this model is given by Guo and Hassin \cite{guo2011strategic} where the server resumes only when the queue length exceeds a given critical length. Also, Guo and Zipkin~\cite{guo2007analysis} explore the effects of three different levels of delay information and identify the specific cases which do and do not require such information to improve the performance. Haviv and Oz~\cite{haviv2016regulating} review the properties of several existing regulation schemes and devise a new mechanism where customers are given priority based on the queue length. 
	Afe\`che and Ata~\cite{afeche2013bayesian} study the observable $M/M/1$ queue with heterogenous customers, some patient and some impatient of given proportion. 
	
	All the aforementioned works explore the Naor's model by assuming deterministic arrival and service rates. Some recent studies have relaxed this restrictive assumption by taking the arrival or the service rate as a random variable. Debo and Veeraraghavan \cite{debo2014equilibrium} consider a system where the arriving customers cannot completely observe the service rate and value. They assume that the server belongs to one of two known types, and that the service rate and prior probability for each type is known. Liu and Hasenbein~\cite{CCLiu} study a stochastic extension of Naor's model by relaxing the assumption of certain arrival rate. They assume the arrival rate is drawn from a probability distribution  that is known to the decision maker. 
	Chen and Hasenbein~\cite{chen2020knowledge} further extend the stochastic model to the unobservable setting. They show that the social optimizer induces a lower expected arrival rate than the revenue maximizer in this setting. Hassin et al.~\cite{hassin2021strategic} also investigate the unobservable stochastic model from the perspective of strategic customers and demonstrate that the model exhibits a RASTA (rate-biased arrivals see time averages) property.
	However, these works still assume the distribution of the arrival or service rate is known precisely to decision makers, which may not be realistic in practice. In this paper, we extend the classical Naor's model for observable systems by relaxing these assumptions, where we assume the arrival rate is uncertain and governed by an unknown underlying distribution, while the service rate is deterministic.

    We consider an alternate modeling paradigm called the \emph{distributionally robust optimization} (DRO)~\cite{scarf1957min, shapiro2002minimax, vzavckova1966minimax}. Unlike the traditional stochastic optimization model, DRO acknowledges the lack of full distributional information on the random arrival rate. Instead, the decision maker is assumed to have access
    to partial information such as the moments and structural properties of the distribution, or some limited historical observations. In this setting, the objective is to derive optimal threshold strategies that maximize the worst-case expected benefit rate, where the worst case is taken over an \emph{ambiguity set} of all distributions  consistent with the available information about the true distribution. Such max-min problems have been studied since the seminal work by Scarf~\cite{scarf1957min} but only have received more attention with the advent of modern robust optimization techniques \cite{ben2009robust, bertsimas2004price}. 
	Since then, a substantial body of literature is devoted to studying well-known optimization problems under uncertainty in a distributionally robust setting; see \cite{ardestani2016linearized, delage2010distributionally,hanasusanto2015distributionally, li2014distributionally, shafieezadeh2015distributionally, wiesemann2014distributionally}. However, to the best of our knowledge, the distributionally robust framework has not been considered in the context of classical Naor's observable strategic queue model. The paper fills this gap in the literature. 
	

	We first study the distributionally robust queue model with a mean-absolute deviation (MAD) ambiguity set, where partial information about the distribution mean and MAD are known. 
    The use of the MAD ambiguity set is motivated by a recent work by Eekelen et al. \cite{van2022mad} who analyze the worst-case performance of the GI/G/1 queue under mean-dispersion constraints for the interarrival and service time distributions. The authors  demonstrate that measuring the dispersion by MAD, instead of variance, significantly simplifies the analysis and enables a closed-form solution for the extremal distribution whenever the loss function is convex. Inspired by this idea, we prove the concavity of the revenue rate function in the revenue maximization problem, which leads to an analytical solution for the worst-case expectation problem. Unfortunately, the social benefit rate function in the social optimization problem is neither concave nor convex. For this complicated function, we establish that, under some mild prerequisites, the function is unimodal and the MAD ambiguity set still admits a closed-form representation for the extremal distribution. When the prerequisites do not hold, we derive tractable reformulations for the social optimization problem.

	Next, we extend our model to the data-driven setting, where queue system managers only have access to a finite number of independent and identically distributed training samples collected from historical observations. We then construct a data-driven MAD ambiguity set which mitigates estimation errors from the empirical moment estimators. The distributionally robust model with a data-driven ambiguity set admits a semidefinite programming reformulation for the social optimization problem and a linear programming reformulation for the revenue maximization problem. 
	To properly determine the robustness parameters, we establish a new distribution-free confidence interval for the empirical MAD. Although such confidence intervals exist for the empirical mean and variance \cite{delage2010distributionally}, to the best of our knowledge, none is available for the empirical MAD: Herrey~\cite{herrey1965confidence} derives confidence interval for the empirical MAD under a normally distribution data, while other works mostly focus on median-absolute deviation; see \cite{abu2018confidence,arachchige2019confidence,bonett2003confidence}. Using this result, we further derive finite-sample guarantees of the data-driven MAD model, whose optimal value provides high confidence lower bounds on the expected social benefit or revenue rate. We also benchmark our data-driven MAD ambiguity set with the popular Wasserstein ambiguity set~\cite{esfahani2017data,esfahani2018data,gao2016distributionally,pflug2007ambiguity}, which is widely used in the data-driven setting as it can offer attractive finite-sample guarantees. Our results demonstrate that the data-driven MAD model shares a similar guarantee as the Wasserstein model while yields a much more efficient reformulation.

	Our main contributions of this paper can be summarized as follows. 
	\begin{enumerate}
        \item We propose a new  model to tackle the uncertain arrival rate in Naor's strategic queue problem using the emerging DRO framework. The model does not impose any specific distributional assumption; instead, it optimizes in view of the worst-case distribution within a prescribed ambiguity set. Benefitting from this robustification framework, the model alleviates the overfitting issue and yields attractive out-of-sample performance.

        \item We prove the revenue rate function is concave, while the social benefit rate function is either concave or unimodal under some mild prerequisites. We then show that these properties enable a closed-form solution for the worst-case expectation problem with a MAD ambiguity set. For the general cases, we derive a semidefinite programming (SDP) reformulation for the social optimization problem and a linear programming reformulation for the revenue optimization problem.
        
        \item We extend the distributionally robust model to the data-driven setting, where queue system managers only have access to a finite set of historical observations. To mitigate the adverse effect of the estimation errors from the empirical MAD, we robustify the MAD ambiguity set by adding an extra layer of robustness to the empirical mean and MAD estimators. The data-driven MAD model admits a SDP reformulation for the social optimization problem and a linear programming reformulation for the revenue maximization problem. We then establish a distribution-free confidence interval for the empirical MAD and derive finite-sample guarantees for the distributionally robust model with a data-driven MAD ambiguity set. Compared with the Wasserstein ambiguity set, the data-driven MAD ambiguity set admits a more efficient reformulation of fixed complexity,  where the number of constraints does not scale with the sample size.


	\end{enumerate}
	
	The remainder of the paper is structured as follows. In Section \ref{sec:droqueue}, we propose the distributionally robust queue model and analyze the relationship between different thresholds under the distributionally robust setting. Section \ref{sec:DROM} presents tractable reformulations for the worst-case expectation problem with a classical MAD ambiguity set. Section \ref{sec:data-driven-mad} explores the distributionally robust model with a data-driven MAD ambiguity set and derives theoretical finite-sample guarantees.
	Finally, the out-of-sample performances of our distributionally robust models are assessed empirically in Section~\ref{sec:numerical}.

	\paragraph{Notation:} The set of all probability measures supported on $\Xi$ is written as $\mathcal P_0(\Xi)\coloneqq\{\mu \in \mathcal M_+: \int_{\Xi} \mu(d\xi)=1 \}$,  where $\mathcal M_+$ denotes the set of nonnegative Borel measures. All random variables are designated by tilde signs (e.g., $\tilde\rho$), while their realizations are denoted without tildes (e.g., $\rho$). We denote by $\mathbb E_\mathbb P[c(\tilde\rho)]$ the expectation of a cost function with respect to random variable $\tilde\rho$ under distribution $\mathbb P$. We define $\lfloor n \rfloor$ to be  the largest integer less than or equal to $n$ and  $\|\bm x\|_p$ to be  the $p$-norm of a vector $\bm x$. For any set $\Xi$, we let $\text{int}(\Xi)$ denote its interior. 
	The cone of $k \times k$ positive semidefinite matrices is denoted by $\mathbb S_+^k$.
	
	\section{Distributionally Robust Strategic Queues Model}\label{sec:droqueue}
	The extension of Naor's seminal queue model to the stochastic optimization setting with an uncertain arrival rate was first proposed by Chen and Hasenbein~\cite{CCLiu} who  consider an $M/M/1$ queue system with a random arrival rate $\tilde \lambda\sim \mathbb P^\star$ and a deterministic service rate $\mu$. The queue system operates under a first-come-first-served discipline, and the true distribution of the uncertain arrival rate $\tilde \lambda$ is known by the system manager. Since the service rate $\mu$ is deterministic, without loss of generality, we consider the traffic intensity~$\rhot :=\frac{\tilde\lambda}{\mu} $ as the uncertain parameter throughout the remainder of the paper. The stochastic model aims to find an optimal threshold that maximizes the expected benefit rate, i.e.,
	\[\max_{n \in \mathbb Z_+} \mathbb E_{\mathbb P^\star}[c_n(\tilde \rho)].
	\]
	Here $c_n(\tilde \rho)$ is a general return function, which can be replaced with the social benefit rate function or revenue rate function, depending on the system manager's objective. 
	
	In practice, the true distribution $\mathbb P^\star$ is never available to the system manager and typically has to be estimated using the empirical distribution generated from the historical observations. While the empirical-based methods may work well on the observed data set, they often fail to achieve an acceptable out-of-sample performance because they do not consider any possible disturbances from the limited historical observations. 
	
	In this paper, we endeavor to address this fundamental shortcoming using ideas of DRO. The DRO approach does not impose any single distribution on the uncertain arrival rate. Instead, it constructs an ambiguity set $\mathcal P$ containing all plausible probability distributions that are consistent with the partial information as well as historical observations. In this setting, the objective is to derive an optimal threshold strategy $\hat n$ that maximizes the worst-case expected benefit rate, where the worst case is taken over all distributions from within this ambiguity set, i.e.,
    \begin{equation}\label{eq:dromodel}
        \max_{n \in \mathbb Z_+} \inf_{\mathbb P \in \mathcal P} \mathbb E_{\mathbb P}[c_n(\rhot)].
    \end{equation}
    Because the model optimizes the expected benefit rate in view of the worst-case distribution, it mitigates overfitting to the observed samples and helps improve the performance in out-of-sample circumstances. 
    
    In this paper, we study  the distributionally robust model from the perspective of an individual customer, a social optimizer, and a revenue maximizer. We first derive the results that hold for any generic ambiguity set $\mathcal P$. 
    
	\subsection{Individual Optimization}
	
	
	We determine a pure threshold strategy in which each arriving customer decides to join or not join the queue based on the observed queue length, independent of the strategy adopted by other customers. A newly arrived customer makes a decision (to join or not join) based on the net gain  $R-(i+1)C/\mu$, where $i$ is the number of people currently in the queue, and will join the queue if it is nonnegative. Note that net gain is  deterministic because it is independent of the random arrival rate. Thus,  the optimal joining threshold for any arriving customer is given by
	\begin{equation}
	\label{eq:io}
	\hat{n}_e = \left\lfloor{\frac{R{\mu}}{C}} \right\rfloor.
	\end{equation} 
	This result coincides with Naor's original result (i.e., $\hat n_e=n_e$) because the net gain of a newly arrived customer only depends on the current queue length and the service rate, which are all deterministic. On the other hand, as an individual optimizer, the customer can ignore the rates of future arrivals, because they will not affect the time to service. We also remark here that the individual threshold $n_e$ can be regarded as the maximal length of the strategic queue, beyond which  no newly arrived customer will ever enter the queue as the net gain becomes negative.

	\subsection{Social Optimization}
	We next analyze the distributionally robust threshold for a social optimizer. The social benefit rate for a realization of the traffic intensity $\rho$ and a fixed threshold $n$ is given by
	\begin{equation}
	\label{eq:social_rate}
	f_n(\rho)\coloneqq\left\{\begin{array}{lll}
	     &R \mu \frac{\rho (1-\rho^n)}{1-\rho^{n+1}} -C\left( \frac{ \rho}{1-\rho} -  \frac{ (n+1)\rho^{n+1}}{1-\rho^{n+1}}\right) \quad &\textup{if}\ \rho \neq 1\\
	     & R\mu \frac{n}{n+1}-C\frac{n}{2} &\textup{if}\ \rho=1. \\
	\end{array}\right.
	\end{equation}
	One can verify that $\lim_{\rho \rightarrow 1} R \mu \frac{\rho (1-\rho^n)}{1-\rho^{n+1}} -C\left( \frac{ \rho}{1-\rho} -  \frac{ (n+1)\rho^{n+1}}{1-\rho^{n+1}}\right)=R\mu \frac{n}{n+1}-C\frac{n}{2}$, which indicates that the function $f_n(\rho)$ is continuous in $\rho$. Here, the first term $\mu \tfrac{\rho (1-\rho^n)}{1-\rho^{n+1}}$  corresponds to the probability that an arriving customer joins, while the second term $\frac{\rho}{1-\rho} -  \frac{ (n+1)\rho^{n+1}}{1-\rho^{n+1}}$ represents the expected number of customers in the queue system \cite[Equation (2.3)]{hassin2003queue}. 
	
	The distributionally robust model determines an optimal threshold $\hat{n}_s$ that maximizes 
	the worst-case expected  social benefit rate  $ Z_s(n)$, i.e., $\hat n_s \in \argmax_{n \in \mathbb Z_+} Z_s(n)$, where
	\begin{equation}
	\label{eq:mean_so1}
	Z_s(n) :=\inf_{\mathbb P \in \mathcal P } \mathbb E_{\mathbb P} \left[f_n(\rhot) \right].
	\end{equation}
     We first investigate the relationship between the optimal thresholds $\hat n_e$ and $\hat n_s$.
     
	\begin{prop}\label{prop:so<ne}
	    There exists an optimal threshold of the social optimizer less than or equal to the optimal threshold of an individual customer, i.e., $\exists \hat{n}_s \ \st \ \hat{n}_s \leq \hat{n}_e.$
	\end{prop}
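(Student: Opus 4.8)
The plan is to establish a pointwise monotonicity property of the social benefit rate in the threshold $n$, and then lift it through the expectation and the infimum. First I would rewrite the social benefit rate in the ``per-customer'' form
\[
f_n(\rho)=\mu\sum_{i=1}^{n}\pi_i^{(n)}(\rho)\Bigl(R-\tfrac{iC}{\mu}\Bigr),
\qquad
\pi_i^{(n)}(\rho)=\frac{\rho^{i}(1-\rho)}{1-\rho^{\,n+1}},
\]
obtained by regrouping the throughput term and the expected-queue-length term in \eqref{eq:social_rate}; here $\pi^{(n)}(\rho)$ is the stationary distribution of the truncated birth--death chain, which is a genuine probability vector for every $\rho>0$ with $\rho\ne1$. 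The decisive observation is that the marginal net gain $R-iC/\mu$ is nonnegative precisely when $i\le R\mu/C$, i.e.\ for $i\le\hat n_e$, and strictly negative for $i\ge\hat n_e+1$.

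Next I would derive a one-step recursion for $f_n$. A direct manipulation of two consecutive expressions gives
\[
f_{n+1}(\rho)=\bigl(1-w_n(\rho)\bigr)\,f_n(\rho)+w_n(\rho)\,\mu\Bigl(R-\tfrac{(n+1)C}{\mu}\Bigr),
\qquad
w_n(\rho)=\frac{(1-\rho)\rho^{\,n+1}}{1-\rho^{\,n+2}},
\]
and a short computation shows $w_n(\rho)\in(0,1)$ for all $\rho>0$ with $\rho\ne1$ (using the identity $1-\rho^{n+2}-(1-\rho)\rho^{n+1}=1-\rho^{n+1}$, whose sign matches that of $1-\rho^{n+2}$). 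In words, $f_{n+1}(\rho)$ is a convex combination of $f_n(\rho)$ and the scaled net gain $\mu(R-(n+1)C/\mu)$ of the marginal joining customer.

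With the recursion in hand, the core claim is that $f_n(\rho)$ is non-increasing in $n$ for $n\ge\hat n_e$, for every fixed $\rho$. I would prove by induction that $f_n(\rho)\ge\mu\bigl(R-(n+1)C/\mu\bigr)$ for all $n\ge\hat n_e$. For the base case $n=\hat n_e$, every summand in the per-customer form is nonnegative, so $f_{\hat n_e}(\rho)\ge0>\mu\bigl(R-(\hat n_e+1)C/\mu\bigr)$. For the inductive step, the convex-combination identity yields $f_{n+1}(\rho)\ge\mu\bigl(R-(n+1)C/\mu\bigr)>\mu\bigl(R-(n+2)C/\mu\bigr)$, since $R-iC/\mu$ is strictly decreasing in $i$. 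The same inequality immediately gives $f_{n+1}(\rho)\le f_n(\rho)$, because a convex combination of $f_n(\rho)$ and a quantity no larger than $f_n(\rho)$ cannot exceed $f_n(\rho)$. The boundary case $\rho=1$ then follows by continuity of $f_n$ in $\rho$, and $\rho=0$ is trivial since $f_n(0)=0$.

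Finally, pointwise monotonicity transfers to $Z_s$: for $\hat n_e\le n_1\le n_2$ and any $\mathbb P\in\mathcal P$ we have $\mathbb E_\mathbb P[f_{n_1}(\rhot)]\ge\mathbb E_\mathbb P[f_{n_2}(\rhot)]\ge Z_s(n_2)$, and taking the infimum over $\mathbb P$ on the left gives $Z_s(n_1)\ge Z_s(n_2)$. Hence $Z_s$ is non-increasing beyond $\hat n_e$, so $\max_{n\in\mathbb Z_+}Z_s(n)=\max_{0\le n\le\hat n_e}Z_s(n)$ is attained at some $\hat n_s\le\hat n_e$, as claimed. I expect the main obstacle to be the pointwise step: extracting the convex-combination recursion cleanly and arranging the induction so that the hypothesis ($f_n$ dominating the marginal net gain) is exactly what both closes the induction and yields monotonicity; the transfer through the expectation and the worst-case infimum is then routine.
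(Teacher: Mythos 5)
Your proof is correct, but it reaches the key pointwise inequality by a genuinely different route than the paper. The paper argues by contradiction: it cites Naor's deterministic results---that the deterministic social threshold satisfies $n_s(\rho)\le n_e$ for every fixed $\rho$, and that $n\mapsto f_n(\rho)$ is discretely unimodal---to conclude $f_{\hat n_e}(\rho)\ge f_{\hat n_s}(\rho)$ pointwise whenever $\hat n_s>\hat n_e$, then lifts this through $\inf_{\mathbb P\in\mathcal P}\mathbb E_{\mathbb P}[\cdot]$ to show $\hat n_e$ would itself be optimal, a contradiction. You instead prove the pointwise comparison from scratch: the per-customer decomposition $f_n(\rho)=\mu\sum_{i=1}^n\pi_i^{(n)}(\rho)\bigl(R-iC/\mu\bigr)$ and the convex-combination recursion $f_{n+1}=(1-w_n)f_n+w_n\,\mu\bigl(R-(n+1)C/\mu\bigr)$ with $w_n\in(0,1)$ are both algebraically correct (I checked them), and your induction showing $f_n(\rho)\ge\mu\bigl(R-(n+1)C/\mu\bigr)$ for $n\ge\hat n_e$ simultaneously closes the induction and yields $f_{n+1}\le f_n$ pointwise beyond $\hat n_e$; the transfer through expectation and infimum is the same lifting step the paper uses. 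What your approach buys: it is self-contained (no appeal to Naor's unimodality, which the paper invokes by citation), it proves monotonicity of $Z_s$ beyond $\hat n_e$ directly rather than by contradiction, and as a byproduct it establishes \emph{existence} of an optimal threshold---the supremum reduces to a maximum over the finite set $\{0,\dots,\hat n_e\}$---whereas the paper's contradiction argument implicitly presupposes the argmax is nonempty. What the paper's approach buys is brevity, and a slightly sharper structural fact (decrease after the deterministic peak $n_s(\rho)$, not merely after $\hat n_e$), though that sharper fact is not needed for the proposition.
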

	
	\begin{proof}[Proof of Proposition \ref{prop:so<ne}]
	It is established in \cite[Equation 30]{naor1969regulation} that for any deterministic arrival rate $\lambda$ and service rate $\mu$, the optimal threshold from the perspective of a public goods regulator will be less than or equal to the optimal threshold of an individual customer. 
    Suppose that every optimal threshold that maximizes the worst-case expected social benefit rate is strictly greater than the optimal threshold of an individual customer, i.e., $\hat n_s > \hat n_e$ for all $\hat n_s \in \argmax_{n \in \mathbb Z_+} \inf_{\mathbb P \in \mathcal P } \mathbb E_{\mathbb P} \left[f_n(\rhot)\right]$.
    Then, based on our previous statement, for any fixed $\rho$ and any optimal $\hat n_s$, we have $n_s(\rho) \leq n_e=\hat n_e < \hat n_s$, where $n_s(\rho)$ is the corresponding optimal social threshold under the deterministic setting. Since $f_n(\rho)$ is discretely unimodal for any fixed $\rho$ \cite[Page 20]{naor1969regulation}, the relationship of the benefit rate can consequently be derived as
    \begin{equation*}
        f_{n_s(\rho)}(\rho) \geq f_{\hat n_e}(\rho) \geq f_{\hat n_s}(\rho) \quad \forall \rho \in \mathbb R_+.
    \end{equation*}
    Using this relationship, one can further establish that, for any ambiguity set $\mathcal P$,
    \begin{equation}\nonumber
        \inf_{\mathbb P \in \mathcal P} \mathbb E_{\mathbb P} \left[ f_{\hat n_e}(\rhot) \right] \geq \inf_{\mathbb P \in \mathcal P} \mathbb E_{\mathbb P} \left[ f_{\hat n_s}(\rhot) \right].
    \end{equation}
    Conversely, by the definition of $\hat n_s$, we also have $\inf_{\mathbb P \in \mathcal P} \mathbb E_{\mathbb P} \left[ f_{\hat n_e}(\rhot) \right] \leq \inf_{\mathbb P \in \mathcal P} \mathbb E_{\mathbb P} \left[ f_{\hat n_s}(\rhot) \right]$. This implies that $\inf_{\mathbb P \in \mathcal P} \mathbb E_{\mathbb P} \left[ f_{\hat n_e}(\rhot) \right] = \inf_{\mathbb P \in \mathcal P} \mathbb E_{\mathbb P} \left[ f_{\hat n_s}(\rhot) \right]$. Therefore, $\hat n_e$ is also an optimal threshold of the social optimization problem, which contradicts our previous assumption. This completes the proof.
	\end{proof}
	
	\subsection{Revenue Optimization}

    We now consider a profit-maximizing firm that aims to maximize its expected revenue rate by imposing a toll $t$ on every joining customer. In this setting, customers base their joining decision on this imposed toll $t$ and evaluate the service completion only by $R-t$. Recall that customers join the queue if and only if the expected net gain is nonnegative. Therefore, determining an optimal toll $t$ is equivalent to choosing a queue length threshold $n$ that maximizes the expected revenue rate, where $n=\left\lfloor{\frac{(R-t){\mu}}{C}} \right\rfloor.$
    The revenue rate for a realization of the traffic intensity and a fixed threshold $n$ is given by 
	\begin{equation}
	r_n(\rho)\coloneqq\left\{\begin{array}{lll}
	     \left(R\mu - Cn\right) \frac{\rho(1-\rho^n)}{1-\rho^{n+1}} &\textup{if}\ \rho \neq 1\\
	      \left(R\mu - Cn\right) \frac{n}{n+1} &\textup{if}\ \rho=1. \\
	\end{array}\right.
	\end{equation}
	One can show that $\lim_{\rho \rightarrow 1} \frac{\rho(1-\rho^n)}{1-\rho^{n+1}} = \frac{n}{n+1}$, which indicates that $f_n(\rho)$ is continuous.    The revenue rate function $r_n(\rho)$ can be rewritten as $\tfrac{R\mu-Cn}{\mu} \cdot \tfrac{\lambda(1-\rho^n)}{1-\rho^{n+1}} $, where $\tfrac{R\mu-Cn}{\mu}$ is the entrance fee for a given maximal queue length $n$, and $\frac{\lambda(1-\rho^n)}{1-\rho^{n+1}}$ is the expected number of customers joining the queue per unit time. 
	
	The distributionally robust model determines an optimal threshold $\hat{n}_r$ that 
    maximizes the worst-case expected  revenue rate  $Z_r(n)$, i.e., $\hat n_r \in \argmax_{n \in \mathbb Z_+} Z_r(n)$, where 
	\begin{equation}
	\label{eq:mean_rm}
	Z_r(n) \coloneqq \inf_{\mathbb P \in \mathcal P } \mathbb E_{\mathbb P} \left[r_n(\rhot) \right].
	\end{equation}
    Similarly, we first investigate the relationship between the optimal thresholds $\hat n_e$ and $\hat n_r$.
    
    \begin{prop}\label{prop:rm<ne}
	    There exists an optimal threshold of the revenue maximizer less than or equal to the optimal threshold of an individual customer, i.e., $\exists \hat{n}_r \ \st \ \hat{n}_r \leq \hat{n}_e.$
	\end{prop}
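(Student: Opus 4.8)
The plan is to replicate the argument used for Proposition \ref{prop:so<ne}, substituting the revenue rate function $r_n$ for the social benefit rate function $f_n$ throughout. The backbone is again an appeal to Naor's deterministic analysis combined with a contradiction argument that exploits a pointwise-in-$\rho$ ordering of the objective. First I would invoke the deterministic counterpart of the claim: for any fixed traffic intensity $\rho$ (equivalently, any deterministic arrival rate), Naor's revenue analysis guarantees that the revenue-maximizing threshold $n_r(\rho)$ satisfies $n_r(\rho) \leq n_e = \hat n_e$. This rests on the fact that the deterministic revenue rate $r_n(\rho)$ is discretely unimodal in $n$ for each fixed $\rho$, so that $n_r(\rho)$ is well defined as its (smallest) maximizer.

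Next I would argue by contradiction. Suppose every maximizer satisfies $\hat n_r > \hat n_e$ for all $\hat n_r \in \argmax_{n \in \mathbb Z_+} Z_r(n)$. Fixing any such $\hat n_r$ and any $\rho \in \RR_+$, the chain $n_r(\rho) \leq \hat n_e < \hat n_r$ together with discrete unimodality of $r_n(\rho)$ in $n$ yields the pointwise ordering
\begin{equation*}
    r_{n_r(\rho)}(\rho) \geq r_{\hat n_e}(\rho) \geq r_{\hat n_r}(\rho) \quad \forall \rho \in \RR_+.
\end{equation*}
In particular $r_{\hat n_e}(\rho) \geq r_{\hat n_r}(\rho)$ for every $\rho$, so taking expectations under any $\mathbb P$ and then the infimum over $\mathcal P$ preserves the inequality, giving $Z_r(\hat n_e) = \inf_{\mathbb P \in \mathcal P} \mathbb E_{\mathbb P}[r_{\hat n_e}(\rhot)] \geq \inf_{\mathbb P \in \mathcal P} \mathbb E_{\mathbb P}[r_{\hat n_r}(\rhot)] = Z_r(\hat n_r)$. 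The reverse inequality $Z_r(\hat n_e) \leq Z_r(\hat n_r)$ holds by optimality of $\hat n_r$, so $Z_r(\hat n_e) = Z_r(\hat n_r)$ and $\hat n_e$ is itself a maximizer. Since $\hat n_e \leq \hat n_e$, this contradicts the supposition that all maximizers strictly exceed $\hat n_e$, completing the proof.

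Finally, I would note that the only ingredient beyond the template of Proposition \ref{prop:so<ne} is the discrete unimodality of $r_n(\rho)$ in $n$, and I expect establishing (or precisely citing) this rise-then-fall behavior to be the main obstacle. Writing $r_n(\rho) = \tfrac{R\mu - Cn}{\mu}\cdot\tfrac{\lambda(1-\rho^n)}{1-\rho^{n+1}}$, the effective throughput factor $\tfrac{\lambda(1-\rho^n)}{1-\rho^{n+1}}$ is increasing in $n$ while the entrance fee $\tfrac{R\mu - Cn}{\mu}$ is decreasing and eventually negative; verifying that the product has a single interior peak for each $\rho$ is exactly the content of Naor's revenue analysis, and it is this property that licenses the pointwise ordering above. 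Everything downstream of that ordering is the same measure-monotonicity-plus-optimality argument used for the social optimizer, so once unimodality is in hand the remaining steps are routine.
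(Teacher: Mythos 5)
Your proposal is correct and is exactly the argument the paper intends: the paper's own ``proof'' of Proposition~\ref{prop:rm<ne} simply states that it parallels Proposition~\ref{prop:so<ne}, and your substitution of $r_n$ for $f_n$---deterministic bound $n_r(\rho)\leq n_e$ from Naor, pointwise ordering via discrete unimodality in $n$, then the infimum-plus-optimality contradiction---is that parallel argument carried out in full. Your closing remark correctly identifies the one ingredient that must be sourced from Naor's revenue analysis (unimodality of $r_n(\rho)$ in $n$), which the paper likewise leaves implicit.
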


	\begin{proof}
	    The proof parallels that of Proposition \ref{prop:so<ne}---we omit for brevity.
	\end{proof}
	
	Up to now, we have presented the generic distributionally robust observable queue models for an individual customer, a social optimizer, and a revenue maximizer. However, we have not specified the ambiguity set for the social and revenue optimization problems. In the following sections, we will investigate different types of ambiguity sets and derive their tractable reformulations.
	
	\section{Distributionally Robust Strategic Queues with a MAD Ambiguity Set} \label{sec:DROM}
	In this section, we study the DRO model with a mean-absolute deviation (MAD) ambiguity set. Suppose the support~$[a,b]$, mean $m$ and MAD $d$ of the random parameter $\rhot$ are known to decision makers. Then we can construct an ambiguity set containing all possible distributions that are consistent with the partial information, defined as
	\begin{equation}\label{eq:ambiguity_set}
	    \mathcal P \coloneqq \{ \mathbb P \in \mathcal P_0([a,b]):\; \mathbb E_{\mathbb P} [\rhot] = m, \; \mathbb E_{\mathbb P} \left[|\rhot- m| \right] = d\}.
	\end{equation}
	We develop efficient solution schemes to find the optimal threshold strategies for a social optimizer and a revenue maximizer, given by $\hat{n}_s$ and $\hat{n}_r$, respectively, such that the worst-case expected benefit rates are maximized. In order to derive tractable reformulations for the distributionally robust models, we assume $m \in (a,b) $ and $d \in (0, \overline d)$, where $\overline d \coloneqq \frac{2(m-a)(b-m)}{b-a}$ is the largest possible mean-absolute deviation by any distribution with the given support and mean.

	\subsection{Social Optimization}
	To determine an optimal joining threshold for a social optimizer, we compute the worst-case expected social benefit rate $Z_s(n)$ for every $n\in \mathbb Z_+$ satisfying $1 \leq n \leq n_e$, and choose an $\hat n_s$ such that $\hat n_s \in \argmax_{n \in \mathbb Z_+} Z_s(n)$. To this end, we show how to compute the worst-case expected social benefit rate for a fixed $n$. Suppose the distribution mean and MAD of $\rhot$ are precisely known, then the worst-case expected social benefit rate is given by the optimal value of the moment problem
	
	\begin{equation}
	\label{eq:so_mean_abs_primal}
	\begin{array}{ccll}
Z_s(n)=    \vspace{1mm}&\displaystyle \inf_{\mathbb \nu \in \mathcal M_+}&\displaystyle \int_{\Xi} f_n(\rho) \nu (\rm d\rho) \\ 
	\vspace{1mm}&\st& \displaystyle \int_{\Xi} |\rho-m| \, \nu (\rm{d}\rho)=\mathit d\\
	\vspace{1mm}&& \displaystyle \int_{\Xi} \rho \, \nu(\rm d\rho)=\mathit{m}\\
	&& \displaystyle \int_{\Xi} \nu(\rm d\rho)= 1,
	\end{array} 
	\end{equation}
    where $\Xi:= [a,b]$ is the support of $\rhot$ and $\mathcal M_+$ denotes the set of all nonnegative measures. 
    The first and second constraints of \eqref{eq:so_mean_abs_primal} require the nonnegative measure's MAD and mean equals to $d$ and $m$, respectively, while the third constraint restricts the nonnegative measure to be a probability measure. The objective of the problem is to find a feasible probability measure that minimizes the expected social benefit rate. However, this semi-infinite linear optimization problem is hard to solve, because it searches for the best decision from an an infinite dimensional space of probability measures.
    To derive a tractable reformulation, we focus on the dual problem. We first define $F(\rho)\coloneqq \alpha|\rho-m| +\beta \rho + \gamma$,  and derive the dual problem as
    \begin{equation}
	\label{eq:so_mean_abs_dual}
	\begin{array}{ccll}
	&\displaystyle\sup_{\alpha,\beta,\gamma \in \RR}&\displaystyle \alpha d + \beta m + \gamma\\
	&\st&  F(\rho) \leq f_n(\rho) \quad \quad \forall \rho \in [a,b].
	\end{array} 
	\end{equation}
	Notice that $F(\rho)$ is a two-piece piecewise affine function majorized by $f_n(\rho)$. We know that if $f_n(\rho)$ is a piecewise affine function or a concave function, the semi-infinite constraint will reduce to a linear constraint since we only need to check the satisfaction of the constraint at points $\rho=a,m$ and $b$. However, the social benefit rate function is neither concave nor piecewise affine, making the problem difficult. To solve this optimization problem, we first investigate the properties of the social benefit rate function $f_n(\rho)$. Some of the proofs of this section are relegated to the Appendix \ref{sec:proof_sec3}

    \begin{lem}
    \label{lem:so_propertie}
    The social benefit rate function $f_n(\rho)$ has the following properties if $\frac{R\mu}{C} \geq n+1$:
    \begin{enumerate}
        \item $f_n(\rho)$ is strictly concave for $\rho \in [0,1]$.
        \item $f_n(\rho)$ is either concave increasing or unimodal for $\rho \in [0, \infty)$.
        \item The sign of the second derivative $f_n''(\rho)$ changes at most once over $[0, \infty)$.
    \end{enumerate}
    \end{lem}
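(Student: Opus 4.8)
The plan is to analyze the second derivative $f_n''(\rho)$ directly and show it changes sign at most once, since this single fact immediately implies all three claims. First I would establish claim~1 as a warm-up: for $\rho \in [0,1]$, the social benefit rate $f_n$ is a finite sum (equivalently, an expectation over the truncated birth-death chain) of terms that are each strictly concave, or I would differentiate twice and verify $f_n''(\rho) < 0$ on $[0,1]$. A cleaner route for the whole lemma is to write $f_n$ in a form where the sign of $f_n''$ is governed by a single scalar expression. Using the identities $\tfrac{\rho(1-\rho^n)}{1-\rho^{n+1}}$ for the join probability and $\tfrac{\rho}{1-\rho} - \tfrac{(n+1)\rho^{n+1}}{1-\rho^{n+1}}$ for the expected queue length, I would compute $f_n''(\rho)$ symbolically and factor out the manifestly positive denominator $(1-\rho^{n+1})^3$ (or an appropriate positive power), reducing the sign question to that of a polynomial $g_n(\rho)$ in the numerator.

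The heart of the argument is to show this numerator polynomial $g_n$ has at most one sign change on $(0,\infty)$, i.e.\ that $f_n''$ has exactly one root (counting the transition through the removable singularity at $\rho = 1$, where $f_n$ is $C^2$ by the continuity remark already established). I would use the hypothesis $\tfrac{R\mu}{C} \ge n+1$ crucially here: this is precisely the regime in which the reward term dominates appropriately, and I expect it forces the coefficient pattern of $g_n$ to admit a single variation. Concretely, I would try to show $g_n$ is a product of a positive factor and a term of the form $(R\mu - C \cdot h_n(\rho))$ where $h_n$ is monotone increasing in $\rho$; then $f_n''(\rho) \ge 0$ exactly when $h_n(\rho) \le R\mu/C$, which by monotonicity of $h_n$ happens on an initial interval and fails thereafter, giving the single sign change of claim~3. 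One must handle $\rho = 1$ with care: since $f_n$ extends to a $C^2$ function there, the sign analysis of $g_n$ should be checked to be consistent across the singularity, either by evaluating the limit of $g_n$ and $f_n''$ at $\rho=1$ or by applying L'H\^opital to the simplified expression.

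Claims~1 and~2 then follow quickly. For claim~1, I would evaluate the sign of $f_n''$ at $\rho = 0$ (or on a small right-neighborhood) and, combined with claim~3 and the location of the unique sign-change point, confirm $f_n'' < 0$ throughout $[0,1]$; the hypothesis $\tfrac{R\mu}{C} \ge n+1$ ensures the concave region contains $[0,1]$. For claim~2, the single sign change of $f_n''$ means $f_n$ is concave on an initial interval $[0, \rho^\star]$ and convex on $[\rho^\star, \infty)$; since $f_n$ is strictly concave near the origin and increasing there, $f_n$ either remains increasing (concave increasing case, when the stationary point, if any, lies beyond the inflection in a way that precludes a second local extremum) or attains a unique interior maximum followed by a monotone tail, which is exactly unimodality. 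I would close this by noting that a function whose second derivative changes sign once from negative to positive is necessarily unimodal once one also checks the limiting behavior $f_n(\rho) \to -\infty$ as $\rho \to \infty$ (driven by the expected-queue-length cost term), ruling out the pathological convex-increasing tail.

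The main obstacle I anticipate is the symbolic control of the numerator polynomial $g_n$: establishing that it has exactly one sign change for every admissible $n$ and the full parameter range $\tfrac{R\mu}{C}\ge n+1$ is a genuine algebraic computation, and the factorization into a monotone-in-$\rho$ threshold $h_n(\rho)$ versus the constant $R\mu/C$ is the delicate step that makes the hypothesis do its work. I would expect the cleanest presentation to isolate $h_n$ explicitly, prove its monotonicity by a short derivative or log-derivative argument, and defer the heavier verification of the factorization to the Appendix, as the excerpt already signals that some proofs in this section are relegated there.
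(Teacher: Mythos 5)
Your proposal has two genuine gaps, plus an internal inconsistency. First, the limiting behavior you invoke to rule out the convex-increasing tail is false: the joining threshold caps the system at $n$ customers, so the expected-queue-length term is bounded (it tends to $n$ as $\rho \to \infty$), and in fact $f_n(\rho) \to R\mu - Cn$, a finite constant that is \emph{positive} under the hypothesis $\frac{R\mu}{C} \ge n+1$; it does not diverge to $-\infty$. Your deduction of claim 2 from claim 3 therefore collapses as written. (It is repairable---a convex function bounded above on $[\rho^\star,\infty)$ must be nonincreasing there---but you would need to make that argument, and also establish $f_n'(0)>0$, which your sketch never does.) Second, the heart of your plan, factoring the numerator of $f_n''$ as a positive factor times $R\mu - C\,h_n(\rho)$ with $h_n$ increasing, is both unproven and oriented the wrong way: it would make $f_n'' \ge 0$ on an initial interval and negative thereafter (convex-then-concave), contradicting claim 1 and your own later assertion that $f_n''<0$ on $[0,1]$; the true behavior is concave first, convex later. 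Moreover, no such clean factorization exists, because the coefficient multiplying $R\mu$ in the numerator of $f_n''$, namely $(n+1)\rho^{n-1}\left[(n+1)(\rho-1)(\rho^{n+1}+1)-2\rho(\rho^{n+1}-1)\right]$, itself changes sign on $(0,\infty)$ (it is negative at $\rho=0$ and positive for large $\rho$ when $n\ge 2$), so one cannot divide through by it to isolate a monotone threshold $h_n$. Finally, even granting claim 3, your route to claim 1 requires showing the unique inflection point lies to the right of $\rho=1$; you assert that the hypothesis ensures this but give no argument.

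For contrast, the paper treats the claims with different tools and never needs your factorization. Claim 1 follows from the decomposition $f_n(\rho) = \left(R\mu - C(n+1)\right)\frac{\rho(1-\rho^n)}{1-\rho^{n+1}} + C\left(\frac{(n+1)\rho}{1-\rho^{n+1}} - \frac{\rho}{1-\rho}\right)$, where the first fraction is strictly concave and the bracketed difference is concave on $[0,1)$ (two auxiliary lemmas); the hypothesis $\frac{R\mu}{C}\ge n+1$ enters only to make the leading coefficient nonnegative. Claims 2 and 3 are proved by clearing the removable singularity at $\rho=1$---multiplying $f_n'$ by $(1-\rho)^2$ and $f_n''$ by $(1-\rho)^3$ so the root equations become polynomial---and then counting positive roots via Descartes' rule of signs: the polynomial for $(1-\rho)^2 f_n'$ has three sign changes and at least three roots (two from $(1-\rho)^2$, one forced by $f_n'(0)>0$ and eventual negativity of $f_n'$), hence $f_n'$ has exactly one root, giving unimodality; an analogous count with $(1-\rho)^3 f_n''$ gives exactly one root of $f_n''$. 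If you pursue your ``single scalar threshold'' idea you will in effect be redoing this polynomial sign analysis, and Descartes' rule is the device that makes it tractable.
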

    
    From Lemma \ref{lem:so_propertie} we know that the social benefit rate function has some nice properties. Specifically, the function is 
    either concave increasing or unimodal on the nonnegative axis, and when it is unimodal, the function changes from a concave function to a convex function at some point. The next lemma further asserts that the complementary slackness property holds for the primal and dual problems, which will later help us determine the worst-case distribution.
    \begin{lem}
    \label{lem:so_3points}
	The optimal values of the primal-dual pair \eqref{eq:so_mean_abs_primal} and~\eqref{eq:so_mean_abs_dual} coincide, and their optimal solutions $\nu^\star$ and $(\alpha^\star,\beta^\star,\gamma^\star)$, respectively, satisfy the  complementary slackness condition
	    \begin{equation}\nonumber
        \left(f_n(\rho)-\alpha^\star|\rho-m| -\beta^\star \rho - \gamma^\star\right)\nu^\star(\rm d \rho)=0\qquad \forall \rho\in[a,b]. 
    \end{equation}
    \end{lem}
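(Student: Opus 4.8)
The plan is to establish strong duality and complementary slackness for the primal-dual pair \eqref{eq:so_mean_abs_primal}--\eqref{eq:so_mean_abs_dual} by invoking the general duality theory for moment problems. The primal \eqref{eq:so_mean_abs_primal} is a semi-infinite linear program over nonnegative measures on the compact support $\Xi=[a,b]$, subject to three moment constraints (for the MAD, the mean, and total mass). First I would verify that the moment functions $\rho\mapsto|\rho-m|$, $\rho\mapsto\rho$, and $\rho\mapsto 1$ together with $f_n$ are continuous on the compact set $[a,b]$, which guarantees that all the relevant integrals are finite and well defined. Then I would check a Slater-type constraint qualification for the primal: under the standing assumptions $m\in(a,b)$ and $d\in(0,\overline d)$, there exists a feasible probability measure whose mean equals $m$ and whose MAD equals $d$ with the moment vector $(d,m,1)$ lying in the \emph{interior} of the moment cone generated by $\{(|\rho-m|,\rho,1):\rho\in[a,b]\}$. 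Concretely, one can exhibit such a measure as a convex combination of at most three point masses at $a$, $m$, and $b$, and the interiority follows precisely because $d$ is strictly between $0$ and its maximal value $\overline d$ and $m$ is strictly interior to $[a,b]$.

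Once the constraint qualification is in place, I would apply a standard strong-duality theorem for semi-infinite conic linear programs over measures (for instance the version in the monographs on the moment problem, or the generalized Slater condition of Shapiro's duality theory for moment problems). This yields two conclusions simultaneously: the optimal values of \eqref{eq:so_mean_abs_primal} and \eqref{eq:so_mean_abs_dual} coincide (zero duality gap), and both the primal supremum and dual infimum are attained. Attainment of the primal optimizer $\nu^\star$ uses that $[a,b]$ is compact and the feasible set of measures is weak-* compact with finite moments, so the continuous objective $\int f_n\,\mathrm{d}\nu$ attains its infimum; attainment of the dual optimizer $(\alpha^\star,\beta^\star,\gamma^\star)$ follows from the same duality theorem together with boundedness of the dual feasible region implied by the Slater point. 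With both optimizers in hand and the duality gap equal to zero, complementary slackness is then a direct algebraic consequence: the equality of objective values gives
\[
\int_{[a,b]} f_n(\rho)\,\nu^\star(\mathrm d\rho)
=\alpha^\star d+\beta^\star m+\gamma^\star
=\int_{[a,b]}\bigl(\alpha^\star|\rho-m|+\beta^\star\rho+\gamma^\star\bigr)\nu^\star(\mathrm d\rho),
\]
where the last equality uses that $\nu^\star$ has MAD $d$, mean $m$, and mass $1$.

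Rearranging this identity shows that the nonnegative integrand $f_n(\rho)-F(\rho)$, which is nonnegative everywhere on $[a,b]$ by dual feasibility of \eqref{eq:so_mean_abs_dual}, integrates to zero against the nonnegative measure $\nu^\star$. A nonnegative continuous function that integrates to zero against a nonnegative measure must vanish $\nu^\star$-almost everywhere, which gives exactly the stated complementary slackness condition
\[
\bigl(f_n(\rho)-\alpha^\star|\rho-m|-\beta^\star\rho-\gamma^\star\bigr)\nu^\star(\mathrm d\rho)=0
\qquad\forall\,\rho\in[a,b].
\]
I expect the main obstacle to be the rigorous verification of the constraint qualification, namely exhibiting a strictly feasible measure whose moment vector is interior to the moment cone so that the generalized Slater condition applies; this is the hinge on which both zero duality gap and dual attainment rest. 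The properties of $f_n$ established in Lemma \ref{lem:so_propertie} are not needed for duality itself but will be exploited afterward, when the complementary slackness condition just derived is combined with the unimodal structure of $f_n$ to pin down the support of $\nu^\star$ to a small number of points and thereby obtain the closed-form worst-case distribution.
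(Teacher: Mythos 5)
Your proposal is correct and follows essentially the same route as the paper's proof: both verify a Slater-type interior-point condition for the moment cone (via a perturbed three-point measure, using $m\in(a,b)$ and $d\in(0,\overline d)$), invoke Shapiro's conic duality theory for moment problems to obtain zero duality gap together with primal attainment (from compactness of $[a,b]$ and continuity of $f_n$ and the moment functions) and dual attainment, and then read off complementary slackness from the equality of the optimal values.
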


	Combining Lemmas \ref{lem:so_propertie} and \ref{lem:so_3points}, we are ready to show that problem \eqref{eq:so_mean_abs_primal} can be solved analytically under certain conditions. Specifically, we divide this problem into three cases and derive an explicit expression of the worst-case distribution for each case.
    
	\begin{prop}
	\label{prop:so_mad}
		Assume $m\in [0,1]$ and $\frac{R\mu}{C} \geq n+1$. Let $(\rho_t, f_n(\rho_t))$ be the tangent point on $f_n$ for the line that passes through $(m,f_n(m))$. For any $n \geq 1$, we have one of the following three cases:
		\begin{enumerate}
		    \item If $f_n(b) + f'_n(b) (m-b) \geq f_n(m)$, then the extremal distribution that solves \eqref{eq:mean_so1} is a three-point distribution supported on $\rho_1 = a$, $\rho_2 = m$, $\rho_3=b$, with corresponding probabilities
		$$ p_1 = \frac{d}{2(m-a)}, \ p_2 = 1 - \frac{d}{2(m-a)} - \frac{d}{2(b-m)},\  p_3= \frac{d}{2(b-m)}.$$
		    \item If $f_n(b) + f'_n(b) (m-b) < f_n(m)$ and $d< d_0\coloneqq\frac{2(m-a)(\rho_{t}-m)}{\rho_t-a}$, then the extremal distribution is a three point distribution supported on $\rho_1 = a$, $\rho_2 = m$, $\rho_3=\rho_t$, with probabilities
	    $$ p_1 = \frac{d}{2(m-a)}, \ p_2 = 1 - \frac{d}{2(m-a)} - \frac{d}{2(\rho_t-m)},\  p_3= \frac{d}{2(\rho_t-m)}.$$
		    \item If $f_n(b) + f'_n(b) (m-b) < f_n(m)$ and $d\geq  d_0\coloneqq\frac{2(m-a)(\rho_{t}-m)}{\rho_t-a}$, then the extremal distribution is a two-point distribution supported on $\rho_1=a$, $\rho_2 = \frac{ad+2m(a-m)}{d+2(a-m)}$, with probabilities $$ p_1 = \frac{d}{2(m-a)}, \ p_2 = 1-\frac{d}{2(m-a)}.$$
		\end{enumerate}  
	\end{prop}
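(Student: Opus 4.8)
The plan is to certify optimality in each case through the complementary-slackness route of Lemma~\ref{lem:so_3points}: for the claimed support I would exhibit a primal-feasible two- or three-point distribution $\nu^\star$ together with a dual-feasible triple $(\alpha^\star,\beta^\star,\gamma^\star)$ whose minorant $F(\rho)=\alpha^\star|\rho-m|+\beta^\star\rho+\gamma^\star$ meets $f_n$ exactly on the support of $\nu^\star$. Since $F$ is piecewise affine with a single kink at $m$, it is cleanest to parametrize it by the kink value $y_0=F(m)$ and the two arm slopes $s_L$ on $[a,m]$ and $s_R$ on $[m,b]$, so that $\alpha^\star=\tfrac{s_R-s_L}{2}$ and the dual objective of \eqref{eq:so_mean_abs_dual} equals $\tfrac{s_R-s_L}{2}\,d+y_0$. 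Feasibility together with complementary slackness then pins down $F$ arm by arm, while matching the three constraints of \eqref{eq:so_mean_abs_primal} fixes the point masses.

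For the left arm I would invoke $m\le 1$, so that by Lemma~\ref{lem:so_propertie}(1) the function $f_n$ is concave on $[a,m]$; hence an affine function lies below $f_n$ on $[a,m]$ iff it does at the endpoints, and the steepest admissible left arm through $(m,y_0)$ is the chord to $(a,f_n(a))$. This forces $a$ into the support in every case. The right arm is the crux. I would characterize the largest slope $s_R$ of a line through $(m,f_n(m))$ that stays below $f_n$ on $[m,b]$ as $\min_{\rho\in(m,b]}\psi(\rho)$, where $\psi(\rho)=\frac{f_n(\rho)-f_n(m)}{\rho-m}$, and compute $\psi'(\rho)=\frac{f_n(m)-g(\rho)}{(\rho-m)^2}$ with $g(\rho)=f_n(\rho)+f_n'(\rho)(m-\rho)$. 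Since $g'(\rho)=f_n''(\rho)(m-\rho)$, Lemma~\ref{lem:so_propertie}(3) (single sign change of $f_n''$) yields that $g$ increases on the concave stretch and decreases on the convex stretch, so $\psi$ first decreases and then increases, attaining its minimum at the tangent point $\rho_t$ determined by $g(\rho_t)=f_n(m)$. Translating the case hypothesis, $f_n(b)+f_n'(b)(m-b)=g(b)\ge f_n(m)$ is exactly $\rho_t\ge b$, which separates Case~1 (right arm is the chord to $b$, so $b$ enters the support) from Cases~2--3 (right arm is tangent at $\rho_t$, so $\rho_t$ enters the support).

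With the support identified, the masses follow from \eqref{eq:so_mean_abs_primal}: writing the mean constraint as $p_1(m-a)=p_3(\rho_3-m)$ and the MAD constraint as $p_1(m-a)+p_3(\rho_3-m)=d$ gives $p_1=\frac{d}{2(m-a)}$ and $p_3=\frac{d}{2(\rho_3-m)}$ with $\rho_3\in\{b,\rho_t\}$, and the residual mass is $p_2=1-p_1-p_3$. In Case~1 nonnegativity of $p_2$ reduces to $d\le\overline d$, which holds by assumption, so the $\{a,m,b\}$ distribution is feasible. In Cases~2--3 it reduces to $d\le d_0=\frac{2(m-a)(\rho_t-m)}{\rho_t-a}$: for $d<d_0$ the $\{a,m,\rho_t\}$ distribution is feasible (Case~2), whereas for $d\ge d_0$ the mass at $m$ would turn negative, so $m$ must leave the support. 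I would then re-solve the two-point moment system on $\{a,\rho_2\}$ to obtain $\rho_2=\frac{ad+2m(a-m)}{d+2(a-m)}$ with $p_1=\frac{d}{2(m-a)}$ and $p_2=1-p_1$, checking $\rho_2\in(m,b]$; a short computation shows $\rho_2=\rho_t$ at $d=d_0$, so the two regimes glue continuously.

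It remains to certify dual feasibility and complementary slackness. For Cases~1 and~2 the kink sits at $y_0=f_n(m)$, and the right arm (the chord to $b$, resp.\ the tangent at $\rho_t$) lies below $f_n$ on $[m,b]$ precisely because $\psi(b)$, resp.\ $\psi(\rho_t)$, is the minimum of $\psi$; combined with the left-arm concavity argument this gives $F\le f_n$ on $[a,b]$, while $F$ coincides with $f_n$ at the three support points, so complementary slackness holds and the value matches by weak duality. For Case~3 the right arm is the tangent at $\rho_2$; since $\rho_2>\rho_t$ lies on the convex stretch where $g$ decreases, its extension to $m$ gives $y_0=g(\rho_2)<f_n(m)$, consistent with $m$ being absent, and the left arm is then the chord from $(a,f_n(a))$ to $(m,y_0)$, again below $f_n$ by concavity. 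The main obstacle I anticipate is exactly this right-arm verification on the concave-then-convex portion---converting the single-sign-change property of $f_n''$ into the global inequality $F\le f_n$ on $[m,b]$---together with a clean treatment of the combinatorial change of support at the threshold $d=d_0$.
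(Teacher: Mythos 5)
Your proposal is correct, and it sits in the same primal--dual framework as the paper's proof: both certify optimality through the complementary slackness of Lemma~\ref{lem:so_3points}, identify the same touching sets $\{a,m,b\}$, $\{a,m,\rho_t\}$, $\{a,\rho_2\}$, and recover the masses from the same moment systems. Where you genuinely differ is in how the support is pinned down. The paper proceeds by elimination over the cardinality of the extremal support: one-point distributions are excluded because their MAD is zero, a two-point distribution in Case~2 is excluded because slackness would force $\rho_1=a$ and $\rho_2\ge\rho_t$, hence MAD $\ge d_0>d$, and a three-point distribution in Case~3 is excluded because $d_0$ is the largest MAD attainable on that support; the geometry of the optimal two-piece function itself is only asserted (``one can verify'') with a figure. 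You instead \emph{construct} the optimal dual function arm by arm, maximizing $\tfrac{s_R-s_L}{2}d+y_0$, and your $\psi$/$g$ analysis --- $\psi'(\rho)=\frac{f_n(m)-g(\rho)}{(\rho-m)^2}$ with $g'(\rho)=f_n''(\rho)(m-\rho)$, so $g$ rises then falls and $\psi$ is unimodal with minimizer $\rho_t$ --- actually \emph{proves} the tangency structure and the equivalence between the hypothesis $f_n(b)+f_n'(b)(m-b)\ge f_n(m)$ and the chord-to-$b$ regime, which the paper leaves implicit. What your route buys is a fully explicit certificate (weak duality plus equality of primal and dual values at the constructed pair); what the paper's route buys is brevity, at the cost of the unverified geometric claim. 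Finally, the obstacle you flag at the end is not a real gap. For Cases~1--2 the inequality $F\le f_n$ on $[m,b]$ is immediate from slope minimality:
\begin{equation*}
F(\rho)=f_n(m)+\Bigl(\min_{\sigma\in(m,b]}\psi(\sigma)\Bigr)(\rho-m)\le f_n(m)+\psi(\rho)(\rho-m)=f_n(\rho).
\end{equation*}
For Case~3, let $T$ be the tangent at $\rho_2$, $h\coloneqq f_n-T$, and let $\iota$ denote the inflection point of $f_n$. On the convex stretch $[\iota,b]$, $h$ is convex with $h(\rho_2)=h'(\rho_2)=0$, so $h\ge 0$ there; on the concave stretch $[m,\iota]$, $h$ is concave with $h(m)=f_n(m)-g(\rho_2)>0$ and $h(\iota)\ge 0$, so $h\ge 0$ there as well. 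With that verification added, your argument is complete and matches the proposition, including the gluing $\rho_2=\rho_t$ at $d=d_0$.
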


\begin{figure}[H]\label{fig:MAD_figure}
	\centering
	\subfigure[$f_n(m) \leq f_n(b) + f'_n(b) (m-b)$]{\includegraphics[width=.49\textwidth]{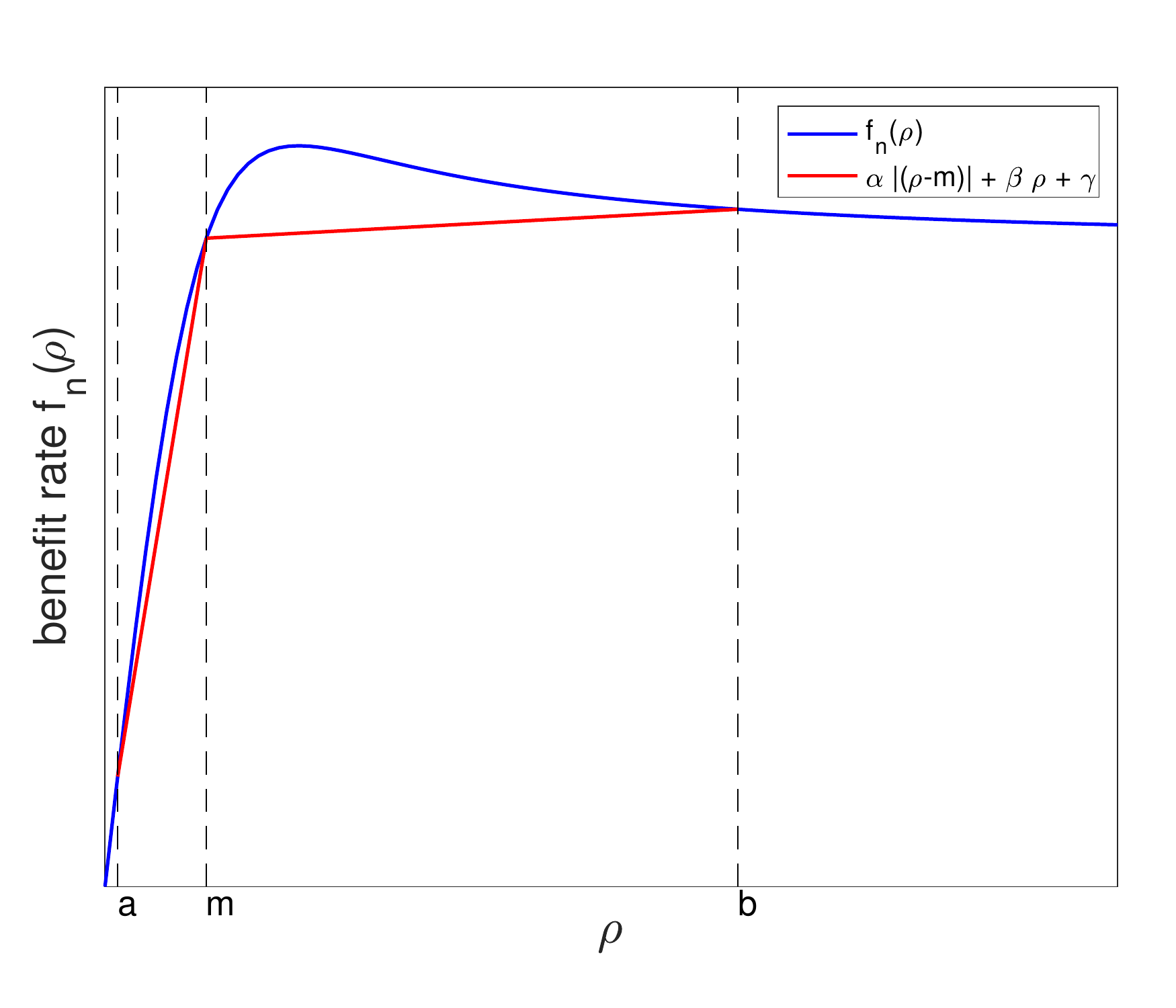}\label{fig:mad_impro_1}}	\subfigure[$f_n(m) \geq f_n(b) + f'_n(b) (m-b)$ and $d< d_0$]{\includegraphics[width=.49\textwidth]{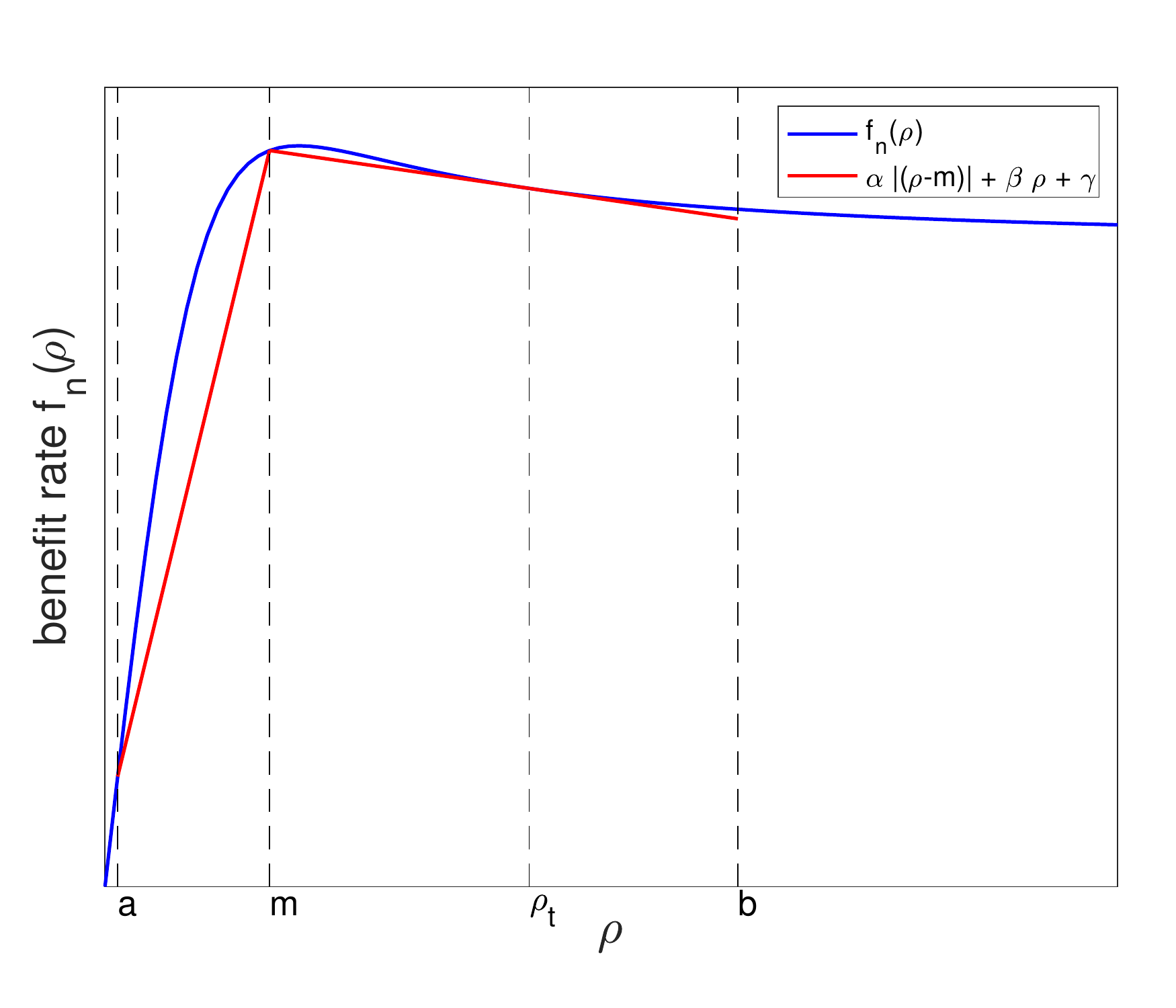}\label{fig:mad_impro_2}}
	\subfigure[$f_n(m) \geq f_n(b) + f'_n(b) (m-b)$ and $d \geq d_0$]{\includegraphics[width=.49\textwidth]{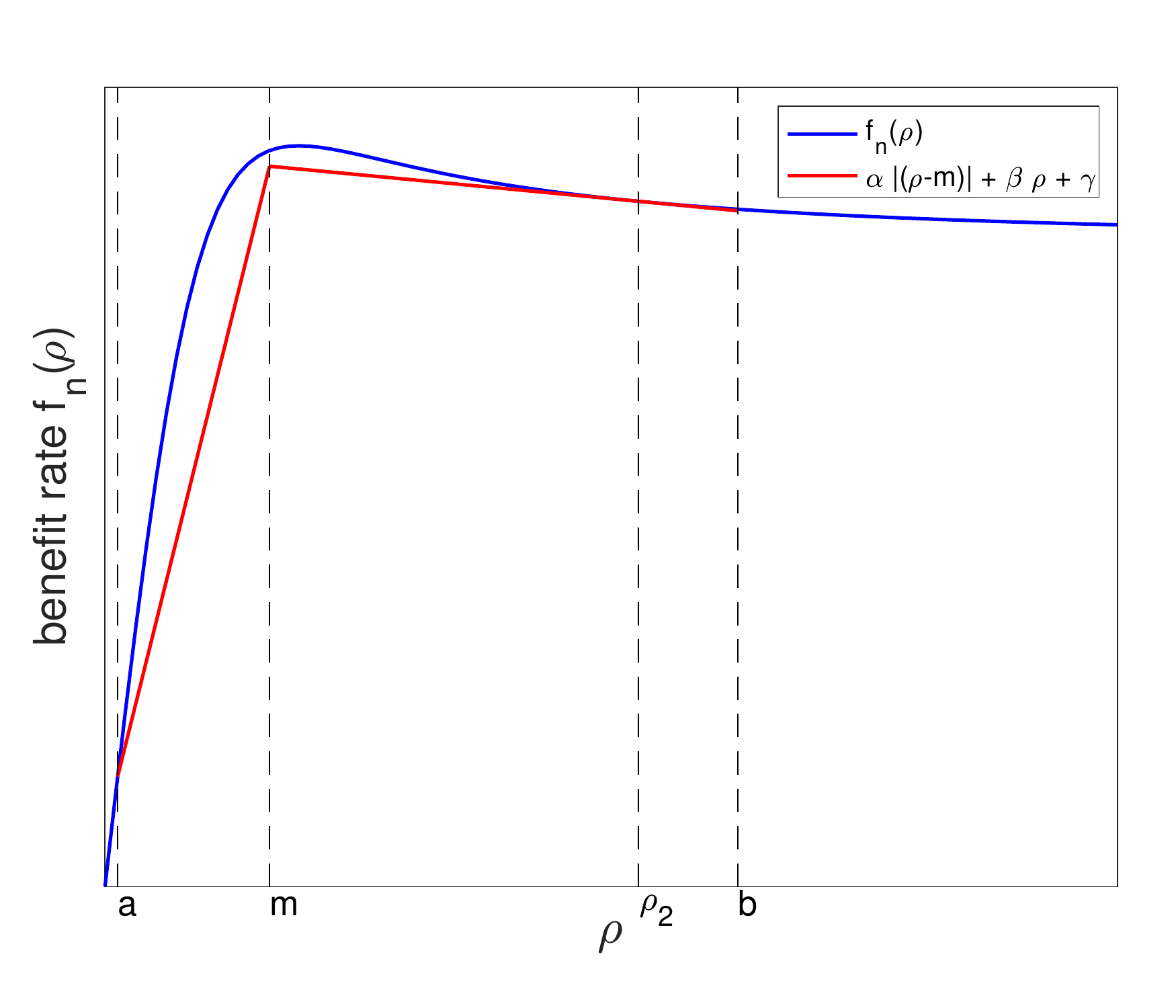}\label{fig:mad_impro_3}}
	\caption{\label{fig:oss} In Figure \ref{fig:mad_impro_1}, the optimal piecewise affine function is determined by points $(a,f_n(a))$, $(m,f_n(m))$, and $(b,f_n(b))$. In Figure \ref{fig:mad_impro_2}, the parameters satisfy $f_n(m) \geq f_n(b) + f'_n(b) (m-b)$ and $d< d_0$. Thus, the optimal two-piece piecewise affine function touches $f_n(\rho)$ at $(a,f_n(a))$, $(m,f_n(m))$, and $(\rho_t,f_n(\rho_t))$, where $(\rho_t,f_n(\rho_t)$ is the tangent point. In Figure \ref{fig:mad_impro_3}, $f_n(m) \geq f_n(b) + f'_n(b) (m-b)$ still holds, while $d\geq d_0$. In this case, the extremal distribution degenerates to a two-point distribution.}
\end{figure}

    Figure 1 depicts the optimal two-piece piecewise affine function described in Proposition \ref{prop:so_mad}. We remark that the tangent point $(\rho_t,f_n(\rho_t))$ in Figure \ref{fig:mad_impro_2} can be determined efficiently by the bisection method. Specifically, we set $[l,u]=[m,b]$ as the initial search interval for the algorithm. In each iteration, we compute the derivative at the midpoint $\rho=\frac{u+l}{2}$, and check whether it is the tangent point by calculating the difference between $f_n(m)$ and $f_n'(\frac{u+l}{2})(m-\frac{u+l}{2})+f_n(\frac{u+l}{2})$. If the difference is small enough, we 
   terminate the algorithm; otherwise, we set $u=\frac{u+l}{2}$ if the difference is positive or set  $l=\frac{u+l}{2}$ if the difference is negative, and then go back to the first step with the updated interval $[l,u]$.
   
   Proposition \ref{prop:so_mad} explicitly expresses the extremal distribution for parameters satisfying $m\leq 1$ and $\frac{R\mu}{C} \geq n+1$. Using this result, we can compute the worst-case expected social benefit rate $Z_s(n)$ efficiently.

\begin{thm}\label{thm:so_mad}
Assume $m\in [0,1]$ and $\frac{R\mu}{C} \geq n+1$. Let $(\rho_t, f_n(\rho_t))$ be the tangent point on $f_n(\rho)$ for the line that passes through  $(m,f_n(m))$. For any $n \geq 1$, we have the following three cases:
\begin{enumerate}
		    \item If $f_n(b) + f'_n(b) (m-b) \geq f_n(m)$, then 
		    \[Z_s(n)=\frac{d}{2(m-a)} f_n(a) + \left(1 - \frac{d}{2(m-a)} - \frac{d}{2(b-m)} \right) f_n(m)+ \frac{d}{2(b-m)} f_n(b).
		    \]
		    \item If $f_n(b) + f'_n(b) (m-b) < f_n(m)$ and $d< d_0\coloneqq\frac{2(m-a)(\rho_{t}-m)}{\rho_t-a}$, then 
		    \[Z_s(n)=\frac{d}{2(m-a)} f_n(a) + \left(1 - \frac{d}{2(m-a)} - \frac{d}{2(\rho_t-m)} \right) f_n(m)+ \frac{d}{2(\rho_t-m)} f_n(\rho_t).
		    \]
		    \item If $f_n(b) + f'_n(b) (m-b) < f_n(m)$ and $d\geq  d_0\coloneqq\frac{2(m-a)(\rho_{t}-m)}{\rho_t-a}$, then 
		    \[ Z_s(n)=\frac{d}{2(m-a)} f_n(a) + \left(1-\frac{d}{2(m-a)} \right) f_n\left(\frac{ad+2m(a-m)}{d+2(a-m)}\right).
		    \]
		\end{enumerate}  
\end{thm}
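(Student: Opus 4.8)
The plan is to obtain Theorem~\ref{thm:so_mad} as an immediate consequence of Proposition~\ref{prop:so_mad}. By definition, $Z_s(n)=\inf_{\mathbb P \in \mathcal P} \mathbb E_{\mathbb P}[f_n(\rhot)]$ is the optimal value of the moment problem~\eqref{eq:mean_so1}, and Proposition~\ref{prop:so_mad} asserts that, in each of the three parameter regimes, this infimum is attained by the explicitly described extremal distribution $\nu^\star$. Hence it suffices to evaluate the finitely supported expectation $Z_s(n)=\mathbb E_{\nu^\star}[f_n(\rhot)]=\sum_i p_i\, f_n(\rho_i)$, reading off the atoms $\rho_i$ and weights $p_i$ directly from the statement of the proposition.

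Carrying out the substitution is purely mechanical. In Case~1 the atoms are $\rho_1=a,\ \rho_2=m,\ \rho_3=b$, so inserting the corresponding weights into $\sum_i p_i f_n(\rho_i)$ reproduces the first displayed formula; Case~2 is identical with $b$ replaced by the tangent abscissa $\rho_t$; and in Case~3 the distribution collapses to the two atoms $\rho_1=a$ and $\rho_2=\tfrac{ad+2m(a-m)}{d+2(a-m)}$ with weights $\tfrac{d}{2(m-a)}$ and $1-\tfrac{d}{2(m-a)}$, giving the last formula. No cancellation or further simplification is needed, since the weights are already in closed form.

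The real content therefore lives entirely in Proposition~\ref{prop:so_mad} rather than in this theorem, and the only point that warrants care is the claim that $\nu^\star$ genuinely attains the infimum. This is guaranteed by the strong duality and complementary slackness established in Lemma~\ref{lem:so_3points}: the optimal dual solution $(\alpha^\star,\beta^\star,\gamma^\star)$ produces a two-piece affine minorant $F(\rho)=\alpha^\star|\rho-m|+\beta^\star\rho+\gamma^\star$ of $f_n$, and complementary slackness forces $\nu^\star$ to be supported only where $F$ touches $f_n$; the concavity/unimodality structure from Lemma~\ref{lem:so_propertie} then identifies these contact points as exactly the atoms listed above, while the three moment constraints in~\eqref{eq:so_mean_abs_primal} pin down the weights $p_i$ uniquely. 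Were one to prove the theorem from scratch, constructing this optimal minorant and establishing the case split that governs which contact configuration occurs would be the main obstacle; given Proposition~\ref{prop:so_mad}, however, the proof reduces to the elementary evaluation above. As a final sanity check I would verify in each case that the $p_i$ are nonnegative and sum to one and that $\nu^\star$ reproduces the prescribed mean $m$ and MAD $d$, confirming feasibility before reporting the optimal value.
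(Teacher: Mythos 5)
Your proposal is correct and takes essentially the same route as the paper: the paper states Theorem~\ref{thm:so_mad} as an immediate consequence of Proposition~\ref{prop:so_mad}, obtained by evaluating $\mathbb{E}_{\nu^\star}[f_n(\rhot)]=\sum_i p_i f_n(\rho_i)$ under the extremal distributions identified there, which is exactly your computation. Your added remarks on attainment via Lemma~\ref{lem:so_3points} and the feasibility sanity check are consistent with (indeed, part of) the supporting machinery the paper uses to justify Proposition~\ref{prop:so_mad} itself.
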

    Theorem \ref{thm:so_mad} enables us to solve the worst-case expectation problem analytically under certain conditions. However, for the more general case, we are unable to solve it in a closed form. In the following theorem, we show that the worst-case expectation problem admits a semidefinite programming reformulation that can be solved in polynomial time using standard off-the-shelf solvers, such as SDPT3~\cite{toh1999sdpt3} and MOSEK~\cite{mosek}.
	
		\begin{thm}
	\label{thm:MAD_so}
	For any $n \geq 1$, the worst-case expected social benefit rate $Z_s(n)$  coincides with the optimal value of the following semidefinite program.
		\begin{equation}
		\label{eq:so_sdp}
	    \begin{array}{llll}
	         &\sup \;\; &\alpha d + \beta m + \gamma &\\
	         &\st \;\; &\alpha,\beta,\gamma \in \mathbb R,  y, z \in \mathbb R^{n+3},  X, X' \in \mathbb S^{n+3}_+\\
	         && y_1 = R\mu - C - y_0 + y_{n+3}, \; y_2 = -R\mu - y_{n+3},& \\ 
	         && y_3, \cdots, y_n = 0, \;  y_{n+1} = - R\mu + C(n+1) - y_0,& \\ 
	         &&  y_{n+2} = R\mu - C n + y_0 - y_{n+3}, \;& \\ 
	         &&  y_{0} = \alpha m +\gamma , y_{n+3}= -\alpha + \beta \;&\\
	         &&\displaystyle \sum_{i + j = 2l - 1} x_{ij} = 0&\forall l \in [n+4]\\ 
	         &&\displaystyle \sum_{i + j = 2l} x_{ij} = \sum_{q=0}^{l} \sum_{r=q}^{n+3+q-l} y_r {r \choose q} {{n+3-r} \choose {l-q}} a^{r-q} m^q &  \forall l \in [n+4] \cup \{0\}\\
	         && z_1 = R\mu - C - z_0 + z_{n+3}, \; z_2 = -R\mu - z_{n+3}, &\\
	         && z_3, \cdots, z_n = 0, \;  z_{n+1} = - R\mu + C(n+1) - z_0, &\\
	         &&  z_{n+2} = R\mu - C n + z_0 - z_{n+3} \; &\\ 
	         && z_{0} = -\alpha m +\gamma , z_{n+3}= \alpha + \beta \; \\ 
	         && \displaystyle \sum_{i + j = 2l - 1} x'_{ij} = 0 & \forall l \in [n+4]\\
	         && \displaystyle \sum_{i + j = 2l} x'_{ij} = \sum_{q=0}^{l} \sum_{r=q}^{n+3+q-l} y'_r {r \choose q} {{n+3-r} \choose {l-q}} m^{r-q} b^q & \forall l \in [n+4] \cup \{0\}\\
	    \end{array}
	\end{equation}
	\end{thm}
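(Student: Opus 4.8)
The plan is to start from the dual problem \eqref{eq:so_mean_abs_dual}, whose optimal value equals $Z_s(n)$ by the strong-duality result of Lemma \ref{lem:so_3points}. The only real difficulty there is the semi-infinite constraint $F(\rho)\le f_n(\rho)$ on $[a,b]$, where $F(\rho)=\alpha|\rho-m|+\beta\rho+\gamma$ is piecewise affine with a single kink at $\rho=m$. I would first split this constraint at the kink into $F\le f_n$ on $[a,m]$ and on $[m,b]$. On the former, $|\rho-m|=m-\rho$, so $F(\rho)=(\beta-\alpha)\rho+(\alpha m+\gamma)$; on the latter, $F(\rho)=(\alpha+\beta)\rho+(\gamma-\alpha m)$. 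This is exactly the split reflected by the two variable blocks: $y$ (with $y_0=\alpha m+\gamma$, $y_{n+3}=\beta-\alpha$) paired with $X$, and $z$ (with $z_0=\gamma-\alpha m$, $z_{n+3}=\alpha+\beta$) paired with $X'$.

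Next I would clear the denominator of $f_n$. Writing $f_n=N/D$ with $D(\rho)=1-\rho^{n+1}$ and $N(\rho)=(R\mu-C)\rho-C(\rho^2+\cdots+\rho^n)+(Cn-R\mu)\rho^{n+1}$, the device for handling the sign change of $D$ at $\rho=1$ (which matters because $m$ or $b$ may exceed $1$) is the factorization $D(\rho)=(1-\rho)\sum_{k=0}^{n}\rho^k$ together with the strict positivity of $\sum_{k=0}^n\rho^k$ on the support. Multiplying through by $(1-\rho)$ then turns the rational inequality into the single polynomial inequality $Q(\rho):=(1-\rho)(N(\rho)-F(\rho)D(\rho))\ge 0$ on $[a,m]$. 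A direct expansion shows that $Q$ is a degree-$(n+3)$ polynomial whose coefficients reproduce the linear expressions $y_0,\dots,y_{n+3}$ displayed in \eqref{eq:so_sdp}: the vanishing middle block $y_3=\cdots=y_n=0$ and the identities for $y_1,y_2,y_{n+1},y_{n+2}$ fall out of this single computation, and the interval $[m,b]$ produces the analogous polynomial with coefficients $z_r$.

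It then remains to encode ``$Q\ge 0$ on $[a,m]$'' as a semidefinite constraint. I would use the fractional-linear (Goursat) substitution $\rho=\frac{a+mw}{1+w}$, which maps $w\in[0,\infty)$ bijectively onto $[a,m]$, and set $\tilde Q(w):=(1+w)^{n+3}Q(\frac{a+mw}{1+w})=\sum_{r}y_r(a+mw)^r(1+w)^{n+3-r}$. Expanding by the binomial theorem shows that the coefficient of $w^l$ in $\tilde Q$ is precisely $\sum_{q}\sum_{r}y_r\binom{r}{q}\binom{n+3-r}{l-q}a^{r-q}m^q$, i.e.\ the right-hand side of the even-power equalities in \eqref{eq:so_sdp}, and $Q\ge 0$ on $[a,m]$ is equivalent to $\tilde Q\ge 0$ on $[0,\infty)$. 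Lifting via $w=u^2$ and writing $\mathbf u=(1,u,\dots,u^{n+2})^\top$, the constraints $\sum_{i+j=2l-1}x_{ij}=0$ and $\sum_{i+j=2l}x_{ij}=\tilde Q_l$ say exactly that $\mathbf u^\top X\mathbf u=\tilde Q(u^2)$; since a univariate polynomial is nonnegative on $[0,\infty)$ iff the lift $\tilde Q(u^2)$ is a sum of squares, $X\succeq 0$ is equivalent to $\tilde Q\ge 0$ on $[0,\infty)$. Repeating the argument on $[m,b]$ with $X'$ and the substitution $\rho=\frac{m+bw}{1+w}$ (which yields the $m^{r-q}b^q$ factors), and carrying the objective $\alpha d+\beta m+\gamma$ through verbatim, produces \eqref{eq:so_sdp}.

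The step I expect to be the main obstacle is the degree bookkeeping that makes a single $(n+3)\times(n+3)$ Gram matrix suffice. The lift $\tilde Q(u^2)$ has degree $2\deg\tilde Q$, so representing it as $\mathbf u^\top X\mathbf u$ with $\mathbf u$ of length $n+3$ forces $\deg\tilde Q\le n+2$; equivalently the top coefficient $\tilde Q_{n+3}=Q(m)$ must vanish, which is exactly what the $l=n+3$ instance of the even-power equality (an empty left-hand sum) imposes. Since $Q(m)=(1-m)D(m)(f_n(m)-F(m))$, this is precisely the requirement that the affine minorant be tight at the kink, $F(m)=f_n(m)$. Justifying that one may restrict to such minorants without changing the optimal value is the delicate point: it follows from the complementary-slackness characterization of Lemma \ref{lem:so_3points} together with the fact that a MAD-extremal distribution places an atom at $\rho=m$, so that an optimal dual solution tight at $m$ always exists. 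I would also note the benign degenerate cases $m=1$ and $m^{n+1}=1$, in which the $(1-\rho)$ multiplier or $D(m)$ vanishes and the degree drops automatically.
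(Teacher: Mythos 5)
Your proof follows the same skeleton as the paper's: pass to the dual \eqref{eq:so_mean_abs_dual} (strong duality from Lemma \ref{lem:so_3points}), split the semi-infinite constraint at the kink $\rho=m$, clear denominators using $1-\rho^{n+1}=(1-\rho)\sum_{k=0}^n\rho^k$ to get degree-$(n+3)$ polynomial inequalities on $[a,m]$ and $[m,b]$, and encode interval nonnegativity by a Gram-matrix condition. Where the paper simply cites Lemma \ref{poly_lem_so1}, you re-derive it via the substitution $\rho=\frac{a+mw}{1+w}$ and the lift $w=u^2$; that part is correct and self-contained, and your coefficient bookkeeping matches the displayed constraints.

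The genuine gap is your final step. Taking ``$X\in\mathbb S_+^{n+3}$'' literally forces $\deg\tilde Q\le n+2$, i.e.\ $Q(m)=0$ (tightness of the minorant at the kink), and you justify this restriction by asserting that a MAD-extremal distribution always places an atom at $\rho=m$. That assertion is contradicted by this very paper: in case 3 of Proposition \ref{prop:so_mad} ($d\ge d_0$) the extremal distribution is supported on $\{a,\rho_2\}$ with $\rho_2=\frac{ad+2m(a-m)}{d+2(a-m)}>m$, so there is no atom at $m$. Worse, in that regime \emph{no} dual optimal solution is tight at $m$, so your restriction strictly lowers the optimal value rather than leaving it unchanged. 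Indeed, since the dual objective $\alpha d+\beta m+\gamma$ equals $\mathbb E_{\nu}[F]$ for every feasible primal $\nu$, the dual optima are exactly the feasible $F$ satisfying $F=f_n$ on $\{a,\rho_2\}$; requiring tightness at $m$ as well forces the right piece of $F$ to be the chord through $(m,f_n(m))$ and $(\rho_2,f_n(\rho_2))$. But for $d>d_0$ one has $\rho_2>\rho_t$, so the tangent to $f_n$ at $\rho_2$ passes strictly below $(m,f_n(m))$; hence $f_n'(\rho_2)$ exceeds the chord's slope, and the chord rises above $f_n$ immediately to the left of $\rho_2$ --- infeasible. The resolution is that no tightness argument is needed at all: Lemma \ref{poly_lem_so1} applied with $k=n+3$ yields Gram matrices $X=[x_{ij}]_{i,j=0,\dots,n+3}\in\mathbb S_+^{n+4}$, one dimension larger than you allowed, so the full degree $n+3$ is representable. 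The ``$\mathbb S_+^{n+3}$'' in the theorem display (and the partly vacuous range $l\in[n+4]$) is an off-by-one slip relative to the paper's own notation and to the lemma it invokes; it should be read as a typo and corrected, not taken as a constraint whose validity must be --- and in fact cannot be --- established.
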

	The proof of this theorem relies on the following lemma which expresses a univariate polynomial inequality in terms of semidefinite constraints. 
	\begin{lem}[Proposition 3.1(f) in \cite{bertsimas2005optimal}]
		\label{poly_lem_so1}		
		The polynomial $g(\rho) = \sum_{r = 0}^{k} y_r \rho^r$ satisfies $g(\rho) \geq 0$ for all $ \rho \in [a,b]$ if and only if there exists a positive semidefinite matrix $X = [x_{ij}]_{i, j = 0, \cdots, k}\in \mathbb S_+^{k+1}$, such that 
		\begin{align*}
		0 = &\sum_{i,j:i+j = 2l -1} x_{ij} && \forall l = 1, \cdots, k \\ 
		\nonumber \sum_{q=0}^{l} \sum_{r=q}^{k+q-l} y_r {r \choose q} {{k-r} \choose {l-q}} a^{r-q} b^q= & \sum_{i,j:i+j = 2l} x_{ij} && \forall l = 0, \cdots, k.
		\end{align*}		
	\end{lem}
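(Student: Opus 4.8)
The statement is the univariate case of the semidefinite representability of polynomials that are nonnegative on an interval, so the plan is to chain together three standard equivalences and then match coefficients. The key device is the Möbius substitution $\rho = \frac{a+bt}{1+t}$, which maps $t\in[0,\infty)$ increasingly and continuously onto $[a,b)$, with $\rho\to b$ as $t\to\infty$. First I would introduce the homogenized polynomial
$G(t) := (1+t)^k\, g\!\left(\tfrac{a+bt}{1+t}\right) = \sum_{r=0}^k y_r (a+bt)^r (1+t)^{k-r}$,
a polynomial of degree at most $k$. Since $(1+t)^k>0$ on $[0,\infty)$, the sign of $G$ agrees with that of $g$ along the substitution, so $g\ge 0$ on $[a,b)$ if and only if $G\ge 0$ on $[0,\infty)$; continuity of the polynomial $g$ then upgrades this to nonnegativity on the closed interval $[a,b]$, handling the endpoint $\rho=b$ that corresponds to $t=\infty$.

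Next I would expand $G$ by the binomial theorem, writing $(a+bt)^r = \sum_p \binom{r}{p} a^{r-p} b^p t^p$ and $(1+t)^{k-r} = \sum_q \binom{k-r}{q} t^q$, and collect the coefficient of $t^l$, denoted $[t^l]G$. After renaming indices this yields exactly $[t^l]G = \sum_{q=0}^l \sum_{r=q}^{k+q-l} y_r \binom{r}{q}\binom{k-r}{l-q} a^{r-q} b^q$, where the range $r\in[q,\,k+q-l]$ is forced by the supports of the two binomial coefficients ($\binom{r}{q}\neq 0$ needs $r\ge q$ and $\binom{k-r}{l-q}\neq 0$ needs $r\le k+q-l$). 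This is precisely the linear expression on the left-hand side of the even-index constraints, so those constraints assert that the degree-$2l$ coefficient of a target polynomial matches $[t^l]G$.

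The remaining step would be to reduce nonnegativity on the half-line to nonnegativity on the whole line via $t=u^2$: $G\ge 0$ on $[0,\infty)$ if and only if the \emph{even} polynomial $\sigma(u):=G(u^2)$ satisfies $\sigma\ge 0$ on all of $\mathbb{R}$. Invoking the classical fact that a univariate polynomial is nonnegative on $\mathbb{R}$ if and only if it is a sum of squares, and that such a degree-$2k$ polynomial is SOS if and only if it admits a positive semidefinite Gram matrix $X=[x_{ij}]\in\mathbb{S}_+^{k+1}$ in the monomial basis $(1,u,\dots,u^k)$ through $\sigma(u)=\sum_{i,j} x_{ij} u^{i+j}$, I would identify the coefficient of $u^m$ in $\sigma$ with the anti-diagonal sum $\sum_{i+j=m} x_{ij}$. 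The evenness of $\sigma=G(u^2)$ is exactly the vanishing of the odd anti-diagonal sums, $\sum_{i+j=2l-1} x_{ij}=0$, while its degree-$2l$ coefficient equals $[t^l]G$; these are the two families of stated constraints. Since the following statements are all equivalent---$g\ge 0$ on $[a,b]$; $G\ge 0$ on $[0,\infty)$; $G(u^2)\ge 0$ on $\mathbb{R}$; $G(u^2)$ is SOS; and existence of a PSD $X$ satisfying the stated linear constraints---the proof is complete.

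The main obstacle I anticipate is not any single equivalence but the bookkeeping that ties them together: one must verify that the nonnegative polynomial $G(u^2)$, of degree at most $2k$, admits an SOS decomposition into squares of polynomials of degree at most $k$, so that the Gram matrix is exactly $(k+1)\times(k+1)$; this is automatic in the univariate case since each square factor has degree at most $\tfrac12\deg\sigma\le k$, and any cancellation that lowers $\deg G$ below $k$ is absorbed by padding $X$ with zero rows and columns. The endpoint handling ($\rho=b\leftrightarrow t=\infty$) likewise needs a brief continuity argument rather than a direct substitution, and the coefficient identity of the second paragraph, though routine, must be carried out carefully to land on the precise binomial form in the statement. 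Since the result is quoted from \cite{bertsimas2005optimal}, one could alternatively cite it directly, but the argument above makes the construction self-contained.
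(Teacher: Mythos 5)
Your proof is correct. Note that the paper itself offers no proof of this lemma---it is imported verbatim as Proposition 3.1(f) of Bertsimas and Popescu \cite{bertsimas2005optimal}---so the only meaningful comparison is with that original source, and your argument is essentially theirs: the substitution $\rho=\frac{a+bt^2}{1+t^2}$ (which you factor into the M\"obius map followed by $t=u^2$) reduces nonnegativity on $[a,b]$ to nonnegativity on $\mathbb{R}$, where the classical univariate sum-of-squares characterization yields the $(k+1)\times(k+1)$ Gram matrix, and your coefficient bookkeeping (including the index ranges forced by the binomial supports, the endpoint $\rho=b$ via continuity, and the degree/padding issue for the Gram matrix) is all accurate.
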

	
	\begin{proof}[Proof of Theorem~\ref{thm:MAD_so}]
	Recall that the dual of $\inf_{\mathbb P \in \mathcal P}  \mathbb E_{\mathbb P} [f_n(\rhot)]$ for $\rhot$ supported on the interval $[a,b]$ is given by (cf. problem \eqref{eq:so_mean_abs_dual}):
	\begin{equation}
	\nonumber
	\begin{array}{ccll}
	&\displaystyle\sup_{\alpha,\beta,\gamma \in \RR}&\displaystyle \alpha d + \beta m + \gamma\\
	&\st& \displaystyle \alpha|\rho-m| +\beta \rho + \gamma \leq f_n(\rho) \quad \quad \forall \rho \in [a,b].
	\end{array} 
	\end{equation}
	We can deal with the semi-infinite constraint separately for the cases $\rho \leq m$ and $\rho \geq m$:
	\begin{equation}
	\nonumber
	\begin{array}{ccll}
	&\displaystyle\sup_{\alpha,\beta,\gamma \in \RR}&\displaystyle \alpha d + \beta m + \gamma\\
	&\st& \displaystyle \alpha(m-\rho) +\beta \rho + \gamma \leq f_n(\rho) \quad \quad \forall \rho \in [a,m]\\
	&& \displaystyle \alpha(\rho-m) +\beta \rho + \gamma \leq f_n(\rho) \quad \quad \forall \rho \in [m,b].
	\end{array} 
	\end{equation}
Substituting the definition of $f_n(\rho)$ in \eqref{eq:social_rate} and applying algebraic reductions yield the following polynomial inequalities:
	\begin{align}
	\footnotesize
	\label{eq:so_MAD_poly}
	\nonumber &-(\alpha m + \gamma) \rho^0 + (R\mu - C - \beta + \gamma+\alpha m + \alpha) \rho + (-R\mu -\alpha+ \beta) \rho^2 + (-R\mu + Cn + C + \alpha m + \gamma) \rho^{n+1}   \\ \nonumber  & \quad\quad+ (R\mu - Cn -\alpha m - \alpha+ \beta - \gamma) \rho^{n+2} +(\alpha-\beta) \rho^{n+3} \geq 0  \qquad  \,\forall \rho \in [a,m], \quad\textup { and }\\
	\nonumber &(\alpha m - \gamma) \rho^0 + (R\mu - C -\alpha m - \alpha- \beta + \gamma) \rho + (-R\mu +\alpha+ \beta) \rho^2 + (-R\mu + Cn + C -\alpha m+ \gamma) \rho^{n+1}   \\   & \quad\quad + (R\mu - Cn + \alpha m + \alpha+\beta - \gamma) \rho^{n+2} -(\alpha+\beta) \rho^{n+3} \geq 0 \qquad  \forall \rho \in [m,b].
	\end{align}
	The inequalities are of the form $g_1(\rho) = \sum_{r = 0}^{n+3} y_r \rho^r \geq 0$ for $\rho \in [a,m]$ and $g_2(\rho) = \sum_{r = 0}^{n+3} z_r \rho^r \geq 0$ for $\rho \in [m,b]$, where $y= (y_1,\ldots,y_{n+3})$ and $z=(z_1,\ldots,z_{n+3})$ represent the coefficients of the respective polynomial inequalities. We now invoke the result of Lemma \ref{poly_lem_so1} with $k = n+3$ to express the inequalities in \eqref{eq:so_MAD_poly} as semidefinite constraints. The resulting semidefinite problem is equivalent to the original problem, which completes the proof.
	\end{proof}
	
	   \begin{rem}
   	In this subsection, we present two results: Theorem \ref{thm:so_mad} provides a closed form solution under certain prerequisites, while Theorem \ref{thm:MAD_so} derives an SDP reformulation for the general cases.
   	It is worth noting that Theorem \ref{thm:so_mad} requires the parameters to satisfy $n \leq \frac{R\mu}{C} - 1$. By Proposition \ref{prop:so<ne}, there exists an optimal threshold $\hat{n}_s$ less than or equal to $\hat{n}_e$, i.e.,~$\exists \hat{n}_s \leq \hat {n}_e= \left\lfloor{\frac{R{\mu}}{C}} \right\rfloor$. Thus, for a strategic queue with maximum length $n \leq \left\lfloor{\frac{R{\mu}}{C}} \right\rfloor$ and  mean arrival rate $m \leq 1$, Theorem~\ref{thm:so_mad} can be applied to compute the worst-case expected social benefit rate for the first $\left\lfloor{\frac{R{\mu}}{C}} \right\rfloor-1$ cases.  This greatly speeds up to time to solve  \eqref{eq:mean_so1} since we only need to solve an SDP once for the remaining case $n=\left\lfloor{\frac{R{\mu}}{C}} \right\rfloor$. On the other hand, for a strategic queue with  mean arrival rate $m >1$, we cannot invoke Theorem \ref{thm:so_mad} anymore and need to solve an SDP for each $n$ satisfying $1 \leq n \leq \hat n_e$, $n \in \mathbb Z_+$.
   	\end{rem}
	
	\subsection{Revenue Optimization}
    
	
	
	To determine an optimal joining threshold for a revenue maximizer, we compute the worst-case expected revenue rate $Z_r(n)$ for every $n\in\mathbb Z_+$, $1 \leq n \leq n_e$, and choose an $\hat n_r$ such that $\hat n_r \in \argmax_{n \in \mathbb Z_+} \{Z_r(n)\}$. To this end, we show how to compute the worst-case expected revenue for each $n$. Suppose the mean and MAD of the uncertain parameter $\rhot$ are known, then the worst-case expected revenue rate is given by the following optimization problem:
	
	\begin{equation}
	\label{eq:rm_mean_abs_primal}
	\begin{array}{ccll}
    \vspace{1mm}Z_r(n)=&\displaystyle \inf_{\mathbb \nu \in \mathcal M_+}&\displaystyle \int_{\Xi} r_n(\rho) \nu (\rm d\rho) \\
	\vspace{1mm}&\st& \displaystyle \int_{\Xi} |\rho-m| \, \nu (\rm{d}\rho)=\mathit d\\
	\vspace{1mm}&& \displaystyle \int_{\Xi} \rho \, \nu(\rm d\rho)=\mathit{m}\\
	&& \displaystyle \int_{\Xi} \nu(\rm d\rho)= 1.
	\end{array} 
	\end{equation}
	To derive a tractable reformulation, we first investigate the property of the revenue rate function $r_n(\rho)$. 
	
	\begin{lem}\label{lem:rm_concave}
	    The revenue rate function $r_n(\rho)$ is concave for $\rho \in \mathbb R_+$.
	\end{lem}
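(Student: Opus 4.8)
The plan is to reduce the concavity of $r_n$ to a clean statement about a single positive polynomial and its reciprocal. Since we only ever consider thresholds in the relevant range $1 \le n \le \lfloor R\mu/C\rfloor$, the prefactor satisfies $R\mu - Cn \ge 0$; as concavity is preserved under multiplication by a nonnegative scalar, it suffices to prove that $q_n(\rho) \coloneqq \frac{\rho(1-\rho^n)}{1-\rho^{n+1}}$ is concave on $\mathbb R_+$.

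First I would record the algebraic identity
\[
q_n(\rho) = 1 - \frac{1-\rho}{1-\rho^{n+1}} = 1 - \frac{1}{S_n(\rho)}, \qquad S_n(\rho) \coloneqq \sum_{k=0}^{n}\rho^k,
\]
which one verifies by writing $1 - q_n(\rho) = \frac{1-\rho}{1-\rho^{n+1}}$ and cancelling the common factor $1-\rho$. Because $S_n(\rho) \ge 1 > 0$ for every $\rho \ge 0$, the function $q_n = 1 - 1/S_n$ is smooth on $\mathbb R_+$ (the apparent singularity at $\rho = 1$ in the original expression disappears), and $q_n'' = -(1/S_n)''$. A direct differentiation gives $(1/S_n)'' = \frac{2(S_n')^2 - S_n S_n''}{S_n^3}$, and since $S_n^3 > 0$, proving $q_n$ concave is equivalent to establishing the polynomial inequality
\[
\Delta_n(\rho) \coloneqq 2\bigl(S_n'(\rho)\bigr)^2 - S_n(\rho)\,S_n''(\rho) \ge 0 \qquad \forall\, \rho \ge 0.
\]

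To prove this I would expand $\Delta_n$ in powers of $\rho$ and show every coefficient is nonnegative, so that nonnegativity on $\mathbb R_+$ is immediate. Writing $S_n = \sum_i \rho^i$, the coefficient of $\rho^{s-2}$ in $\Delta_n$ is $C_s = \sum_{i+j=s,\; 0\le i,j \le n}\bigl(2ij - i(i-1)\bigr)$. Symmetrising the summand in $i$ and $j$ and using $i+j=s$ turns it into $3ij - \tfrac{s(s-1)}{2}$, so that $C_s = 3P_s - \tfrac{s(s-1)}{2}N_s$, where $N_s$ and $P_s$ denote the number of admissible pairs and the sum of $ij$ over them. A short computation then shows $C_s = 0$ whenever $0 \le s \le n$ (no boundary truncation occurs and the interior contributions cancel exactly), while for $n < s \le 2n$ the truncation of the index range to $[s-n,\,n]$ yields the closed form $C_s = (n+1)(s-n)(2n-s+1) > 0$. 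Hence all coefficients of $\Delta_n$ are nonnegative, $\Delta_n \ge 0$ on $\mathbb R_+$, and the lemma follows.

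The main obstacle is the coefficient computation in this last step: the identity $C_s = 3P_s - \tfrac{s(s-1)}{2}N_s$ together with the evaluation of $P_s$ and $N_s$ over the truncated range must be carried out carefully, most transparently after centring the index at $s/2$, which writes $ij = (s/2)^2 - u^2$ and reduces $P_s$ to a symmetric sum of squares. One should also check both parities of $s$, since the centred index $u$ is integer or half-integer accordingly; both cases produce the same value $\sum u^2 = \tfrac{h(h+1)(2h+1)}{3}$ and hence the same closed form for $C_s$. Everything else—the reciprocal identity and the reduction to $\Delta_n \ge 0$—is routine once the key simplification $q_n = 1 - 1/S_n$ is in hand.
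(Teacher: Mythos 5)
Your proof is correct, but it takes a genuinely different route from the paper's. The paper proves (in its Lemma A.2, stated as Lemma \ref{lem:rm_concave2}) that $h_n(\rho)=\tfrac{\rho(1-\rho^n)}{1-\rho^{n+1}}$ is concave by differentiating the rational form directly and running a nested sign analysis on the polynomial numerators of $h_n'$ and $h_n''$ (the auxiliary functions $\varphi_n$, $\psi_n$ and their derivatives), doing this separately on $[0,1)$ and $(1,\infty)$ and then gluing across $\rho=1$ by continuity. You instead rewrite $q_n=1-1/S_n$ with $S_n(\rho)=\sum_{k=0}^n\rho^k$, which makes the function manifestly smooth on all of $\mathbb R_+$ (no case split or gluing at $\rho=1$ is ever needed), and reduce concavity to the single polynomial inequality $2(S_n')^2-S_nS_n''\ge 0$, which you verify coefficient-by-coefficient; your closed forms check out ($C_s=0$ for $s\le n$ and $C_s=(n+1)(s-n)(2n-s+1)>0$ for $n<s\le 2n$; e.g.\ for $n=2$ one gets $\Delta_2=6\rho+6\rho^2$, matching). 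Your approach buys two things: a uniform treatment of the point $\rho=1$, which the paper handles somewhat loosely (its gluing step invokes only continuity of $r_n$, whereas concavity on two intervals plus mere continuity does not imply global concavity---one really needs continuity of the derivative, which your representation gives for free); and a finite, mechanically checkable computation in place of the paper's chain of monotonicity arguments. What the paper's route buys in exchange is strict concavity and monotonicity of $h_n$, which it records in Lemma \ref{lem:rm_concave2} and reuses in the proof of Lemma \ref{lem:so_propertie}. One point you were right to flag explicitly, which the paper leaves implicit: the reduction to $q_n$ requires the prefactor $R\mu-Cn\ge 0$, i.e.\ $n\le\lfloor R\mu/C\rfloor$; for larger $n$ the statement as written would fail, and both your proof and the paper's rely on this standing restriction on the threshold range.
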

	
	Equipped with Lemma \ref{lem:rm_concave}, we now show that the worst-case expectation problem \eqref{eq:rm_mean_abs_primal} admits a closed form solution.

	\begin{thm}
	\label{thm:rm_mean_abs}
		For any $n \geq 1$, the worst-case expected revenue rate can be derived as
		\[ Z_r(n)=\frac{d}{2(m-a)} f_n(a) + \left(1 - \frac{d}{2(m-a)} - \frac{d}{2(b-m)} \right) f_n(m)+ \frac{d}{2(b-m)} f_n(b).
		\]
	\end{thm}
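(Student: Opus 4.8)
The plan is to establish the result through linear programming duality for the moment problem \eqref{eq:rm_mean_abs_primal}, exhibiting a matching pair of primal and dual feasible solutions whose objective values coincide. Exactly as in the social case, the dual of \eqref{eq:rm_mean_abs_primal} reads
\[
\sup_{\alpha,\beta,\gamma \in \RR} \; \alpha d + \beta m + \gamma \quad \st \quad \alpha|\rho-m| + \beta\rho + \gamma \le r_n(\rho) \;\; \forall \rho \in [a,b],
\]
so that the dual searches for the two-piece piecewise affine minorant $F(\rho) = \alpha|\rho-m|+\beta\rho+\gamma$ of $r_n$ that maximizes $\alpha d + \beta m + \gamma$. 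The key simplification relative to the social optimization problem is that, by Lemma \ref{lem:rm_concave}, $r_n$ is concave on all of $\RR_+$. Consequently the tangent-line condition $r_n(b) + r_n'(b)(m-b) \ge r_n(m)$ appearing in Theorem \ref{thm:so_mad} is automatically satisfied (a concave function lies below its tangent at $b$), so we are always in the regime of the first case and a single closed-form expression applies.

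Concretely, I would first construct the candidate dual solution by letting $F$ be the piecewise linear interpolant of $r_n$ through the three points $(a, r_n(a))$, $(m, r_n(m))$, and $(b, r_n(b))$; solving the interpolation equations $F(a)=r_n(a)$, $F(m)=r_n(m)$, $F(b)=r_n(b)$ pins down $(\alpha^\star,\beta^\star,\gamma^\star)$ uniquely. Because $r_n$ is concave, its graph lies above every chord, so on $[a,m]$ and on $[m,b]$ the two affine pieces of $F$ lie below $r_n$; hence $F \le r_n$ throughout $[a,b]$ and $(\alpha^\star,\beta^\star,\gamma^\star)$ is dual feasible. I would then verify that the stated three-point distribution $\mathbb{P}^\star$ on $\{a,m,b\}$ is primal feasible: a short computation confirms that the probabilities sum to one, that $p_1 a + p_2 m + p_3 b = m$, and that $p_1(m-a)+p_3(b-m)=d$, matching the mass, mean, and MAD constraints respectively.

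It then remains to show that the two objective values agree. Since $F$ coincides with $r_n$ at every support point of $\mathbb{P}^\star$, the dual objective satisfies $\alpha^\star d + \beta^\star m + \gamma^\star = \mathbb{E}_{\mathbb{P}^\star}[F(\rhot)] = \mathbb{E}_{\mathbb{P}^\star}[r_n(\rhot)]$, which is precisely the claimed expression; weak duality then forces both the primal and the dual solutions to be optimal, so $Z_r(n)$ equals this common value. The one point requiring care is the absence of a duality gap, but this is supplied by the same complementary-slackness argument used in Lemma \ref{lem:so_3points}: the standing assumptions $m \in (a,b)$ and $d \in (0,\overline d)$ guarantee the existence of an interior feasible measure, so strong duality holds. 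Given concavity, the remaining steps are routine bookkeeping rather than a genuine obstacle, and indeed the entire statement can be read as the specialization of Proposition \ref{prop:so_mad} and Theorem \ref{thm:so_mad} to the case in which the return function is globally concave.
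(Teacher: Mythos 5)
Your proof is correct, but it takes a genuinely different route from the paper's. The paper's proof is two lines: it invokes Lemma~\ref{lem:rm_concave} for concavity of $r_n$ and then directly cites Lemma~\ref{lem:bental1972} (Ben-Tal and Hochman's classical characterization of the extremal distribution for a concave loss under a mean--MAD ambiguity set), which immediately yields the three-point distribution on $\{a,m,b\}$ and hence the stated formula. You instead re-derive that classical result from scratch via a primal--dual certificate: the piecewise affine interpolant of $r_n$ through $(a,r_n(a))$, $(m,r_n(m))$, $(b,r_n(b))$ is dual feasible by concavity (a concave function lies above its chords), the three-point distribution is primal feasible (nonnegativity of $p_2$ uses the standing assumption $d<\overline d$), and the two objective values coincide because the interpolant agrees with $r_n$ on the support points. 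What the paper's route buys is brevity; what yours buys is self-containedness and an explicit optimality certificate, which also makes transparent that this theorem is case~1 of Proposition~\ref{prop:so_mad} and Theorem~\ref{thm:so_mad} specialized to a globally concave return function. Two small remarks. First, your closing appeal to strong duality via the argument of Lemma~\ref{lem:so_3points} is superfluous: once you have exhibited a primal feasible measure and a dual feasible triple with equal objectives, weak duality alone certifies optimality of both, so no separate no-gap argument is needed. Second, your derivation produces $\mathbb E_{\mathbb P^\star}[r_n(\rhot)]$, i.e., the formula with $r_n$ evaluated at $a$, $m$, and $b$; the theorem as printed writes $f_n$ in those places, which is evidently a typo in the paper, and your expression is the intended one.
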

	
	To prove this theorem, we invoke a classical result  that characterizes the worst-case distribution  from  the MAD ambiguity set for a concave loss function.
	\begin{lem} [Theorem 3 in \cite{Bental1972}]
	\label{lem:bental1972}
	Suppose $f(\rho)$ is a concave function and the ambiguity set is defined as $\mathcal P = \{ \mathbb P \in \mathcal P_0([a,b]):\; \mathbb E_{\mathbb P} [\rhot] = m, \; \mathbb E_{\mathbb P} \left[|\rhot- m| \right] = d\}$. The extremal distribution that solves $\inf_{\mathbb P \in \mathcal P} \mathbb E_{\mathbb P} [f(\rhot)]$
	is a three point distribution supported on $\rho_1=a$, $\rho_2 = m$, $\rho_3 = b$  with probabilities
	\begin{equation}
	    \label{eq:three_points_dist}
	    p_1 = \frac{d}{2(m-a)},\ p_2 = 1 - \frac{d}{2(m-a)} - \frac{d}{2(b-m)},\ p_3= \frac{d}{2(b-m)}.
	\end{equation}
	\end{lem}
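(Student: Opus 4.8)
The plan is to certify optimality of the stated three-point distribution, call it $\mathbb P^\star$, through a matched primal--dual pair: I will exhibit $\mathbb P^\star$ as a feasible measure together with a dual-feasible piecewise-affine minorant of $f$ whose dual objective equals $\mathbb E_{\mathbb P^\star}[f(\rhot)]$, so that weak duality pins down optimality of both. The dual of $\inf_{\mathbb P\in\mathcal P}\mathbb E_{\mathbb P}[f(\rhot)]$ takes the form
\[
\sup_{\alpha,\beta,\gamma\in\RR}\ \alpha d+\beta m+\gamma\quad\st\quad \alpha|\rho-m|+\beta\rho+\gamma\le f(\rho)\ \ \forall\rho\in[a,b],
\]
and for any dual-feasible $(\alpha,\beta,\gamma)$ and any $\mathbb P\in\mathcal P$ the three moment constraints give $\mathbb E_{\mathbb P}[f(\rhot)]\ge\mathbb E_{\mathbb P}[\alpha|\rhot-m|+\beta\rhot+\gamma]=\alpha d+\beta m+\gamma$.

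First I would check that $\mathbb P^\star$ with the weights in \eqref{eq:three_points_dist} is primal feasible. A direct computation yields $p_1+p_2+p_3=1$, $p_1a+p_2m+p_3b=m$, and $p_1|a-m|+p_3|b-m|=\tfrac d2+\tfrac d2=d$, so all three moment constraints hold; the middle weight $p_2$ is strictly positive precisely because the standing assumption $d<\overline d=\tfrac{2(m-a)(b-m)}{b-a}$ forces $\tfrac{d}{2(m-a)}+\tfrac{d}{2(b-m)}<1$.

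Next I would build the dual certificate $L(\rho)$ as the two-piece piecewise-affine function that interpolates $f$ at the three nodes $a,m,b$: the chord of $f$ on $[a,m]$ glued to the chord on $[m,b]$. Concavity of $f$ (the hypothesis of the lemma) makes each chord lie on or below the graph, so $L(\rho)\le f(\rho)$ throughout $[a,b]$; concavity also forces the left slope $\tfrac{f(m)-f(a)}{m-a}$ to dominate the right slope $\tfrac{f(b)-f(m)}{b-m}$, so $L$ has a downward kink at $m$ and therefore admits the representation $L(\rho)=\alpha|\rho-m|+\beta\rho+\gamma$ with $\alpha\le0$ (explicitly, $\alpha,\beta$ are the half-difference and half-sum of the two slopes, and $\gamma$ is fixed by $L(m)=f(m)$). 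This exhibits a dual-feasible triple.

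Finally I would close the gap: since $L$ coincides with $f$ at every atom $a,m,b$ of $\mathbb P^\star$, we obtain $\mathbb E_{\mathbb P^\star}[f(\rhot)]=\mathbb E_{\mathbb P^\star}[L(\rhot)]=\alpha d+\beta m+\gamma$, which meets the weak-duality lower bound from the first paragraph; hence $\mathbb P^\star$ attains the infimum and is extremal. The only delicate point is the third step---verifying that the interpolant $L$ both minorizes $f$ and has the absolute-value-plus-affine form---but both facts are immediate consequences of the elementary chord characterization of concavity, so no genuinely hard estimate is required; the substance of the argument is simply the careful bookkeeping of the feasible primal atoms against their matching dual minorant.
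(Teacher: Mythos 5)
Your proof is correct. Note, however, that the paper does not actually prove this lemma: it is quoted directly as Theorem~3 of \cite{Bental1972}, so your argument should be compared with that classical result and with the machinery the paper builds for the harder, non-concave case (Lemma~\ref{lem:so_3points} and Proposition~\ref{prop:so_mad}). There the authors first establish strong duality and complementary slackness for the primal--dual pair \eqref{eq:so_mean_abs_primal}--\eqref{eq:so_mean_abs_dual}, which requires exhibiting interior points of the relevant moment cone and invoking conic duality results of Shapiro; complementary slackness then localizes the support of a worst-case distribution at the contact points of the optimal piecewise-affine minorant, and the probabilities are recovered from a linear system. Your route is more elementary: you guess the three-point distribution, verify primal feasibility (correctly noting that the standing assumptions $m\in(a,b)$ and $d<\overline d$ make the weights well defined and $p_2>0$), and certify optimality with the explicit dual-feasible interpolant $L(\rho)=\alpha|\rho-m|+\beta\rho+\gamma$ through $(a,f(a))$, $(m,f(m))$, $(b,f(b))$, so that only weak duality---a one-line integration of the pointwise inequality $L\le f$ against any $\mathbb P\in\mathcal P$---is needed. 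This buys a self-contained proof free of strong-duality and attainment technicalities; what it costs is generality: the certificate argument works because concavity tells you the worst-case support $\{a,m,b\}$ in advance, whereas the paper's complementary-slackness apparatus is what allows the support to be \emph{discovered} (e.g., the tangent point $\rho_t$ in Proposition~\ref{prop:so_mad}) when $f_n$ is only unimodal. One cosmetic remark: your observation that $\alpha\le 0$ is not needed for dual feasibility, since the multiplier attached to the equality MAD constraint is sign-free.
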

	\begin{proof}[Proof of Theorem \ref{thm:rm_mean_abs}]
	     From Lemma \ref{lem:rm_concave}, the revenue rate function $r_n(\rho)$ is concave. Therefore, applying Lemma \ref{lem:bental1972} yields the result.
	\end{proof}
	
\section{Extension to the Data-Driven Setting}\label{sec:data-driven-mad}

In this section, we design a distributionally robust model  using a purely data-driven ambiguity set constructed from historical samples. As we observed in the previous section, distributionally robust models with a moment ambiguity set necessitate decision makers to have access to precise values of  the mean, variance, or MAD of the true unknown distribution, which may not be realistic in practice. Decision makers usually construct such moment ambiguity sets by plugging in the point estimates generated from the historical samples. However, it is rarely the case that one can be entirely confident in these empirical estimators. For example, when the sample size is small, these empirical estimators might be far away from the true values; furthermore, some estimators, such as the empirical MAD, are even biased. In order to mitigate the adverse effects of the estimation errors, we develop a distributionally robust model with a data-driven MAD ambiguity set. 

Unlike the setting in the previous section, here we assume queue system managers only have access to $N$ independent and identically distributed samples of the traffic intensity given by $\{\hat{\rho}_i\}_{i \in [N]}$, where $\hat \rho_i = \hat \lambda_i/\mu$. 
	Suppose the true mean and MAD of the underlying distribution are unknown and belong to two confidence intervals $\mathcal T=[m_l,m_u]$ and $\mathcal D=[d_l,d_u]$ with high probabilities, where $\mathcal T$ and $\mathcal D$ are constructed using the samples. Then the proposed data-driven distributionally robust model is formulated as
	\begin{equation}\label{eq:ddmad}
	    \max_{n \in \mathbb Z_+} \inf_{m \in \mathcal T, d \in \mathcal D} \inf_{ \mathbb P \in \mathcal P} \mathbb E_{\mathbb P} [c_n(\rhot)],
	\end{equation}
	where $\mathcal P$ is the primitive MAD ambiguity set defined in \eqref{eq:ambiguity_set}. By optimizing in view of the worst-case mean and MAD, the model provides another layer of robustification  against errors from the empirical estimators.
	
	Observe that the inner two-layer infimum problem can be rewritten as
	\begin{equation}\label{eq:ddmad_inf}
	    \overline{Z}(n)\coloneqq \inf_{ \mathbb P \in \mathcal P'_N} \mathbb E_{\mathbb P} [c_n(\rhot)],
	\end{equation}
	where the modified data-driven ambiguity set is defined as
	\begin{equation}\label{eq:data-driven-ambiguityset}
	    \mathcal P'_N= \{ \mathbb P \in \mathcal P_0([a,b]):\; m_l \leq \mathbb E_{\mathbb P} [\rhot] \leq m_u, \; d_l \leq \mathbb E_{\mathbb P} \left[|\rhot- m| \right] \leq d_u\}.
	\end{equation}
	 Therefore,  the results of Propositions \ref{prop:so<ne} and \ref{prop:rm<ne} still hold, and we can obtain the optimal value of \eqref{eq:ddmad} by solving $\overline{Z}(n)$ for each $n\in \mathbb Z_+$ satisfying $1 \leq n \leq n_e$ and select the one with the largest objective value. 
	 
	 We now derive the reformulations for the worst-case expected social benefit and revenue rates. To this end, we define the worst-case expected social benefit rate with the data-driven MAD ambiguity set by
	 \[ \overline{Z}_s(n)\coloneqq \inf_{ \mathbb P \in \mathcal P'_N} \mathbb E_{\mathbb P} [f_n(\rhot)],
	 \] and the worst-case expected revenue rate with the data-driven MAD ambiguity set by
	 \[ \overline{Z}_r(n)\coloneqq \inf_{ \mathbb P \in \mathcal P'_N} \mathbb E_{\mathbb P} [r_n(\rhot)].
	 \]
	 The next theorem presents the reformulation of the worst-case expected social benefit rate. We relegate the proofs of this section to the Appendix \ref{sec:proof_sec4}.

	\begin{thm}\label{thm:data-driven-social}
	For any $n \geq 1$, the worst-case expected social benefit rate $\overline{Z}_s(n)$ coincides with the optimal value of the following semidefinite problem:
	\begin{equation*}
	   \begin{array}{llll}
	         &\sup \;\; &\gamma+\theta_1 d_l - \theta_2 d_u + \theta_3 m_l - \theta_4 m_u\\
	         &\st \;\; &\gamma \in \mathbb R, \theta_1,\theta_2,\theta_3,\theta_4 \in \mathbb R_+,  y, z \in \mathbb R^{n+3},  X, X' \in \mathbb S^{n+3}_+\\
	         && y_1 = R\mu - C - y_0 + y_{n+3}, \; y_2 = -R\mu - y_{n+3},& \\ 
	         && y_3, \cdots, y_n = 0, \;  y_{n+1} = - R\mu + C(n+1) - y_0,& \\ 
	         &&  y_{n+2} = R\mu - C n + y_0 - y_{n+3}, \;& \\ 
	         &&  y_{0} = (\theta_1-\theta_2) \hat m +\gamma , y_{n+3}= -\theta_1+\theta_2 + \theta_3-\theta_4 \;&\\
	         &&\displaystyle \sum_{i + j = 2l - 1} x_{ij} = 0&\forall l \in [n+4]\\ 
	         &&\displaystyle \sum_{i + j = 2l} x_{ij} = \sum_{q=0}^{l} \sum_{r=q}^{n+3+q-l} y_r {r \choose q} {{n+3-r} \choose {l-q}} a^{r-q} \hat m^q &  \forall l \in [n+4] \cup \{0\}\\
	         && z_1 = R\mu - C - z_0 + z_{n+3}, \; z_2 = -R\mu - z_{n+3}, &\\
	         && z_3, \cdots, z_n = 0, \;  z_{n+1} = - R\mu + C(n+1) - z_0, &\\
	         &&  z_{n+2} = R\mu - C n + z_0 - z_{n+3} \; &\\ 
	         && z_{0} = -(\theta_1-\theta_2) \hat m +\gamma , z_{n+3}= \theta_1-\theta_2 + \theta_3-\theta_4 \; \\ 
	         && \displaystyle \sum_{i + j = 2l - 1} x'_{ij} = 0 & \forall l \in [n+4]\\
	         && \displaystyle \sum_{i + j = 2l} x'_{ij} = \sum_{q=0}^{l} \sum_{r=q}^{n+3+q-l} y'_r {r \choose q} {{n+3-r} \choose {l-q}} \hat m^{r-q} b^q & \forall l \in [n+4] \cup \{0\}\\
	    \end{array}
	\end{equation*}
	\end{thm}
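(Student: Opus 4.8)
The plan is to mirror the dualize-then-discretize argument behind Theorem~\ref{thm:MAD_so}, now tracking the two-sided moment bounds of $\mathcal P'_N$. First I would express $\overline Z_s(n)$ as the semi-infinite moment problem
\begin{equation*}
\begin{array}{cl}
\displaystyle \inf_{\nu \in \mathcal M_+} & \displaystyle \int_{\Xi} f_n(\rho)\, \nu(\mathrm d\rho) \\
\st & \displaystyle d_l \leq \int_{\Xi} |\rho - \hat m|\, \nu(\mathrm d\rho) \leq d_u, \\
    & \displaystyle m_l \leq \int_{\Xi} \rho\, \nu(\mathrm d\rho) \leq m_u, \\
    & \displaystyle \int_{\Xi} \nu(\mathrm d\rho) = 1,
\end{array}
\end{equation*}
in which the absolute deviation is centered at the fixed empirical mean $\hat m$ (the reading dictated by the $\hat m$ appearing in the statement), so that every constraint is linear in $\nu$.

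Next I would form the Lagrangian dual. Assigning the nonnegative multipliers $\theta_1,\theta_2$ to the lower and upper MAD bounds, $\theta_3,\theta_4$ to the lower and upper mean bounds, and the free multiplier $\gamma$ to the normalization, a routine computation gives the dual objective $\gamma+\theta_1 d_l-\theta_2 d_u+\theta_3 m_l-\theta_4 m_u$ together with the single semi-infinite constraint $(\theta_1-\theta_2)|\rho-\hat m|+(\theta_3-\theta_4)\rho+\gamma\leq f_n(\rho)$ for all $\rho\in[a,b]$. Writing $\alpha=\theta_1-\theta_2$ and $\beta=\theta_3-\theta_4$, this is formally identical to the dual in Theorem~\ref{thm:MAD_so} with $m$ replaced by $\hat m$, which is precisely why the same coefficient vectors $y,z$ reappear in the statement.

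To justify that the dual value equals $\overline Z_s(n)$ I would invoke strong duality exactly as in Lemma~\ref{lem:so_3points}: under the standing assumption that the estimators are strictly feasible, $\mathcal P'_N$ contains a distribution whose mean and MAD lie in the interiors of $[m_l,m_u]$ and $[d_l,d_u]$, so the prescribed moments sit in the interior of the achievable-moment set, Slater's condition holds, and conic duality yields a zero gap with dual attainment. The two-sided (inequality) form of $\mathcal P'_N$ only makes this constraint qualification easier to verify than in the equality-constrained primitive set.

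Finally I would discretize the semi-infinite constraint. Splitting at $\rho=\hat m$ into $|\rho-\hat m|=\hat m-\rho$ on $[a,\hat m]$ and $|\rho-\hat m|=\rho-\hat m$ on $[\hat m,b]$, substituting $f_n$ from~\eqref{eq:social_rate}, and clearing the denominator $1-\rho^{n+1}$ produces two univariate polynomial inequalities of degree $n+3$ whose coefficients are exactly the displayed expressions (e.g.\ $y_0=(\theta_1-\theta_2)\hat m+\gamma$ and $y_{n+3}=-\theta_1+\theta_2+\theta_3-\theta_4$). Applying Lemma~\ref{poly_lem_so1} with $k=n+3$ to each piece converts each polynomial nonnegativity condition on an interval into the stated linear system in the entries of a positive semidefinite matrix $X$ (respectively $X'$); since this characterization is exact for univariate polynomials, the SDP value equals the dual value, hence equals $\overline Z_s(n)$. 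I expect the only genuinely delicate step to be the strong-duality argument, since the primal is infinite-dimensional and the Slater-type constraint qualification must be checked rather than assumed; the remaining algebra is the same bookkeeping already performed for Theorem~\ref{thm:MAD_so}, now carrying two signed multipliers per moment.
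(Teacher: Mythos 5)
Your proposal follows essentially the same route as the paper's own proof: rewrite $\overline Z_s(n)$ as a moment problem with the deviation centered at the fixed $\hat m$ and two-sided mean/MAD bounds, dualize with multipliers $(\theta_1,\theta_2,\theta_3,\theta_4,\gamma)$ to obtain the semi-infinite constraint $(\theta_1-\theta_2)|\rho-\hat m|+(\theta_3-\theta_4)\rho+\gamma\leq f_n(\rho)$, split at $\hat m$, clear denominators, and apply Lemma~\ref{poly_lem_so1} with $k=n+3$. Your explicit Slater-type justification of strong duality is a detail the paper omits (it simply asserts the dual), but it does not change the argument, which is otherwise identical.
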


	Note that when $d_l=d_u$ and $m_l=m_u$, setting $\alpha=\theta_1-\theta_2$ and $\beta=\theta_3 -\theta_4$ recovers the dual problem \eqref{eq:so_mean_abs_dual} in view of the primitive MAD ambiguity set, which indicates the case when we have absolute trust on the mean and MAD estimators. 
	
	The next theorem presents the reformulation of the worst-case expected revenue rate.

	\begin{thm}\label{thm:data-driven-revenue}
	For any $n \geq 1$, the worst-case expected revenue rate $\overline{Z}_r(n)$ is equal to the optimal value of the following linear problem:
	\begin{equation*}
	\begin{array}{ccll}
	&\displaystyle\sup_{\theta \in \RR^4_+,\gamma \in \RR}&\displaystyle \gamma+\theta_1 d_l - \theta_2 d_u + \theta_3 m_l - \theta_4 m_u\\
	&\st& \displaystyle (\theta_1 - \theta_2)|a-\hat m| +(\theta_3-\theta_4) a + \gamma \leq r_n(a)\\
	&& \displaystyle (\theta_3-\theta_4) \hat m + \gamma \leq r_n(\hat m)\\
	&& \displaystyle (\theta_1 - \theta_2)|b-\hat m| +(\theta_3-\theta_4) b + \gamma \leq r_n(b).
	\end{array} 
	\end{equation*} 
	\end{thm}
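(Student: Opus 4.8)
The plan is to mirror the primal--dual route already used for the primitive MAD ambiguity set, but now carrying four inequality multipliers in place of two equality multipliers. First I would express $\overline{Z}_r(n)$ as the moment problem (with the absolute deviation measured about the fixed empirical mean $\hat m$)
\begin{equation*}
\begin{array}{ccll}
\overline{Z}_r(n)=&\displaystyle\inf_{\nu\in\mathcal M_+}&\displaystyle\int_{[a,b]} r_n(\rho)\,\nu(\mathrm{d}\rho)\\
&\st&\displaystyle d_l\leq\int_{[a,b]}|\rho-\hat m|\,\nu(\mathrm{d}\rho)\leq d_u\\
&&\displaystyle m_l\leq\int_{[a,b]}\rho\,\nu(\mathrm{d}\rho)\leq m_u\\
&&\displaystyle\int_{[a,b]}\nu(\mathrm{d}\rho)=1,
\end{array}
\end{equation*}
and assign nonnegative multipliers $\theta_1,\theta_2$ to the two MAD bounds, $\theta_3,\theta_4$ to the two mean bounds, and a free multiplier $\gamma$ to the normalization. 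Forming the Lagrangian and interchanging infimum and integral, the coefficient of $\nu$ must be pointwise nonnegative for the inner infimum to exceed $-\infty$; collecting terms shows this is exactly the semi-infinite condition
\[(\theta_1-\theta_2)|\rho-\hat m|+(\theta_3-\theta_4)\rho+\gamma\leq r_n(\rho)\quad\forall\rho\in[a,b],\]
while the surviving objective is $\gamma+\theta_1 d_l-\theta_2 d_u+\theta_3 m_l-\theta_4 m_u$, which is precisely the stated objective.

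The crux is then collapsing this semi-infinite constraint to the three displayed inequalities. Writing $F(\rho):=(\theta_1-\theta_2)|\rho-\hat m|+(\theta_3-\theta_4)\rho+\gamma$, observe that $F$ is piecewise affine with a single kink at $\hat m$, hence affine on each of $[a,\hat m]$ and $[\hat m,b]$ irrespective of the sign of $\theta_1-\theta_2$. By Lemma \ref{lem:rm_concave} the revenue rate $r_n$ is concave, so $r_n-F$ is concave on each piece. A concave function lies above the chord through its endpoints, so its minimum over a closed interval is attained at an endpoint; consequently $r_n-F\geq0$ on $[a,\hat m]$ is equivalent to nonnegativity at $\rho=a$ and $\rho=\hat m$, and on $[\hat m,b]$ to nonnegativity at $\rho=\hat m$ and $\rho=b$. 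Taking the union, the semi-infinite constraint reduces to $F(a)\leq r_n(a)$, $F(\hat m)\leq r_n(\hat m)$, and $F(b)\leq r_n(b)$; since $|\hat m-\hat m|=0$ the middle constraint drops its MAD term, reproducing the three inequalities of the theorem and yielding a finite linear program.

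The step I expect to demand the most care is justifying strong duality, namely that this semi-infinite dual attains the primal value with no gap. I would invoke the standard constraint-qualification argument for moment problems on a compact support: under the maintained assumptions guaranteeing that $\mathcal P'_N$ admits a distribution whose mean lies strictly inside $(m_l,m_u)$ and whose MAD lies strictly inside $(d_l,d_u)$, the primal moment vector sits in the interior of the relevant moment cone, so Slater's condition holds and zero gap together with dual attainment follow. Everything else---assembling the Lagrangian, and the endpoint-minimization argument for a concave-minus-affine function---is routine once the concavity from Lemma \ref{lem:rm_concave} is available, so the only genuine subtleties are verifying this interiority condition and checking that the sign conventions on the four multipliers $\theta_1,\ldots,\theta_4$ propagate correctly into the objective.
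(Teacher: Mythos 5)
Your proposal is correct and follows essentially the same route as the paper: dualize the inequality-constrained moment problem to obtain the semi-infinite dual with multipliers $\theta \in \RR^4_+$, $\gamma \in \RR$, and then use the concavity of $r_n$ (Lemma \ref{lem:rm_concave}) together with the piecewise-affine structure of the left-hand side to collapse the semi-infinite constraint to the three points $a$, $\hat m$, $b$. Your extra care on strong duality (Slater-type interiority of the moment intervals, which holds here since the empirical distribution has mean $\hat m$ and MAD $\hat d$ strictly inside the confidence intervals) is a detail the paper's proof leaves implicit, not a departure in approach.
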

	
	Theorems \ref{thm:data-driven-social} and \ref{thm:data-driven-revenue} provide tractable reformulations for the social and revenue optimization problems. An advantage of the data-driven model is that it can offer attractive finite-sample guarantees. 
	Compared with the original MAD ambiguity set that imposes unique mean and MAD, the data-driven MAD ambiguity set allows these parameters to vary within the confidence intervals. In this way, we can assure that the set contains the true underlying distribution with a high probability, which immediately generates out-of-sample performance guarantees for the solution.


	\begin{thm}\label{thm:finite-sample}
	Let $\{\hat{\rho}_i\}_{i \in [N]}$ be a set of $N$ samples generated independently at random from $\mathbb P^\star$ and~$v^\star$ denote the optimal value of \eqref{eq:ddmad}. By setting 
	\begin{align}\label{eq:confidence_interval}
	    \nonumber&\mathcal T=\left[\hat m - (b-a) \sqrt{\frac{\log 4/\delta}{2N}}, \hat m + (b-a) \sqrt{\frac{\log 4/\delta}{2N}}\;\right]\\
	    &\mathcal D=\left[\hat d - (b-a) \sqrt{\frac{9\log 4/\delta}{2N}}, \hat d + (b-a) \sqrt{\frac{9\log 4/\delta}{2N}}\;\right],
	\end{align}
    we have
    \[\textup{Prob} \left(v^\star \leq  \mathbb E_{\mathbb P^\star} [c_{\hat n}(\rho)] \right) \geq 1-\delta ,
    \]
    where $\hat n$ is the optimal threshold obtained from \eqref{eq:ddmad}.
	\end{thm}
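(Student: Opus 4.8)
The plan is to reduce the finite-sample guarantee to a \emph{coverage} statement: if the true distribution $\mathbb P^\star$ happens to lie in the data-driven ambiguity set $\mathcal P'_N$, then the optimal value $v^\star$ is automatically a valid lower bound on $\mathbb E_{\mathbb P^\star}[c_{\hat n}(\rhot)]$. Indeed, because $\hat n \in \argmax_{n} \overline{Z}(n)$, we have $v^\star = \overline{Z}(\hat n) = \inf_{\mathbb P \in \mathcal P'_N}\mathbb E_{\mathbb P}[c_{\hat n}(\rhot)]$ by \eqref{eq:ddmad} and \eqref{eq:ddmad_inf}. Whenever $\mathbb P^\star \in \mathcal P'_N$, the distribution $\mathbb P^\star$ is feasible for this infimum, so $v^\star \le \mathbb E_{\mathbb P^\star}[c_{\hat n}(\rhot)]$ holds deterministically on that event, regardless of the data-dependent realization of $\hat n$. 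Hence $\{\mathbb P^\star \in \mathcal P'_N\} \subseteq \{v^\star \le \mathbb E_{\mathbb P^\star}[c_{\hat n}(\rho)]\}$, and it suffices to prove $\textup{Prob}(\mathbb P^\star \in \mathcal P'_N) \ge 1-\delta$.

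By the definition of $\mathcal P'_N$ in \eqref{eq:data-driven-ambiguityset}, membership $\mathbb P^\star \in \mathcal P'_N$ amounts to the two events $\{m^\star \in \mathcal T\}$ and $\{d^\star \in \mathcal D\}$, where $m^\star = \mathbb E_{\mathbb P^\star}[\rhot]$ and $d^\star = \mathbb E_{\mathbb P^\star}[\,|\rhot - m^\star|\,]$. I would establish each at confidence level $1-\delta/2$ and finish with a union bound. The mean event is routine: the samples $\hat\rho_i \in [a,b]$ are i.i.d.\ and bounded, so Hoeffding's inequality gives $\textup{Prob}(|\hat m - m^\star| > t) \le 2\exp(-2Nt^2/(b-a)^2)$, and the choice $t = \epsilon_m := (b-a)\sqrt{\log(4/\delta)/(2N)}$ makes the right-hand side equal to $\delta/2$; since $\epsilon_m$ is exactly the half-width of $\mathcal T$, we get $\textup{Prob}(m^\star \in \mathcal T) \ge 1-\delta/2$.

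The crux is the confidence interval for the MAD, because the empirical MAD $\hat d = \tfrac1N\sum_i |\hat\rho_i - \hat m|$ is biased and its summands are mutually dependent through $\hat m$, so neither a direct Hoeffding bound nor centering at $d^\star$ applies out of the box. I would split $|\hat d - d^\star| \le |\hat d - \mathbb E[\hat d]| + |\mathbb E[\hat d] - d^\star|$ and treat the two pieces separately. For the stochastic deviation I would invoke McDiarmid's bounded-difference inequality: replacing a single sample shifts $\hat m$ by at most $(b-a)/N$ and therefore perturbs $\hat d$ by at most $2(b-a)/N$, so McDiarmid yields $\textup{Prob}(|\hat d - \mathbb E[\hat d]| > 2\epsilon_m) \le \delta/2$. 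For the bias I would couple $\hat d$ to the infeasible but unbiased surrogate $\tilde d = \tfrac1N\sum_i |\hat\rho_i - m^\star|$; the reverse triangle inequality gives the pointwise estimate
\[
|\hat d - \tilde d| \;\le\; \tfrac1N \sum_i \big|\,|\hat\rho_i - \hat m| - |\hat\rho_i - m^\star|\,\big| \;\le\; |\hat m - m^\star|,
\]
and since $\mathbb E[\tilde d] = d^\star$, taking expectations and applying Popoviciu's variance bound gives the deterministic estimate $|\mathbb E[\hat d] - d^\star| \le \mathbb E|\hat m - m^\star| \le (b-a)/(2\sqrt N) \le \epsilon_m$. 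Adding the two contributions, with probability at least $1-\delta/2$ we obtain $|\hat d - d^\star| \le 3\epsilon_m = (b-a)\sqrt{9\log(4/\delta)/(2N)}$, which is exactly the half-width of $\mathcal D$; this is the promised distribution-free MAD interval, and the factor $9 = (2+1)^2$ records the McDiarmid half-width $2\epsilon_m$ plus the bias term $\epsilon_m$. A union bound over the mean and MAD events then gives $\textup{Prob}(\mathbb P^\star \in \mathcal P'_N) \ge 1-\delta$, completing the argument.

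The main obstacle is the MAD interval of the third paragraph. Unlike the empirical mean, $\hat d$ is neither a sum of independent terms nor an unbiased estimator, so the two standard ingredients must be supplied separately: McDiarmid absorbs the dependence (through the $2(b-a)/N$ bounded-difference constant), while the reverse-triangle coupling to $\tilde d$ converts the bias into the already-controlled mean deviation $|\hat m - m^\star|$. The remaining work is bookkeeping of constants so that the aggregated failure probability is exactly $\delta$, which is what fixes the $\log(4/\delta)$ scaling and the factor $9$ in \eqref{eq:confidence_interval}.
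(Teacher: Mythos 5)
Your proof is correct, and its outer skeleton---the coverage reduction $\{\mathbb P^\star \in \mathcal P'_N\} \subseteq \{v^\star \le \mathbb E_{\mathbb P^\star}[c_{\hat n}(\rho)]\}$, a Hoeffding interval for the mean at level $\delta/2$, a separate interval for the MAD at level $\delta/2$, and a final union bound---is the same as the paper's; the genuine difference is how the MAD interval is obtained. The paper stays entirely within Hoeffding's inequality: by triangle and reverse-triangle manipulations it establishes the pointwise bound $|\hat d - d^\star| \le \bigl|\tfrac{1}{N}\sum_i|\hat\rho_i - m^\star| - d^\star\bigr| + 2|\hat m - m^\star|$, in which both terms are deviations of averages of i.i.d.\ bounded variables, and then applies Hoeffding to each with $r_1=r_2=r/3$, so the factor $3$ (hence the $9$) arises from the split $1+2$. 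You instead decompose into fluctuation plus bias, $|\hat d - d^\star| \le |\hat d - \mathbb E[\hat d]| + |\mathbb E[\hat d] - d^\star|$, control the fluctuation by McDiarmid's inequality with bounded-difference constant $2(b-a)/N$ (which correctly absorbs the dependence of the summands on $\hat m$; your constant and the resulting $\delta/2$ tail at $t=2\epsilon_m$ check out), and make the bias term fully deterministic via the surrogate coupling and Popoviciu's variance bound, so your factor $3$ arises from the split $2+1$. Both routes land on exactly the same intervals \eqref{eq:confidence_interval}. What your route buys is a cleaner variance/bias separation and one fewer stochastic event to control; what it costs is a heavier concentration tool than the paper needs, plus one side condition you should state explicitly: the absorption $(b-a)/(2\sqrt N) \le \epsilon_m$ holds if and only if $\log(4/\delta) \ge 1/2$, which is true for every $\delta \in (0,1)$ but is required for $\mathcal D$ as written to contain $d^\star$, so it deserves a line of justification.
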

	
	


\begin{proof}
    The error of the empirical MAD estimate is given by
    \begin{align*}
    &\nonumber\left| \frac{1}{N} \displaystyle \sum_{i=1}^N |\hat \rho_i-\hat m| - \mathbb E \left[|\tilde \rho -\hat m |\right] \right| \\
    =& \max \left\{ \frac{1}{N} \displaystyle \sum_{i=1}^N |\hat \rho_i-\hat m| - \mathbb E \left[|\tilde \rho -\hat m |\right], -\frac{1}{N} \displaystyle \sum_{i=1}^N |\hat \rho_i-\hat m| + \mathbb E \left[|\tilde \rho -\hat m |\right] \right\}.
    \end{align*}
    We upper bound both terms inside the max operator. The first term is bounded by
    \begin{align*}
    \nonumber\frac{1}{N} \displaystyle \sum_{i=1}^N |\hat \rho_i-\hat m| - \mathbb E \left[|\tilde \rho -\hat m |\right] \leq \;& \frac{1}{N} \displaystyle \sum_{i=1}^N |\hat \rho_i-\hat m|- \mathbb E \left[|\left|\rhot - m| - |\hat m -m| \right| \right]\\
    \leq \; &\frac{1}{N} \displaystyle \sum_{i=1}^N |\hat \rho_i-\hat m|- \mathbb E \left[|\rhot - m| - |\hat m -m|  \right] \\ 
    \leq \; & \left|\frac{1}{N} \displaystyle \sum_{i=1}^N |\hat \rho_i-\hat m| - \mathbb E \left[|\tilde \rho - m |\right] \right|+ |\hat m -m|,
    \end{align*}
    where the second inequality follows from reverse triangle inequality. Meanwhile, the second term is bounded by
    \begin{align*}
        -\frac{1}{N} \displaystyle \sum_{i=1}^N |\hat \rho_i-\hat m| + \mathbb E \left[|\tilde \rho -\hat m |\right]  \leq \;& -\frac{1}{N} \displaystyle \sum_{i=1}^N |\hat \rho_i-\hat m| + \mathbb E \left[|\tilde \rho -m|+|\hat m - m|\right]\\
        \leq \;& \left|\frac{1}{N} \displaystyle \sum_{i=1}^N |\hat \rho_i-\hat m| - \mathbb E \left[|\tilde \rho - m |\right] \right|+ |\hat m -m|.
    \end{align*}
    Since both of these two terms have the same upper bound, we have
    \begin{equation*}
        \left| \frac{1}{N} \displaystyle \sum_{i=1}^N |\hat \rho_i-\hat m| - \mathbb E \left[|\tilde \rho -\hat m |\right] \right| \leq \;\left|\frac{1}{N} \displaystyle \sum_{i=1}^N |\hat \rho_i-\hat m| - \mathbb E \left[|\tilde \rho - m |\right] \right|+ |\hat m -m|.
    \end{equation*}
    As $\mathbb E[\hat m]= \mathbb E[m]$ is an unbiased estimator, we can invoke the Hoeffding's inequality to derive a confidence interval for the second term. However, the empirical MAD is biased, i.e., $\mathbb E [\frac{1}{N} \sum_{i=1}^N |\rho_i-\hat m|] \neq \mathbb E \left[|\rho -m |\right]$---making the Hoeffding's inequality not applicable. To derive a confidence interval for this term, we rewrite it as
    \begin{align*}\label{eq:abs_max}
    &\nonumber\left| \frac{1}{N} \displaystyle \sum_{i=1}^N |\hat \rho_i-\hat m| - \mathbb E \left[|\tilde \rho -m |\right] \right| \\
    =& \max \left\{ \frac{1}{N} \displaystyle \sum_{i=1}^N |\hat \rho_i-\hat m| - \mathbb E \left[|\tilde \rho -m |\right], -\frac{1}{N} \displaystyle \sum_{i=1}^N |\hat \rho_i-\hat m| + \mathbb E \left[|\tilde \rho -m |\right] \right\}.
    \end{align*}
    We further upper bound the two terms inside the max operator. For the first term, we have
    \begin{align*}
    \nonumber\frac{1}{N} \displaystyle \sum_{i=1}^N |\hat \rho_i-\hat m| - \mathbb E \left[|\tilde \rho -m |\right] \leq \;& \nonumber\frac{1}{N} \displaystyle \sum_{i=1}^N |\hat \rho_i-m| + |m-\hat m| - \mathbb E \left[|\tilde \rho -m |\right]\\
    \leq \; & \left| \frac{1}{N} \displaystyle \sum_{i=1}^N |\hat \rho_i-m| - \mathbb E \left[|\tilde \rho -m |\right] \right| + |m-\hat m|. 
    \end{align*}
    For the second term, applying reverse triangle inequality yields
    \begin{align*}
     \nonumber \mathbb E \left[|\tilde \rho -m |\right] - \displaystyle \frac{1}{N}\sum_{i=1}^N |\hat \rho_i-\hat m| 
    \leq &\;  \nonumber  \mathbb E \left[|\tilde \rho -m |\right] - \frac{1}{N} \displaystyle \sum_{i=1}^N \left||\hat \rho_i-m| - |\hat m - m|\right|\\
    \leq & \; \nonumber \mathbb E \left[|\tilde \rho -m |\right] - \displaystyle \frac{1}{N} \sum_{i=1}^N |\hat \rho_i-m|  + |\hat m - m|\\    
    \leq & \;\left|\mathbb E \left[|\tilde \rho -m |\right] - \displaystyle \frac{1}{N} \sum_{i=1}^N |\hat \rho_i-m| \right| + |\hat m - m|.
    \end{align*}
Thus, we have 
    \begin{equation*}
       \left| \frac{1}{N} \displaystyle \sum_{i=1}^N |\hat \rho_i-\hat m| - \mathbb E \left[|\tilde \rho -m |\right] \right| \leq  \left|\mathbb E \left[|\tilde \rho -m |\right] - \displaystyle \frac{1}{N} \sum_{i=1}^N |\hat \rho_i-m| \right| + 2|\hat m - m|.
    \end{equation*}
    Since both of these two terms are unbiased, we can apply the Hoeffding's inequality and obtain
    \begin{align}
        &\nonumber\textup{Prob} \left(\left|\mathbb E \left[|\tilde \rho -m |\right] - \displaystyle \frac{1}{N} \sum_{i=1}^N |\hat \rho_i-m| \right| \geq r_1\right) \leq 2 \exp \left(-\frac{2Nr_1^2}{(b-a)^2}\right) \quad \textup{and}\\
        &\nonumber\label{eq:mean_hoeffding}\textup{Prob} \left(|\hat m - m| \geq r_2\right) \leq 2 \exp \left(-\frac{2Nr_2^2}{(b-a)^2}\right).
    \end{align}
    By applying the union bound and setting $r_1=r_2=r/3$, we arrive at the desired confidence intervals that the true mean $m$ and MAD $d$ satisfy
   \begin{equation*}
    \begin{array}{lll}
         \hat m - (b-a) \sqrt{\frac{\log 4/\delta}{2N}} \leq& m & \leq \hat m + (b-a) \sqrt{\frac{\log 4/\delta}{2N}}\\
         \hat d - (b-a) \sqrt{\frac{9\log 4/\delta}{2N}} \leq& d & \leq \hat d + (b-a) \sqrt{\frac{9\log 4/\delta}{2N}}
    \end{array}
    \end{equation*}
    with probability at least $1-\delta$. 
Therefore, by setting the confidence interval $\mathcal T$ and $\mathcal D$ as in \eqref{eq:confidence_interval}, we have
\[ \textup{Prob} \left(\mathcal P'_N \ni \mathbb P^\star \right) \geq 1-\delta,
\]where $\mathcal P'_N$ is the data-driven ambiguity set \eqref{eq:data-driven-ambiguityset} constructed by $N$ random  samples 
drawn from the underlying distribution $\mathbb P^\star$.
As $\nu^\star$ is defined by 
$\nu^\star \coloneqq \inf_{ \mathbb P \in \mathcal P'_N} \mathbb E_{\mathbb P} [c_n(\rhot)]$ and the probability of $\mathcal P'_N$ contains the true distribution~$\mathbb P^\star$ is greater than $1-\delta$, we have
\[\textup{Prob} \left(v^\star \leq  \mathbb E_{\mathbb P^\star} [c_{\hat n}(\rho)] \right) \geq 1-\delta ,
\]
which completes the proof.
\end{proof}
The theorem establishes  that, with judicious choices of the confidence interval lengths, the optimal value of the data-driven DRO model $v^\star$ provides  a high confidence lower bound on the expected benefit rate of the robust solution $\hat n$ under the true underlying distribution $\mathbb P^\star$. 

    
\begin{rem} An avid reader may be interested in employing the popular Wasserstein DRO model in the data-driven setting. Indeed, the model has been widely adopted because it can generate asymptotically consistent solutions and offer similarly attractive finite-sample guarantees. Unfortunately, 
the reformulation of this data-driven DRO model involves $\mathcal O(N)$ semidefinite constraints, which makes the problem computationally intensive. For readers who are interested in the use of Wasserstein ambiguity set, we provide a detailed discussion in Appendix B.
\end{rem}

\section{Numerical Experiment}\label{sec:numerical}

In this section we present the numerical experiments and examine the performance of different DRO policies.
All optimization problems are implemented in MATLAB  and solved by SDPT3~\cite{toh1999sdpt3} via the YALMIP interface~\cite{lofberg2004yalmip}. The experiments are run on a 2.2GHz Intel Core i7 CPU laptop with 8GB RAM.

We assess the out-of-sample performance of the data-driven policies for a social optimizer and a revenue maximizer through a fair out-of-sample experiment. 
We assume we have access to  $N$ independent samples $\{\hat{\rho}_i\}_{i \in [N]}$ of the traffic intensity drawn from the true underlying distribution~$\mathbb P^\star$, and we construct three ambiguity sets: an empirical MAD ambiguity set, a data-driven MAD (DD-MAD) ambiguity set and a Wasserstein ambiguity set.  
The empirical MAD ambiguity set is defined in \eqref{eq:ambiguity_set}, where we directly substitute the empirical mean and MAD  for $m$ and $d$, respectively. The DD-MAD ambiguity set is defined in \eqref{eq:ddmad} where, rather than carelessly plugging in the empirical estimators, we construct a confidence interval around the empirical mean and MAD. The Wasserstein ambiguity set~\cite{esfahani2017data,gao2016distributionally} is a popular data-driven ambiguity set. However, its complexity scales with the number of samples, making the problem computationally intensive with large sample sizes. We derive the reformulation of the Wasserstein model in Appendix \ref{sec:wasser}.
Once we constructed the ambiguity sets, we then proceed to compute the distributionally robust thresholds that maximize the worst-case expected benefit rate under these ambiguity sets. Finally, we compare the three solutions in a fair out-of-sample experiment relative to the  sample average approximation (SAA) method, which  n\"aively assumes that the empirical distribution generated from the $N$ samples is the true underlying distribution.

We conduct the out-of-sample trials for datasets containing $N = 2, 4, \dots, 10, 20, 40 \dots,100$ independent samples.   
We assume the arrival rate is generated by $\lambda=2 \tilde b$, where $\tilde b\sim Beta(0.1,0.5)$.
In each trial, we draw $N$ independent training samples and obtain $\{\hat{\rho}_i\}_{i \in [N]}$ from $\mathbb P^\star$. We then compute the optimal thresholds  $ \hat n_{d}$, $ \hat n_{dd}$ and $\hat n_{w}$ for the MAD, DD-MAD, and Wasserstein DRO models, respectively. We also compute the SAA threshold $\hat n_{SAA}$ by solving the sample average approximation model.  Based on the scaling rates derived in Theorem \ref{thm:finite-sample}  and \cite[Theorem 3.4]{esfahani2017data}, the size of the confidence intervals in \eqref{eq:ddmad} is set to be $C_1/\sqrt{N}$  and the Wasserstein radius is set to be $C_2/\sqrt{N}$, where $C_1$ and $C_2$ are chosen from the set $\{5,1,0.5,0.1,0.05,0.01 \}$ using a $\emph{k-fold cross validation}$ procedure. Specifically, we partition the in-sample data $\{\hat{\rho}_i\}_{i \in [N]}$ into $k=\min\{N,5\}$ folds and repeat the following procedure for each fold: the $i$-th fold is taken as a validation dataset and the remaining $k-1$ folds are merged to be a subtraining set. We repeat this process for each fold and choose the interval length that performs best in average.
The out-of-sample expected benefit/revenue rate $\mathbb E_{\mathbb P^\star}[c_{\hat n}(\rho)]$ for each of the strategies is then estimated  at high accuracy using 10,000 test samples from $\mathbb P^\star$.

\begin{figure}
	\centering
	\subfigure[Expected Value - Social Optimizer]{\includegraphics[width=.49\textwidth]{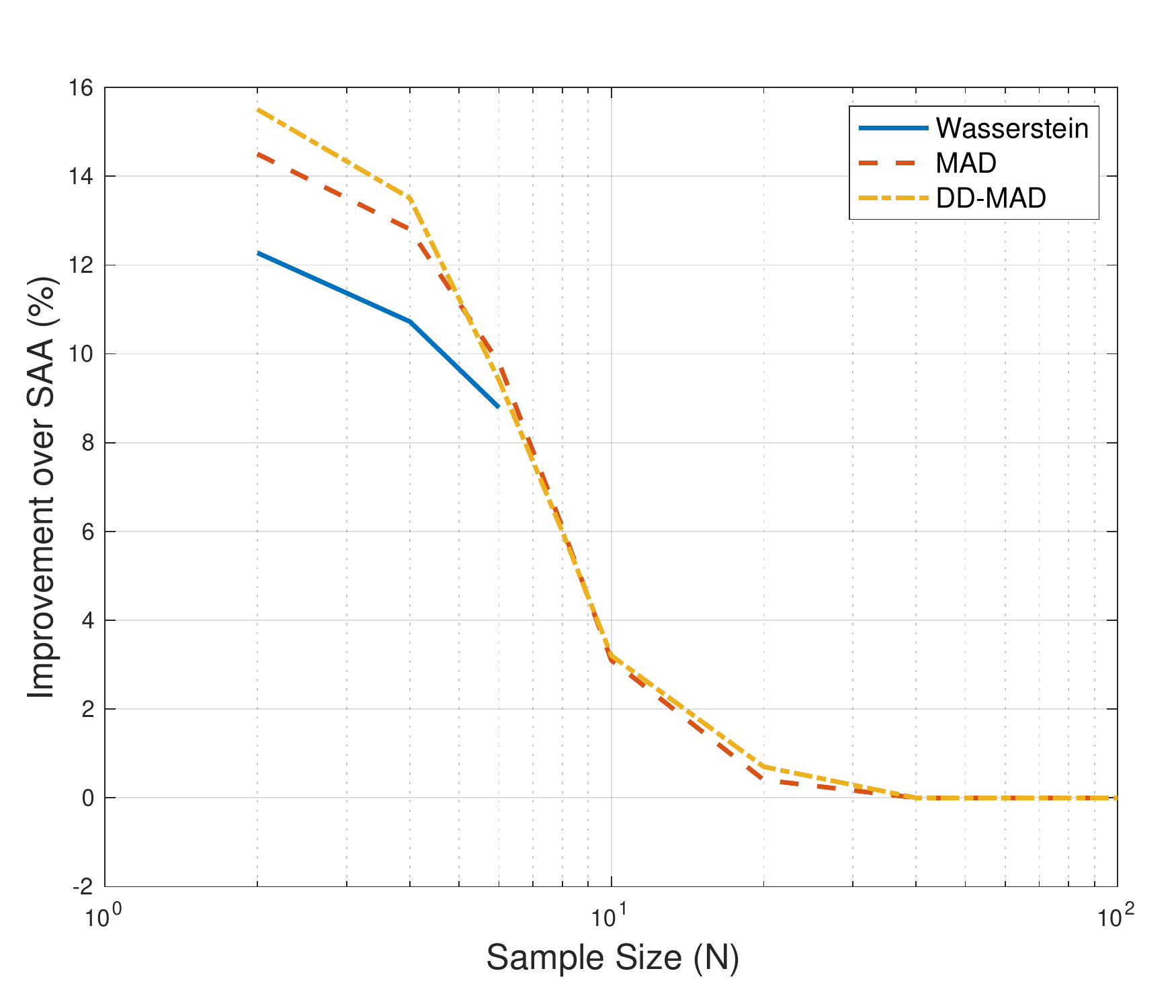}\label{fig:so_oss_impro}}
	\subfigure[95th Percentile - Social Optimizer]{\includegraphics[width=.49\textwidth]{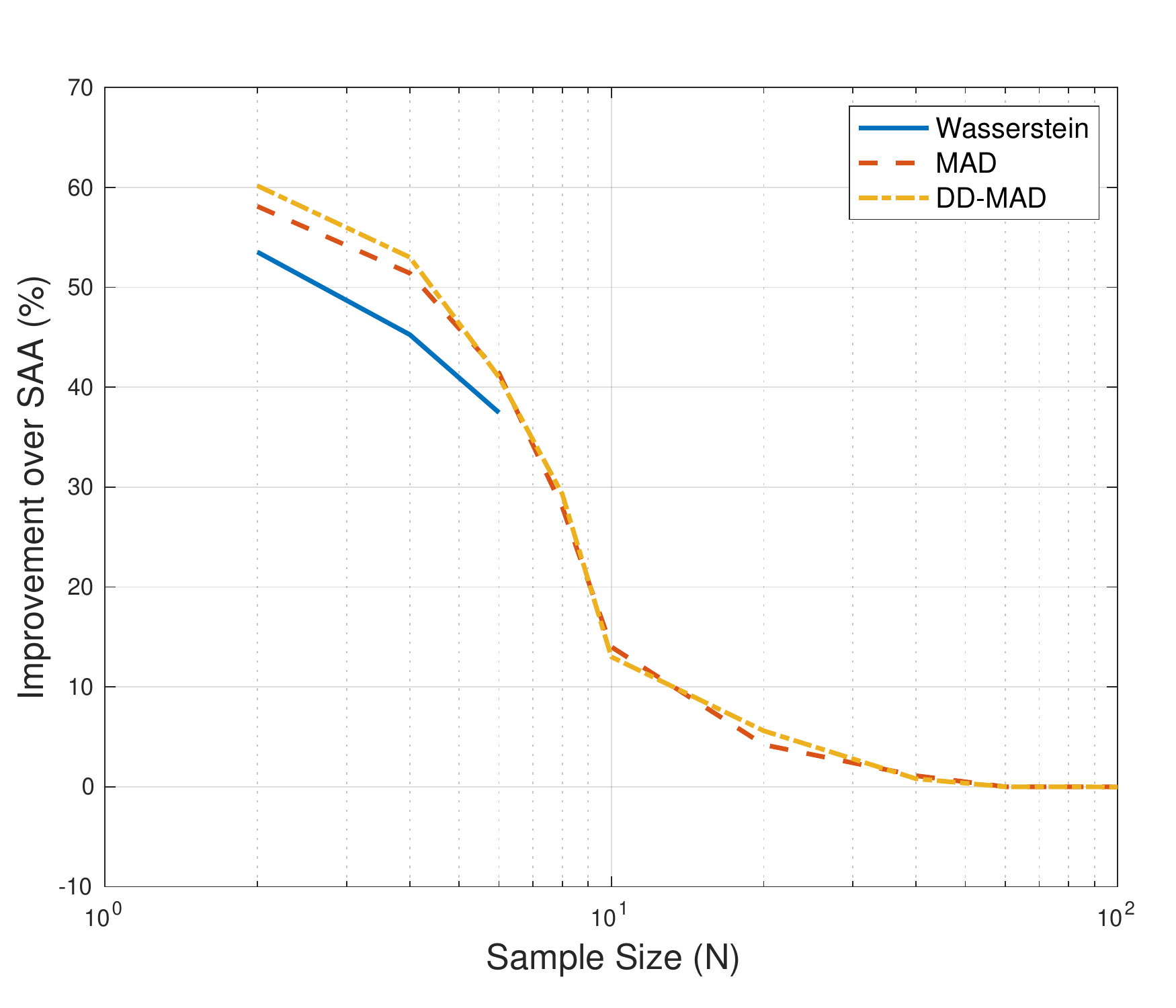}\label{fig:so_oss_impro}}
	\subfigure[Expected Value - Revenue Maximizer]{\includegraphics[width=.49\textwidth]{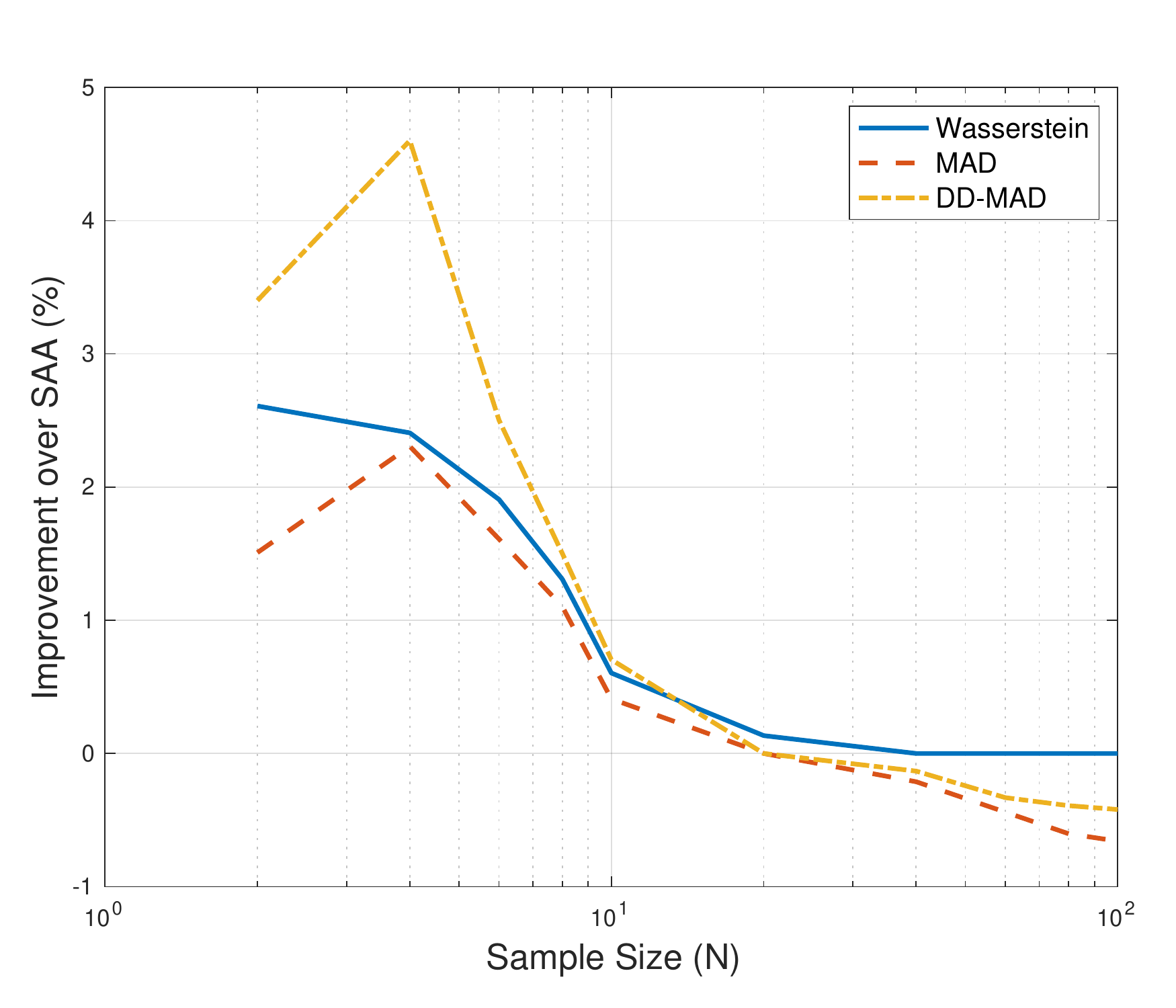}\label{fig:so_oss_impro}}
	\subfigure[95th Percentile - Revenue Maximizer]{\includegraphics[width=.49\textwidth]{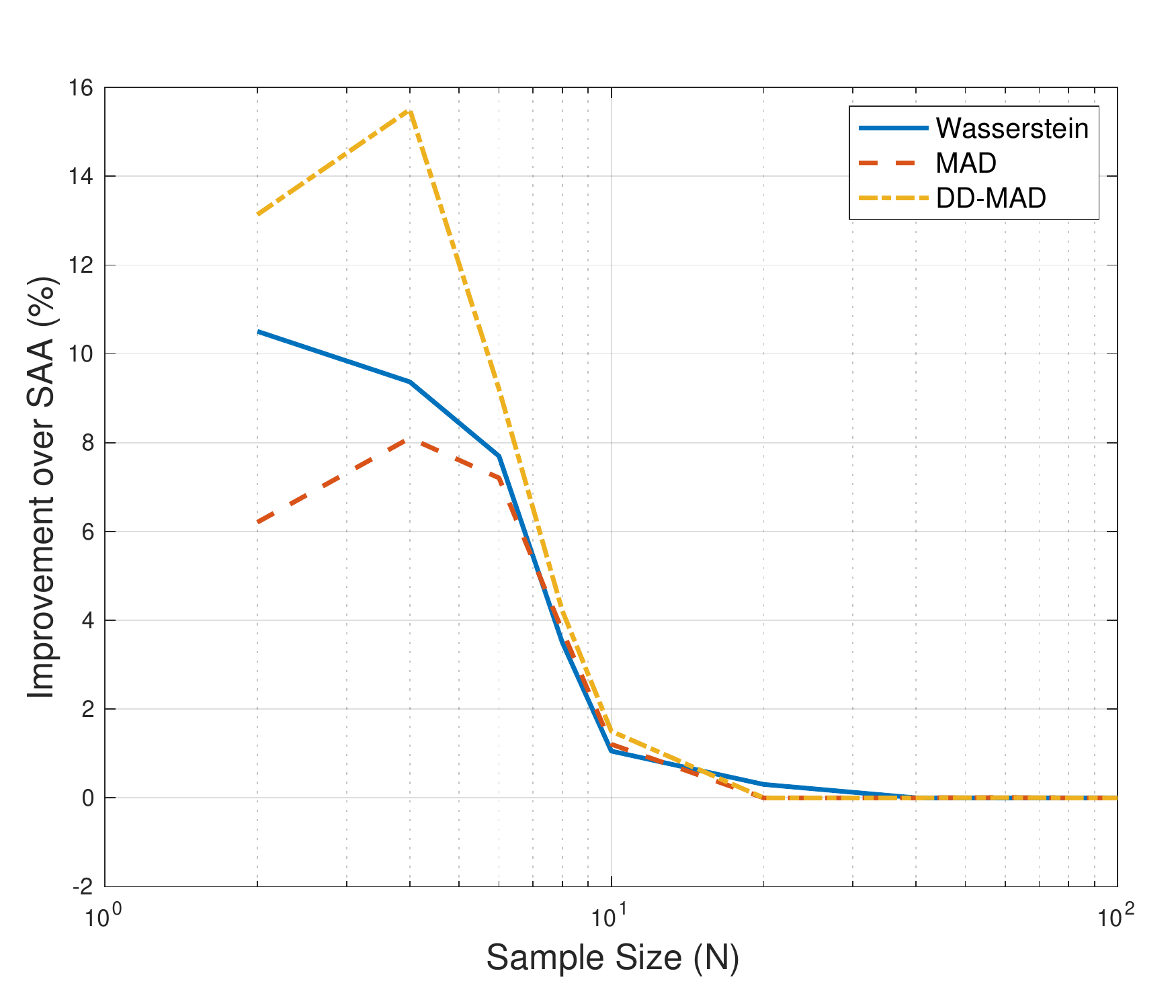}\label{fig:so_oss_impro}}
	\caption{Improvements of the DRO policies relative to the SAA policy in terms of the social optimizer and revenue maximizer respectively.}	
	\label{fig:improvement_so}
\end{figure}

Figure \ref{fig:improvement_so} depicts the out-of-sample performances of a social optimizer and a revenue optimizer under different DRO policies with $R=10$, $C=1$ and $\mu=1$. The expected values and $95$ percentiles are computed from $20$ independent trials. The $y$-axis represents the improvements of the DRO policies relative to the SAA policy, while the $x$-axis denotes the sample size. In the social optimization problem, the curve of the Wasserstein model terminates at $N=6$ since the solver fails to converge when the sample size reaches $8$. 
Meanwhile, we observe that the Wasserstein model dominates the SAA model uniformly across all sample sizes in the revenue maximization problem, while the MAD and DD-MAD models outperform the SAA model in moderate sample sizes. This is because the Wasserstein ambiguity set converges to the true distribution as the number of samples grows, whereas the moment ambiguity sets fails to converge to the true distribution.
We also find that the MAD model performs poorly when the sample size is small because the empirical MAD constitutes a biased estimator with significant estimation errors. On the other hand, the DD-MAD model---by optimizing in view of the most adverse mean and MAD---mitigates the detrimental effects of poor empirical estimations and generates high-quality policies. Finally, we observe that the advantages of the DRO policies relative to the SAA method are generally more substantial for the $95$th percentiles. This underlines a major advantage of incorporating the DRO scheme, as it reduces the likelihood of realizing extremely poor performance in the out-of-sample test.


\begin{table}[]
    \centering
    \begin{tabular}{|l|p{2.5cm}|r|r|r|r|r|r|}
         \multicolumn{2}{c}{}& \multicolumn{6}{c}{Sample size $N$ } \\
         \hline
         &Model Name&\multicolumn{1}{r}{2}&\multicolumn{1}{r}{5}&\multicolumn{1}{r}{10}&\multicolumn{1}{r}{25}&\multicolumn{1}{r}{50}&100\\
         \hline
         &MAD& \multicolumn{1}{r}{24.72}&\multicolumn{1}{r}{21.38}&\multicolumn{1}{r}{31.42}&\multicolumn{1}{r}{26.65}&\multicolumn{1}{r}{24.30}&29.84\\
         Social&DD-MAD&\multicolumn{1}{r}{33.58}&\multicolumn{1}{r}{27.49}&\multicolumn{1}{r}{22.75}&\multicolumn{1}{r}{32.94}&\multicolumn{1}{r}{27.61}&28.71\\
         &Wasserstein&\multicolumn{1}{r}{38.19}&\multicolumn{1}{r}{88.36}&\multicolumn{1}{r}{-}&\multicolumn{1}{r}{-}&\multicolumn{1}{r}{-}&-\\
         \hline
          &MAD& \multicolumn{1}{r}{0.05}&\multicolumn{1}{r}{0.03}&\multicolumn{1}{r}{0.04}&\multicolumn{1}{r}{0.05}&\multicolumn{1}{r}{0.07}&0.06\\
         Revenue&DD-MAD&\multicolumn{1}{r}{1.54}&\multicolumn{1}{r}{1.79}&\multicolumn{1}{r}{1.42}&\multicolumn{1}{r}{1.81}&\multicolumn{1}{r}{1.65}&1.59\\
         &Wasserstein&\multicolumn{1}{r}{1.69}&\multicolumn{1}{r}{1.92}&\multicolumn{1}{r}{2.41}&\multicolumn{1}{r}{2.63}&\multicolumn{1}{r}{2.95}&4.68\\
         \hline
    \end{tabular}
    \caption{Running time (in seconds) of different methods. The '-' symbol indicates that the model fails to converge in the maximal iteration/time.}
    \label{tab:time}
\end{table}

Table \ref{tab:time} reports the computation time of different models with the sample size varying from $2$ to $100$. We set the length of the confidence intervals and the radius of the Wasserstein ball to $0.1$. In this  experiment, the running time limit of SDPT3 is set to $600$ seconds and the number of iterations is set to $5000$. All computational times are averaged over $10$ trials.

The results in Table \ref{tab:time} indicate that the computational times of the MAD and DD-MAD models are size-invariant in the social optimization problem because the number of constraints is independent of the number of samples. The Wasserstein model is applicable to small-size problems. However, it encounters computational difficulties for moderate-size problem instances: when the sample size reaches $10$, the model diverges or fails to converge within the time/iteration limit. The MAD model is extremely efficient for the revenue maximization problem as it admits a closed-form solution. The DD-MAD model is still size-invariant, and its linear programming reformulation yields a much shorter computational time than the SDP reformulation in the social optimization problem. In addition, the Wasserstein model can be solved efficiently in the revenue maximization problem even for large sample sizes, benefiting from the linear programming reformulation.

In summary, the experimental results highlight the substantial advantage of employing the proposed DD-MAD distributionally robust model, particularly when limited number of observations is available to infer the underlying data-generating distribution. It yields attractive out-of-sample performances while can be solved very efficiently.

\section{Conclusion}
This paper developed an extension of Naor's strategic queue model with uncertain arrival rates using the DRO framework. We showed that under the DRO setting, the optimal threshold of an individual optimizer coincides with Naor's original result, and there exist optimal thresholds of the social and revenue optimizers not larger than the optimal individual threshold. We then proved that the revenue rate function is concave, while the social benefit rate function is concave or unimodal under some mild conditions. These nice properties lead to a closed-form solution for the revenue maximization problem and an analytical solution for the social optimization problem. 

Next, we considered the data-driven optimization setting, where decision makers only have access to limited historical samples. We proposed a data-driven MAD model by introducing an extra layer of robustness to the primitive MAD ambiguity set. As the model mitigates the detrimental estimation errors from the empirical mean and MAD, it achieves attractive performance in out-of-sample tests. We derived an SDP reformulation for the social optimization problem and a linear programming reformulation for the revenue maximization problem. We further established finite-sample guarantees for the data-driven model, which provide valuable guidance for choosing the robustness parameters in practice. Our experimental results show that a system manager who disregards ambiguities in the distribution on the arrival rate, as well as errors from the empirical parameter estimations, may incur large out-of-sample costs.
Future work includes extending the DRO scheme to the unobservable strategic queues, where newly arrived customers cannot observe the current length of the queue system.


\paragraph{Acknowledgements}
This research was supported by the National Science Foundation grant no.~$1752125$. 
	
\bibliographystyle{abbrv}
\bibliography{myreference.bib}    

\begin{thebibliography}{10}

\bibitem{abu2018confidence}
M.~O.~A. Abu-Shawiesh, S.~Banik, and B.~Kibria.
\newblock Confidence intervals based on absolute deviation for population mean
  of a positively skewed distribution.
\newblock {\em International Journal of Computational and Theoretical
  Statistics}, 5(01):1--13, 2018.

\bibitem{afeche2013bayesian}
P.~Af{\`e}che and B.~Ata.
\newblock Bayesian dynamic pricing in queueing systems with unknown delay cost
  characteristics.
\newblock {\em Manufacturing \& Service Operations Management}, 15(2):292--304,
  2013.

\bibitem{arachchige2019confidence}
C.~N. Arachchige and L.~A. Prendergast.
\newblock Confidence intervals for median absolute deviations.
\newblock {\em arXiv preprint arXiv:1910.00229}, 2019.

\bibitem{ardestani2016linearized}
A.~Ardestani-Jaafari and E.~Delage.
\newblock {\em Linearized robust counterparts of two-stage robust optimization
  problems with applications in operations management}.
\newblock GERAD, {\'E}cole des hautes {\'e}tudes commerciales, 2016.

\bibitem{ben2009robust}
A.~Ben-Tal, L.~El~Ghaoui, and A.~Nemirovski.
\newblock {\em Robust Optimization}, volume~28.
\newblock Princeton University Press, 2009.

\bibitem{Bental1972}
A.~Ben-Tal and E.~Hochman.
\newblock More bounds on the expectation of a convex function of a random
  variable.
\newblock {\em Journal of Applied Probability}, 9(4):803--812, 1972.

\bibitem{bertsimas2005optimal}
D.~Bertsimas and I.~Popescu.
\newblock Optimal inequalities in probability theory: A convex optimization
  approach.
\newblock {\em SIAM Journal on Optimization}, 15(3):780--804, 2005.

\bibitem{bertsimas2004price}
D.~Bertsimas and M.~Sim.
\newblock The price of robustness.
\newblock {\em Operations Research}, 52(1):35--53, 2004.

\bibitem{bonett2003confidence}
D.~G. Bonett and E.~Seier.
\newblock Confidence intervals for mean absolute deviations.
\newblock {\em The American Statistician}, 57(4):233--236, 2003.

\bibitem{burnetas2007equilibrium}
A.~Burnetas and A.~Economou.
\newblock Equilibrium customer strategies in a single server {M}arkovian queue
  with setup times.
\newblock {\em Queueing Systems}, 56(3-4):213--228, 2007.

\bibitem{chen2020knowledge}
Y.~Chen and J.~J. Hasenbein.
\newblock Knowledge, congestion, and economics: Parameter uncertainty in
  {N}aor’s model.
\newblock {\em Queueing Systems}, 96(1):83--99, 2020.

\bibitem{debo2014equilibrium}
L.~Debo and S.~Veeraraghavan.
\newblock Equilibrium in queues under unknown service times and service value.
\newblock {\em Operations Research}, 62(1):38--57, 2014.

\bibitem{delage2010distributionally}
E.~Delage and Y.~Ye.
\newblock Distributionally robust optimization under moment uncertainty with
  application to data-driven problems.
\newblock {\em Operations Research}, 58(3):595--612, 2010.

\bibitem{economou2008equilibrium}
A.~Economou and S.~Kanta.
\newblock Equilibrium balking strategies in the observable single-server queue
  with breakdowns and repairs.
\newblock {\em Operations Research Letters}, 36(6):696--699, 2008.

\bibitem{esfahani2017data}
P.~M. Esfahani and D.~Kuhn.
\newblock Data-driven distributionally robust optimization using the
  {W}asserstein metric: Performance guarantees and tractable reformulations.
\newblock {\em Mathematical Programming}, pages 1--52, 2017.

\bibitem{esfahani2018data}
P.~M. Esfahani, S.~Shafieezadeh-Abadeh, G.~A. Hanasusanto, and D.~Kuhn.
\newblock Data-driven inverse optimization with imperfect information.
\newblock {\em Mathematical Programming}, 167(1):191--234, 2018.

\bibitem{gao2016distributionally}
R.~Gao and A.~J. Kleywegt.
\newblock Distributionally robust stochastic optimization with {W}asserstein
  distance.
\newblock {\em arXiv preprint arXiv:1604.02199}, 2016.

\bibitem{guo2011strategic}
P.~Guo and R.~Hassin.
\newblock Strategic behavior and social optimization in {M}arkovian vacation
  queues.
\newblock {\em Operations Research}, 59(4):986--997, 2011.

\bibitem{guo2007analysis}
P.~Guo and P.~Zipkin.
\newblock Analysis and comparison of queues with different levels of delay
  information.
\newblock {\em Management Science}, 53(6):962--970, 2007.

\bibitem{hanasusanto2015distributionally}
G.~A. Hanasusanto, D.~Kuhn, S.~W. Wallace, and S.~Zymler.
\newblock Distributionally robust multi-item newsvendor problems with
  multimodal demand distributions.
\newblock {\em Mathematical Programming}, 152(1-2):1--32, 2015.

\bibitem{hassin2003queue}
R.~Hassin and M.~Haviv.
\newblock {\em To queue or not to queue: Equilibrium behavior in queueing
  systems}, volume~59.
\newblock Springer Science \& Business Media, 2003.

\bibitem{hassin2021strategic}
R.~Hassin, M.~Haviv, and B.~Oz.
\newblock Strategic behavior in queues with arrival rate uncertainty.
\newblock {\em Available at SSRN 3801593}, 2021.

\bibitem{haviv2016regulating}
M.~Haviv and B.~Oz.
\newblock Regulating an observable {M}/{M}/1 queue.
\newblock {\em Operations Research Letters}, 44(2):196--198, 2016.

\bibitem{herrey1965confidence}
E.~M. Herrey.
\newblock Confidence intervals based on the mean absolute deviation of a normal
  sample.
\newblock {\em Journal of the American Statistical Association},
  60(309):257--269, 1965.

\bibitem{li2014distributionally}
X.~Li, K.~Natarajan, C.-P. Teo, and Z.~Zheng.
\newblock Distributionally robust mixed integer linear programs: Persistency
  models with applications.
\newblock {\em European Journal of Operational Research}, 233(3):459--473,
  2014.

\bibitem{CCLiu}
C.~Liu and J.~J. Hasenbein.
\newblock Naor’s model with heterogeneous customers and arrival rate
  uncertainty.
\newblock {\em Operations Research Letters}, 47(6):594--600, 2019.

\bibitem{lofberg2004yalmip}
J.~Lofberg.
\newblock Yalmip: A toolbox for modeling and optimization in {MATLAB}.
\newblock In {\em 2004 IEEE International Conference on Robotics and Automation
  (IEEE Cat. No. 04CH37508)}, pages 284--289. IEEE, 2004.

\bibitem{mosek}
{MOSEK ApS}.
\newblock {\em MOSEK Optimizer API for Python 9.2.10}, 2019.

\bibitem{naor1969regulation}
P.~Naor.
\newblock The regulation of queue size by levying tolls.
\newblock {\em Econometrica: Journal of the Econometric Society}, pages 15--24,
  1969.

\bibitem{pflug2007ambiguity}
G.~Pflug and D.~Wozabal.
\newblock Ambiguity in portfolio selection.
\newblock {\em Quantitative Finance}, 7(4):435--442, 2007.

\bibitem{scarf1957min}
H.~E. Scarf.
\newblock A min-max solution of an inventory problem.
\newblock Technical report, RAND CORP SANTA MONICA CALIF, 1957.

\bibitem{shafieezadeh2015distributionally}
S.~Shafieezadeh-Abadeh, P.~M. Esfahani, and D.~Kuhn.
\newblock Distributionally robust logistic regression.
\newblock In {\em Advances in Neural Information Processing Systems}, pages
  1576--1584, 2015.

\bibitem{shapiro2001duality}
A.~Shapiro.
\newblock On duality theory of conic linear problems.
\newblock In {\em Semi-infinite programming}, pages 135--165. Springer, 2001.

\bibitem{shapiro2002minimax}
A.~Shapiro and A.~Kleywegt.
\newblock Minimax analysis of stochastic problems.
\newblock {\em Optimization Methods and Software}, 17(3):523--542, 2002.

\bibitem{toh1999sdpt3}
K.-C. Toh, M.~J. Todd, and R.~H. T{\"u}t{\"u}nc{\"u}.
\newblock {SDPT}3—a {MATLAB} software package for semidefinite programming,
  version 1.3.
\newblock {\em Optimization Methods and Software}, 11(1-4):545--581, 1999.

\bibitem{van2022mad}
W.~van Eekelen, D.~den Hertog, and J.~S. van Leeuwaarden.
\newblock {MAD} dispersion measure makes extremal queue analysis simple.
\newblock {\em INFORMS Journal on Computing}, 2022.

\bibitem{wiesemann2014distributionally}
W.~Wiesemann, D.~Kuhn, and M.~Sim.
\newblock Distributionally robust convex optimization.
\newblock {\em Operations Research}, 62(6):1358--1376, 2014.

\bibitem{vzavckova1966minimax}
J.~{\v{Z}}{\'a}{\v{c}}kov{\'a}.
\newblock On minimax solutions of stochastic linear programming problems.
\newblock {\em {\v{C}}asopis Pro P{\v{e}}stov{\'a}n{\'\i} Matematiky},
  91(4):423--430, 1966.

\end{thebibliography}

\appendix \section{Proofs of Section \ref{sec:DROM}} \label{sec:proof_sec3}
\begin{lem}
The first and second derivatives of the social benefit rate function $f_n(\rho)$ are continuous.
\end{lem}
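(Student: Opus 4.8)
The plan is to dissolve the apparent singularity at $\rho=1$ by rewriting $f_n$ as a \emph{single} rational function whose denominator is strictly positive on $[0,\infty)$; once this is achieved, continuity of the first and second derivatives (indeed of every derivative) is immediate and the two-piece definition becomes an artifact. Away from $\rho=1$, $f_n$ is manifestly a ratio of polynomials with nonvanishing denominator and hence smooth, so the entire issue is localized at the single point $\rho=1$. First I would place both summands of $f_n$ over the common denominator $1-\rho^{n+1}=(1-\rho)(1+\rho+\cdots+\rho^n)$. Writing $\tfrac{\rho}{1-\rho}=\tfrac{\rho+\rho^2+\cdots+\rho^{n+1}}{1-\rho^{n+1}}$ and collecting terms gives, for every $\rho\neq 1$,
\[
f_n(\rho)=\frac{R\mu(\rho-\rho^{n+1})-C\bigl(\rho+\rho^2+\cdots+\rho^n-n\rho^{n+1}\bigr)}{1-\rho^{n+1}}=:\frac{P(\rho)}{1-\rho^{n+1}},
\]
with $P$ a polynomial of degree at most $n+1$.

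Next I would exhibit the crucial cancellation $P(1)=0$: the first group contributes $R\mu(1-1)=0$, while the second contributes $C\bigl(\underbrace{1+\cdots+1}_{n}-n\bigr)=0$. Since $\rho=1$ is a \emph{simple} root of $1-\rho^{n+1}$, the factor $(1-\rho)$ divides $P$; writing $P(\rho)=(1-\rho)\tilde P(\rho)$ and cancelling the common factor yields the reduced representation
\[
f_n(\rho)=\frac{\tilde P(\rho)}{1+\rho+\cdots+\rho^n}=:\frac{\tilde P(\rho)}{S(\rho)}\qquad\forall\,\rho\neq 1.
\]
Because $S(\rho)=\sum_{k=0}^n\rho^k\geq 1>0$ for all $\rho\geq 0$, the right-hand side is a ratio of polynomials with nowhere-vanishing denominator on $[0,\infty)$, hence is $C^\infty$ there and in particular smooth across $\rho=1$. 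As $\tilde P/S$ is continuous and agrees with $f_n$ off $\rho=1$, and $f_n$ is already known to be continuous, the two coincide at $\rho=1$ as well; thus $f_n\equiv\tilde P/S$ on all of $[0,\infty)$, and $f_n'$, $f_n''$ are continuous.

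The only delicate point---and the one I would flag as the crux---is verifying that the numerator $P$ vanishes at $\rho=1$ to at least the order of the denominator's zero there, so that the cancellation against $(1-\rho)$ is legitimate; everything afterward is free. Should one wish to avoid the explicit factorization, an alternative is to compute the one-sided limits $\lim_{\rho\to 1}f_n'(\rho)$ and $\lim_{\rho\to 1}f_n''(\rho)$ directly, e.g.\ by Taylor-expanding the numerator and denominator of $P/(1-\rho^{n+1})$ about $\rho=1$ (or by repeated application of L'H\^opital's rule), and matching them to the derivatives obtained from the reduced form. I prefer the factorization route precisely because it sidesteps these limit computations and makes smoothness structural rather than computational.
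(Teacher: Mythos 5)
Your proof is correct and follows essentially the same route as the paper: both dissolve the singularity at $\rho=1$ by rewriting $f_n$ as a rational function whose denominator $1+\rho+\cdots+\rho^n$ is strictly positive on $[0,\infty)$, after which continuity of all derivatives is immediate. The only difference is cosmetic --- the paper carries out the cancellation of the factor $(1-\rho)$ explicitly, term by term, to produce a closed-form reduced numerator, whereas you justify the cancellation abstractly via the factor theorem after checking $P(1)=0$.
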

\begin{proof}
To show the continuity of the first and second derivative of $f_n(\rho)$, we will show that
\begin{equation}\label{eq:f_n(rho)_cont}
    f_n(\rho)=R\mu\left(1-\frac{1}{\sum_{k=0}^{n} \rho^k} \right)-C\left(\frac{\rho(\sum_{k=0}^{n-1}\rho^k)+\rho^2(\sum_{k=0}^{n-2}\rho^k)+\ldots+\rho^n}{(\sum_{k=0}^{n}\rho^k)} \right),
\end{equation}
which has continuous first and second derivatives.

First, we perform the transformation for the term $\frac{\rho (1-\rho^n)}{1-\rho^{n+1}}$ when $\rho \neq 1$. Note that $\frac{\rho (1-\rho^n)}{1-\rho^{n+1}}=1-\frac{1-\rho}{1-\rho^{n+1}}$, and the denominator is equal to $(1-\rho)(1+\rho+\rho^2+\ldots+\rho^{n})$. We can consequently rewrite the first term as
\begin{equation*}
    \frac{\rho (1-\rho^n)}{1-\rho^{n+1}}=1-\frac{1}{\sum_{k=0}^{n} \rho^k}.
\end{equation*}

Next, we prove the equivalence of the remaining part $\frac{ (n+1)\rho^{n+1}}{1-\rho^{n+1}}-\frac{ \rho}{1-\rho}$ when $\rho \neq 1$. Similarly, by the fact that $(1-\rho^{n+1})=(1-\rho)(\sum_{k=0}^{n} \rho^k)$, we can rewrite this part as
\begin{align*}
    \frac{ (n+1)\rho^{n+1}}{1-\rho^{n+1}}-\frac{ \rho}{1-\rho}&=\frac{(n+1)\rho^{n+1}}{(1-\rho)(\sum_{k=0}^{n}\rho^k)}-\frac{\rho(\sum_{k=0}^{n}\rho^k)}{(1-\rho)(\sum_{k=0}^{n}\rho^k)}\\
    &=\frac{-\rho-\rho^2-\ldots-\rho^{n}+n\rho^{n+1}}{(1-\rho)(\sum_{k=0}^{n}\rho^k)}\\
    &=\frac{\rho^{n+1}-\rho+\rho^{n+1}-\rho^2+\ldots+\rho^{n+1}-\rho^n}{(1-\rho)(\sum_{k=0}^{n}\rho^k)}\\
    &=\frac{\rho(\rho-1)(1+\rho+\ldots+\rho^{n-1})+\rho^2(\rho-1)(1+\rho+\ldots+\rho^{n-2})+\ldots+\rho^n(\rho-1)}{(1-\rho)(\sum_{k=0}^{n}\rho^k)}\\
    &=-\frac{\rho(\sum_{k=0}^{n-1}\rho^k)+\rho^2(\sum_{k=0}^{n-2}\rho^k)+\ldots+\rho^n}{(\sum_{k=0}^{n}\rho^k)}.
\end{align*}

When $\rho=1$, $R\mu \left(1-\frac{1}{\sum_{k=0}^{n} \rho^k} \right) - C\left(\frac{\rho(\sum_{k=0}^{n-1}\rho^k)+\rho^2(\sum_{k=0}^{n-2}\rho^k)+\ldots+\rho^n}{(\sum_{k=0}^{n}\rho^k)} \right)=R\mu\left(1-\frac{1}{1+n}\right) - C \frac{n}{2}$, which coincides with $f_n(1)$. Therefore, $f_n(\rho)$ is equal to \eqref{eq:f_n(rho)_cont}. One can verify that the first and second derivatives of \eqref{eq:f_n(rho)_cont} are continuous; hence, $f_n(\rho)$ also has these properties.
\end{proof}

\begin{lem}\label{lem:rm_concave2}
    The function $h_n(\rho) = \frac{\rho(1-\rho^n)}{1-\rho^{n+1}}$ is strictly concave and monotone increasing on $[0,1) \cup (1, \infty)$.
\end{lem}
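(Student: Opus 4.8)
The plan is to remove the apparent singularity at $\rho = 1$ by passing to a polynomial representation of $h_n$. As established in the proof of the preceding lemma, for $\rho \neq 1$ one has $h_n(\rho) = 1 - 1/S_n(\rho)$, where $S_n(\rho) := \sum_{k=0}^n \rho^k$. Since $S_n$ is a polynomial with $S_n(\rho) \geq 1$ for every $\rho \geq 0$, the expression $1 - 1/S_n$ is smooth on all of $[0,\infty)$ and agrees with $h_n$ there (in particular at $\rho = 1$). Working with $1 - 1/S_n$ therefore reduces both claims to elementary statements about the polynomial $S_n$ and its derivatives.

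First I would settle monotonicity. Differentiating gives $h_n'(\rho) = S_n'(\rho)/S_n(\rho)^2$. Since $S_n'(\rho) = \sum_{k=1}^n k\rho^{k-1} > 0$ for $\rho > 0$ and $S_n'(0) = 1 > 0$, while $S_n^2 > 0$ throughout, we obtain $h_n' > 0$ on $[0,\infty)$, hence strict monotonicity on each of $[0,1)$ and $(1,\infty)$.

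For concavity, a second differentiation yields $h_n''(\rho) = -P(\rho)/S_n(\rho)^3$ with $P(\rho) := 2 S_n'(\rho)^2 - S_n(\rho) S_n''(\rho)$. As $S_n^3 > 0$, strict concavity is equivalent to $P(\rho) > 0$ for $\rho > 0$. The heart of the argument is to expand $P$ and read off its coefficients: writing $P(\rho) = \sum_{s} c_s \rho^{s-2}$ with $c_s = \sum_{i+j=s,\,0 \le i,j \le n} (2ij - j(j-1))$, I would substitute $i = s-j$ to rewrite the inner summand as the quadratic $j(2s+1-3j)$. The algebraic identity $\sum_{j=0}^{s} j(2s+1-3j) = 0$ then forces $c_s = 0$ for every $s \le n$, while a short summation over the truncated range $s - n \le j \le n$ collapses to the closed form $c_s = (s-n)(n+1)(2n+1-s)$ for $n+1 \le s \le 2n$. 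Each factor is strictly positive in that range, so $P(\rho) = \sum_{s=n+1}^{2n} c_s \rho^{s-2} > 0$ for every $\rho > 0$, giving $h_n'' < 0$ on $(0,\infty)$ (the second derivative vanishes at most at the isolated point $\rho = 0$, and only when $n \ge 2$), which still yields strict concavity on the stated domain.

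The main obstacle is the bookkeeping in this last step: correctly handling the two boundary constraints $0 \le i,j \le n$ when grouping $P$ by powers of $\rho$, and executing the truncated-sum simplification that produces the clean factorization $c_s = (s-n)(n+1)(2n+1-s)$. Everything else follows from the positivity of $S_n$ and its first two derivatives.
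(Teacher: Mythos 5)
Your proof is correct, and it takes a genuinely different route from the paper's. The paper works directly with the rational form of $h_n$, computes $h_n'(\rho)=\frac{n\rho^{n+1}-(n+1)\rho^n+1}{(1-\rho^{n+1})^2}$ and $h_n''(\rho)$, and then determines the sign of the numerator polynomials through a cascade of nested derivative arguments (the functions $\varphi_n$, $\psi_n$, $\psi_n'$, $\psi_n''$, each shown monotone on $(0,1)$ and $(1,\infty)$ with a zero at $\rho=1$), all the while carrying the sign-flipping denominator $(1-\rho^{n+1})^3$ and excluding the point $\rho=1$. You instead exploit the representation $h_n = 1 - 1/S_n$ with $S_n(\rho)=\sum_{k=0}^n\rho^k$ (which the paper derives in the preceding appendix lemma but uses only to prove continuity of derivatives), reducing everything to the positivity of the polynomial $P = 2(S_n')^2 - S_nS_n''$, which you establish coefficient-by-coefficient: I verified both the identity $\sum_{j=0}^{s} j(2s+1-3j)=0$ (which kills all coefficients with $s\le n$) and the closed form $c_s=(s-n)(n+1)(2n+1-s)$ for $n+1\le s\le 2n$ (e.g., for $n=3$ this gives $P=12\rho^2+16\rho^3+12\rho^4$, matching direct expansion), so the computation is sound. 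What each approach buys: the paper's argument is calculus-only and needs no combinatorial bookkeeping, but requires repeated differentiation and careful case analysis around the removable singularity at $\rho=1$; your argument is a one-shot algebraic computation that additionally shows $h_n$ extends smoothly across $\rho=1$ and is strictly concave and increasing on all of $[0,\infty)$ — a strictly stronger conclusion than the stated lemma, and in fact exactly the form needed downstream in Lemma \ref{lem:rm_concave}, where the paper must separately invoke continuity at $\rho=1$ to pass from $[0,1)\cup(1,\infty)$ to $\mathbb{R}_+$.
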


\begin{proof}
    
    
    When $\rho \in [0,1) \cup (1, \infty)$, the first derivative of $h_n(\rho)$ is $$h_n'(\rho)= \frac{n \rho^{n+1} - (n+1) \rho^n + 1}{(1-\rho^{n+1})^2}.$$ Define the numerator as $\varphi_n(\rho) = n \rho^{n+1} - (n+1) \rho^n + 1$. The first derivative of $\varphi_n(\rho)$ is given by $\varphi_n'(\rho) = n(n+1)\rho^{n-1}(\rho-1).$ Note that when $0 < \rho < 1$, $\varphi_n'(\rho)$ is negative, and when $\rho > 1$, $\varphi_n'(\rho) $ is positive. Therefore, the function $\varphi_n(\rho)$ is decreasing on $(0,1)$ and increasing on $(1, \infty)$. Meanwhile, by the fact that $\varphi_n(1)= 1 + n - (n+1) = 0$, we know that the numerator $\varphi_n(\rho)$ is positive on $[0,1) \cup (1, \infty)$. Since the denominator  $(1-\rho^{n+1})^2$  is  positive, the first derivative $h
'_n(\rho)$  is positive on $[0,1) \cup (1,\infty)$. Thus, we conclude that $h_n(\rho)$ is increasing on $[0,1) \cup (1, \infty)$.
    
    Next, we show the second derivative of $h_n(\rho)$ is negative. We have
    $$h_n''(\rho)= \frac{(n+1)\rho^{n-1} [n\rho^{n+2}-(n+2) \rho^{n+1} +(n+2)\rho -n]}{(1-\rho^{n+1})^3}.$$ Since the term $\frac{(n+1)\rho^{n-1}}{(1-\rho^{n+1})^3}$ is positive on $[0,1)$ and is negative on $(1,\infty)$, we simply need to determine the sign of $ [n\rho^{n+2}-(n+2) \rho^{n+1} +(n+2)\rho -n]$. For convenience, define 
    \[\psi_n(\rho)\coloneqq n\rho^{n+2}-(n+2) \rho^{n+1} +(n+2)\rho -n.\]
    Note that $\psi_n (0) = -n <0$ and $\psi_n (1) = 0$, while $\lim_{\rho \rightarrow \infty} \psi_n (\rho) = +\infty$. Therefore, if $\psi_n(\rho)$ is increasing on $[0,1) \cup (1, \infty)$, the second derivative $h''_n(\rho)$ will be negative on $[0,1) \cup (1, \infty)$. To show this, we take the first derivative of $\psi_n(\rho)$ and obtain $$ \psi_n'(\rho) = n(n+2)\rho^{n+1} - (n+2)(n+1)\rho^n+(n+2).$$ 
    Taking specific values into this function we can obtain $\psi_n'(0)= n+2 >0$, $\psi_n'(1) = 0$ and $\lim_{\rho \rightarrow \infty} \psi_n(\rho) = +\infty$. Similarly, if $\psi_n'(\rho)$ is decreasing on $[0,1)$ and increasing on $(1,\infty)$, then $\psi'_n(\rho)$ will be positive on $[0,1) \cup (1, \infty)$. To verify this, we can take the second derivative of $\psi_n(\rho)$, which gives $$\psi_n''(\rho) = (n+2)(n+1)n\rho^{n-1}(\rho - 1). $$ One can verify that
    $\psi_n''(\rho)$ is negative on $[0,1)$ and positive on $(1,\infty)$. Thus, we have established that $h''_n(\rho)$ is negative on $[0,1) \cup (1, \infty)$ and  $h_n(\rho)$ is concave on $[0,1) \cup (1, \infty)$.
\end{proof}

\begin{lem}\label{lem:rpart_concave}
    For any $v \in\RR$, $v\geq 1$, the function $g_v(\rho) = \frac{v\rho}{1-\rho^{v 
    }}-\frac{\rho}{1-\rho}$ is concave on $[0,1)$.
\end{lem}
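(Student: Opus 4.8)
The plan is to prove concavity by establishing that the second derivative is nonpositive on the open interval, since $g_v$ is continuous on $[0,1)$ and smooth on $(0,1)$, so $g_v''(\rho)\le 0$ for $\rho\in(0,1)$ yields concavity on $[0,1)$. The structural observation I would exploit first is that the subtracted term is exactly the $v=1$ instance of the leading term: writing $A_v(\rho):=\frac{v\rho}{1-\rho^v}$, we have $A_1(\rho)=\frac{\rho}{1-\rho}$, so $g_v=A_v-A_1$ and in particular $g_1\equiv 0$. This reduces the task to comparing $A_v''$ against $A_1''$.

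Carrying out the quotient rule twice on $A_v$ gives $A_v'(\rho)=v\,\frac{1+(v-1)\rho^v}{(1-\rho^v)^2}$ and then
\[
g_v''(\rho)=\frac{v^2\rho^{v-1}\big[(v+1)+(v-1)\rho^v\big]}{(1-\rho^v)^3}-\frac{2}{(1-\rho)^3},
\]
where the second term is $A_1''(\rho)=\tfrac{2}{(1-\rho)^3}$ (a useful consistency check is the partial-fraction case $v=2$, where $A_2=\frac{1}{1-\rho}-\frac{1}{1+\rho}$ reproduces the formula). Since both denominators are strictly positive on $(0,1)$, the inequality $g_v''\le 0$ is equivalent, after clearing denominators, to the elementary inequality
\[
v^2\rho^{v-1}\big[(v+1)+(v-1)\rho^v\big](1-\rho)^3\;\le\;2\,(1-\rho^v)^3\qquad\forall\,\rho\in(0,1),\ v\ge 1. \tag{$\star$}
\]
Equivalently, I would phrase the goal as showing that for each fixed $\rho$ the map $v\mapsto A_v''(\rho)$ is nonincreasing on $[1,\infty)$; combined with the anchor $g_1''\equiv 0$ (i.e.\ $A_1''=A_v''$ at $v=1$), this delivers $g_v''\le 0$ for all $v\ge 1$.

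The crux, and the step I expect to be the main obstacle, is proving $(\star)$, because it is genuinely a two-variable inequality that is \emph{tight} along the boundary $v=1$ (both sides there agree, and a leading-order expansion as $\rho\to 1$ shows the two sides also agree to order $(1-\rho)^3$). To tame it I would substitute $a:=-\ln\rho>0$ and $b:=va\ge a$, so that $\rho^v=e^{-b}$ and $\rho=e^{-a}$, rewriting $(\star)$ as
\[
2a^3(1-e^{-b})^3\;\ge\;b^2 e^{a-b}\big[(b+a)+(b-a)e^{-b}\big](1-e^{-a})^3,\qquad b\ge a>0,
\]
which vanishes identically at $b=a$ and tends to the strictly positive limit $2a^3$ as $b\to\infty$. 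I would then fix $a$ and study the gap $\Theta(b)$ between right- and left-hand sides, showing $\Theta(a)=0$ and $\Theta(b)\ge 0$ for $b\ge a$ via a sign analysis of $\Theta'$, anchored by the equality at $b=a$; the tangent-line convexity bound $1-\rho^v\ge v\rho^{v-1}(1-\rho)$ (from convexity of $\rho\mapsto\rho^v$) together with its companion $1-\rho^v\le v(1-\rho)$ are the natural elementary estimates to control the exponential factors. The delicacy is precisely that the inequality is tight at $v=1$, so crude one-sided bounds lose too much near that boundary and the argument must track the first nonvanishing order of the gap; this is where the bulk of the technical work will lie.
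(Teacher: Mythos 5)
Your setup is correct and in fact coincides with the paper's: your second-derivative formula
\[
g_v''(\rho)=\frac{v^2\rho^{v-1}\bigl[(v+1)+(v-1)\rho^v\bigr]}{(1-\rho^v)^3}-\frac{2}{(1-\rho)^3}
\]
matches the paper's expression exactly, and the paper likewise anchors the argument at $v=1$ (where $g_1''\equiv 0$) and reduces everything to showing that the second derivative is nonincreasing in $v$, i.e.\ $\partial_v g_v''(\rho)\le 0$ for $v\ge 1$, $\rho\in(0,1)$ --- precisely your claim that $v\mapsto A_v''(\rho)$ is nonincreasing. The genuine gap is that you never prove this claim (equivalently, your inequality $(\star)$). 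Your proposal stops at naming a substitution ($a=-\ln\rho$, $b=va$) and announcing ``a sign analysis of $\Theta'$,'' and you yourself concede that the elementary bounds you suggest, $1-\rho^v\ge v\rho^{v-1}(1-\rho)$ and $1-\rho^v\le v(1-\rho)$, lose too much near the boundary $v=1$ where the inequality is tight. That is not a deferred technicality; it is the entire content of the lemma. Everything up to $(\star)$ is routine calculus, and a plausible-looking plan is not evidence that it closes, precisely because any successful argument must be exact at $v=1$ and exact to third order in $(1-\rho)$ as $\rho\to 1$, which rules out the crude estimates you list.

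For comparison, here is how the paper actually executes this step. It multiplies $\partial_v g_v''(\rho)$ by the positive factor $(1-\rho^v)^4/(v\,\rho^{v-1})$ to obtain an explicit function $\psi_v(\rho)$ involving $\ln\rho$ and the powers $\rho^v,\rho^{2v}$, and establishes $\psi_v\le 0$ on $(0,1)$ by a descent through successive derivatives: at each stage the current function vanishes at $\rho=1$ and has a known sign or limit as $\rho\to 0_+$, so the desired sign follows from monotonicity, which is in turn reduced --- after dividing the derivative by an explicit positive factor such as $v\rho^{v-1}$ or multiplying by $\rho^{2v+1}/(2v)$ --- to a statement of the same form one level down. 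After four such reductions the problem collapses to the trivial inequality $(v-1)\rho^v-v\le 0$ for $v\ge 1$, $\rho\in(0,1)$. Some chain of this kind (or another device that respects the tightness at $v=1$) is what your proposal still needs before it constitutes a proof; as written, it sets up the problem and then defers its only hard part.
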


\begin{proof}
    For any $v \in\RR$, $v\geq 1$, one can verify that $g_v(\rho)$ is continuous and second order differentiable on $[0,1)$. Thus, $g_v(\rho)$ is concave if and only if its second derivative $$ g_v''(\rho) = \frac{v^2 \rho^{v-1} (1 + v + \rho^v(v-1))}{(1 - \rho^v)^3} - \frac{2}{(1-\rho)^3}$$ 
    is non-positive for every $\rho\in[0,1)$. Notice that when $\rho=0$, $g_v''(\rho)=-2$ is less than zero. We now prove that the second derivative is also non-positive on $(0,1)$. We first observe that $g_v''(\rho)=0$ at $v=1$ for all $\rho\in[0,1)$. Consider the partial derivative with respect to $v$:
    $$\partial_v g_v''(\rho)={\frac {v \left(  \left(  \left( {v}^{2}-v \right) \ln  \left( \rho  \right) -3\,v+2 \right) {\rho}^{3\,v-1}+ \left( 4\,{v}^{2}\ln  \left( \rho  \right) -4 \right) {\rho}^{2\,v-1}+{\rho}^{v-1} \left(  \left( {v}^{2}+v  \right) \ln  \left( \rho \right) +3\,v+2 \right)  \right) }{ \left( 1- {\rho}^{v} \right) ^{4}}}.$$
    If this function is non-positive for all $v\in\RR$, $v\geq 1$,  then we can establish that the second derivative $g_v''(\rho)$ is non-positive for all $\rho\in(0,1)$. 
    
    Consider a fixed $v\in\RR$, $v\geq 1$. 
    Defining $\psi(\rho)$ as the product of $\partial_v g_v''(\rho)$ and $\frac{(1-\rho^v)^4}{v \, \rho^{v-1}}>0$ yields
    \begin{equation}\nonumber
        \psi_v(\rho)\coloneqq\left( v \left( v-1 \right) {\rho}^{2\,v}+4\,{v}^{2}{\rho}^{v}+{v}^{2}+v \right) \ln  \left( \rho \right) + \left( -3\,v+2 \right) {\rho}^{2\,v}-4 \,{\rho}^{v}+3\,v+2.
    \end{equation}
    We  show $\psi_v(\rho)$ is non-positive for $\rho \in (0,1)$. Observe that $\psi_n(\rho)$ goes to negative infinity as $\rho \rightarrow 0_+$ and equals to zero at $\rho=1$. Thus, it is sufficient to show that $\psi_v(\rho)$ is increasing on $\rho \in (0,1)$ for every fixed $v$. Taking the derivative with respect to $\rho$ and dividing it by $v\rho^{v-1} > 0$ yields
    \begin{equation}\nonumber
        \tau_n(\rho) \coloneqq \frac{\psi'_n(\rho)}{v\rho^{v-1}}=2\,v \left(  \left( v-1 \right) {\rho}^{v}+2\,v \right) \ln  \left( \rho \right) + \left( v+1 \right) {\rho}^{-v}+ \left( -5\,v+3 \right) {\rho}^{v} +4\,v-4 .
    \end{equation}
    Similarly, one can verify that this expression goes to positive infinity as $\rho \rightarrow 0_+$ and is equal to zero at $\rho=1$. Therefore, to show that ${\psi'_n(\rho)}$ is positive on $(0,1)$, it is sufficient to show that $\tau_n(\rho)$ is decreasing on $(0,1)$. Again, taking the derivative with respect to $\rho$ and dividing it by $v\rho^{v-1}>0$, we get 
    \begin{equation*}
        \varphi(\rho)\coloneqq \frac{\tau'_n(\rho)}{v\rho^{v-1}}=2v\left( v-1 \right) \ln  \left( \rho \right) - \left( v+1 \right) {\rho}^{-2\,v}+4v\,{\rho}^{-v} - 3\,v+1.
    \end{equation*}
    This expression again vanishes at $\rho=1$ and goes to negative infinity as $\rho \rightarrow 0_+$. Thus, it is sufficient to show it is increasing on $(0,1)$. Taking the derivative with respect to $\rho$ and multiplying with $\frac{\rho^{2v+1}}{2v}>0$ yield:
    \begin{equation*}
        \theta_n(\rho)\coloneqq\frac{\varphi'(\rho)\rho^{2v+1}}{2v}=\left( v-1 \right) {\rho}^{2\,v}-2\,v\,{\rho}^{v}+v+1.
    \end{equation*}
     At $\rho=0$, $\theta_n(\rho)$ is equal to $v+1$, which is greater than zero, and vanishes  at $\rho=1$. 
     Taking the derivative with respect to $\rho$ and dividing by $2n\, \rho^{n-1}>0$, we have 
     $$\phi_n(\rho)\coloneqq \frac{\theta_n'(\rho)}{2n\, \rho^{n-1}}=(v-1)\rho^v - v.$$ 
     One can verify that when $v\geq 1$, $\phi(\rho)$ is always non-positive, which completes our proof.
\end{proof}

\begin{proof}[Proof of Lemma \ref{lem:so_propertie} statement (1)]
    Using the lemmas above, we are ready to show that when $\frac{R\mu}{C} \geq n+1$, the social benefit rate function $f_n(\rho)$ is strictly concave on $[0,1]$.
    For $\rho \in [0,1)$, we can rewrite~$f_n(\rho)$ as $$f_n(\rho) = \left(R \mu - C(n+1)\right) \frac{\rho (1-\rho^n)}{1-\rho^{n+1}} + C \frac{(n+1)\rho}{1-\rho^{n+1}} - \frac{C \rho}{1-\rho}.$$ 
    
    From Lemma \ref{lem:rm_concave2} and Lemma \ref{lem:rpart_concave}, we know that  $\frac{\rho (1-\rho^n)}{1-\rho^{n+1}}$ is strictly concave and $ \frac{(n+1)\rho}{1-\rho^{n+1}} - \frac{ \rho}{1-\rho}$ is concave. Therefore, $f_n(\rho)$ is the sum of a strictly concave and a concave function, which is  strictly concave for $\rho \in [0,1)$.  
\end{proof}


\begin{proof}[Proof of Lemma \ref{lem:so_propertie} statement (2)]
    When $n=1$, one can  verify that $f_1(\rho)$ is a concave increasing function for $\rho \in \RR_+$. We now proceed to show the function is unimodal for $n \geq 2$. A sufficient condition for $f_n(\rho)$ to be unimodal is $f_n'(0) > 0$, $\lim_{\rho \rightarrow \infty} f_n'(\rho)<0$, and $f_n'(\rho)=0$ has a unique solution. Taking the derivative of $f_n(\rho)$ yields:
	\begin{equation*}
	f_n'(\rho)=\left\{\begin{array}{ll}
	     R \mu \left(\frac{n\rho^{n+1} -(n+1) \rho^{n} +1 }{(\rho^{n+1}-1)^2} \right) + C \left(\frac{(n+1) \left(n\rho^{n+1} + 1 \right)}{(\rho^{n+1}-1)^2} - \frac{1}{(\rho-1)^2}\right) \quad &\rm{if} \ \rho \neq 1\\
	      \lim_{\rho \rightarrow 1}R \mu \left(\frac{n\rho^{n+1} -(n+1) \rho^{n} +1 }{(\rho^{n+1}-1)^2} \right) + C \left(\frac{(n+1) \left(n\rho^{n+1} + 1 \right)}{(\rho^{n+1}-1)^2} - \frac{1}{(\rho-1)^2}\right) & \rm{if} \ \rho=1. \\
	\end{array}\right.
	\end{equation*}
    
    Showing $f_n'(\rho)=0$ has exactly one positive root directly is non-trival. However, it is equivalent to showing $(1-\rho)^2 f_n'(\rho)=0$ has exactly three positive roots. One can verify that this new term can be written explicitly as $(1-\rho)^2 f_n'(\rho) = R \mu \left(\frac{n\rho^{n+1} -(n+1) \rho^{n} +1 }{(1+\rho+\ldots+\rho^n)^2} \right) + C \left(\frac{(n+1) \left(n\rho^{n+1} + 1 \right)}{(1+\rho+\ldots+\rho^n)^2} - 1\right),\ \forall \rho \in \RR_+$. We then reformulate the root equation to a polynomial form:
    \begin{align}
		\nonumber   &R \mu \left(\frac{n\rho^{n+1} -(n+1) \rho^{n} +1 }{(1+\rho+\ldots+\rho^n)^2} \right) + C \left(\frac{(n+1) \left(n\rho^{n+1} + 1 \right)}{(1+\rho+\ldots+\rho^n)^2} - 1\right)=0\\
		\nonumber  &\Longleftrightarrow \\
		\nonumber &R\mu (n\rho^{n+1} -(n+1) \rho^{n} +1) + C(n+1)(n\rho^{n+1}+1) = C(1+\rho + \ldots + \rho^n)^2 \\
		\nonumber  &\Longleftrightarrow\\
		\nonumber& C(1+\rho + \ldots + \rho^n)^2 - n(R\mu + C(n+1))\rho^{n+1} +R\mu (n+1)\rho^n -R\mu-C(n+1)=0.
	\end{align}
	The left-hand side of the equation is a single variable polynomial, and one can verify that it has three sign changes. Based on Descartes' rule of signs, the number of positive roots is at most three. By the fact that $ f_n'(0) > 0$ and $\lim_{\rho \rightarrow \infty} f_n'(\rho) < 0$, $f_n'(\rho)$ must has at least one root. Since the term $(1-\rho)^2$ has two roots, we know this polynomial has at least three roots. Therefore, this polynomial has exactly three roots and $f_n'(\rho)$ has exactly one root. This shows that $f_n(\rho)$ is a unimodal function.
\end{proof}

\begin{proof} [Proof of Lemma \ref{lem:so_propertie} statement (3)]      
    The second derivative of $f_n(\rho)$ is 
    \begin{equation*}
        f_n''(\rho)=\left\{\begin{array}{lll}
         R\mu\frac{(n+1)\rho^{n-1}((n+1)(\rho-1)(\rho^{n+1}+1)-2\rho(\rho^{n+1}-1))}{(1 - \rho^{n+1})^3}&\\
         \qquad +C\left(\frac{(n+1)^2 \rho^{n} (2 + n + n\rho^{n+1})}{(1 - \rho^{n+1})^3} - \frac{2}{(1-\rho)^3}\right) &\textup{if} \ \rho \neq 1\\
          \lim_{\rho \rightarrow 1}R\mu\frac{(n+1)\rho^{n-1}((n+1)(\rho-1)(\rho^{n+1}+1)-2\rho(\rho^{n+1}-1))}{(1 - \rho^{n+1})^3} & \\
          \qquad +C\left(\frac{(n+1)^2 \rho^{n} (2 + n + n\rho^{n+1})}{(1 - \rho^{n+1})^3} - \frac{2}{(1-\rho)^3}\right)&\textup{if} \ \rho = 1.
    \end{array}\right.
	\end{equation*}
	
	Showing $f_n''(\rho)=0$ only has one root is equivalent to showing $(1-\rho)^3f_n(\rho)=0$ has exactly four roots. Once can check that $(1-\rho)^3f_n''(\rho)$ coincides with $R\mu\frac{(n+1)\rho^{n-1}((n+1)(\rho-1)(\rho^{n+1}+1)-(\rho+1)(\rho^{n+1}-1))}{(1+\rho + \ldots +\rho^n)^3} +C\left(\frac{(n+1)^2 \rho^{n} (2 + n + n\rho^{n+1})}{(1+\rho + \ldots +\rho^n)^3} - 2\right).$ Similar to the previous proof, we transform the root equation to a polynomial form:
	\begin{align}
        \nonumber &(1-\rho)^3 f_n''(\rho)=0\\
		\nonumber  &\Longleftrightarrow \\
		\nonumber &R\mu\frac{(n+1)\rho^{n-1}((n+1)(\rho-1)(\rho^{n+1}+1)-(\rho+1)(\rho^{n+1}-1))}{(1+\rho + \ldots +\rho^n)^3} +C\frac{(n+1)^2 \rho^{n} (2 + n + n\rho^{n+1})}{(1+\rho + \ldots +\rho^n)^3} \\
		\nonumber &\qquad \qquad  - 2C=0\\
		\nonumber  &\Longleftrightarrow \\
		\nonumber &2C(1+\rho+\ldots+\rho^n)^3+(n+1)\rho^{n-1}(R\mu(n+1)-\left(4C(n+1)+R\mu(n+3)\right)\rho-R\mu(n+1)\rho^2 \\
		\nonumber & + R\mu (n+1)\rho^{n+1}+ (R\mu(n-1)+C(n^2+n)\rho^{n+2})=0.
	\end{align}
	One can verify that this polynomial has four sign changes. Based on Descartes' rule of signs, the number of positive roots is four or two. Since the term $(1-\rho)^3$ already has three roots, $f_n''(\rho)$ has exactly one root, which also implies the sign of $f_n''(\rho)$ changes at most once.
\end{proof}

    \begin{proof}[Proof of Lemma \ref{lem:so_3points}]
    
    We first show that strong duality holds and both the primal and dual optimal solutions are attained, which is a sufficient condition for complementary slackness. To show this, we need to prove both the primal and dual problems have interior points. 
    
    Showing the existence of interior points of the primal problem is equivalent to finding a point $(1,m,d)$ that resides in the interior of the convex cone
    \begin{equation}\nonumber
        \mathcal V =\left\{(l,t,u) \in \RR^3: \ \exists \nu \in \mathcal M_+ \ \rm{such \ that} 
        \hspace{-3mm}\begin{array}{ll}
             &\int_{\Xi} \nu(\rm d\rho)= \mathit l  \\
             &\int_{\Xi} \rho \, \nu(\rm d\rho)=\mathit{t}\\
             &\int_{\Xi} |\rho-t| \, \nu(\rm{d}\rho)=\mathit u
        \end{array}
        \right\},
    \end{equation}
	where $\Xi=[a,b]$. We define $\mathbb B_\kappa(c)$ by the closed Euclidean ball of radius $\kappa \geq 0$ centered at $c$. To this end, choose any point $(l_s,t_s,u_s) \in \mathbb B_\kappa(1) \times \mathbb B_\kappa(m) \times \mathbb B_\kappa(d)$ with sufficiently small $\kappa>0$, and consider the  measure
	\begin{equation}\nonumber
	    \nu_s=\frac{n_s}{2(m_s-a)} \cdot \delta_a+\left(l_s-\frac{u_s}{2(t_s-a)}-\frac{u_s}{2(b-t_s)}\right)\cdot \delta_t + \frac{u_s}{2(b-t_s)}\cdot \delta_b,
	\end{equation}
    where $s \cdot \delta_m$ denotes a measure that places mass $s$ at $m$. By construction, this measure satisfies $\int_{\rho} \nu_s(\rm d\rho)= \mathit l_s$, $\int_{\rho} \rho \, \nu_s(\rm d\rho)=\mathit{t_s}$ and $\int_{\rho} |\rho-t_s| \, \nu_s(\rm{d}\rho)=\mathit u_s$ for a sufficiently small $\kappa$ (since $m \in (a,b)$ and $d \in (0,\frac{2(m-a)(b-m)}{b-a})) $. 
    Therefore, strong duality holds and the optimal values of the primal problem and the dual problems coincide. Moreover, as there exist interior points of the primal problem and the common optimal value is finite, we have the dual optimal solution is also attained~\cite[Proposition 3.4]{shapiro2001duality}.
    Noticing that the support $[a,b]$ is compact, while the social benefit rate function $f_n(\rho)$ and the moment functions $\rho$ and $|\rho-m|$ are continuous, we can invoke  \cite[Corollary 3.1]{shapiro2001duality} to establish that the primal optimal solution is attained.
     To this end, we have strong duality holds and both the primal and  dual optimal solutions are attained, which implies complementary slackness holds~\cite[Proposition 2.1]{shapiro2001duality}.
    
    \end{proof}

\begin{proof}[Proof of Lemma \ref{lem:rm_concave}]
    We know that the revenue rate function is continuous for $\rho \in \mathbb R_+$. Therefore, employing Lemma \ref{lem:rm_concave2} completes the proof.
\end{proof}

	    \begin{proof}[Proof of Proposition \ref{prop:so_mad}]
	The dual problem \eqref{eq:so_mean_abs_dual} can be equivalently written as
    \begin{equation*}
	\begin{array}{ccll}
	&\displaystyle \sup_{\alpha,\beta,\gamma \in \RR}& \displaystyle \mathbb E_{\mathbb P} \left[ \alpha|\rho-m| +\beta \rho + \gamma \right]\\
	&\st&  \alpha|\rho-m| +\beta \rho + \gamma \leq f_n(\rho) \quad \quad \forall \rho \in [a,b],
	\end{array} 
	\end{equation*}
	where $\mathbb P\in\mathcal P$ is an arbitrary probability measure in the ambiguity set. Observe that the left-hand side of the constraint is a two-piece piecewise affine function with a breakpoint at the mean $m$. Therefore, we can interpret the dual problem as finding a feasible two-piece piecewise affine function with the largest expected value. We now use this interpretation to derive the desired results.
	
	First, we illustrate the case when $f_n(b) + f'_n(b) (m-b) \geq f_n(m)$. The constraint of the dual problem indicates that $f_n(\rho)$ majorizes $\alpha|\rho-m| +\beta \rho + \gamma$. One can verify that the two-piece piecewise affine function with the largest expected value is the one that touches $f_n(\rho)$ at three points: $\rho=a,m$ and $b$; see Figure \ref{fig:mad_impro_1} for an illustrative example.  By complementary slackness in Lemma \ref{lem:so_3points}, the optimal distribution can only assign positive mass to these three points, which yields the following system of linear equations: 
    \begin{equation*}
   \label{eq:so_mad_3point_prob1}
   \begin{array}{cc}
        &  p_1(a-m) + p_2(m-m) + p_3(b-m) = m\\
        &  p_1|a-m| + p_2|m-m| + p_3|b-m|= d\\
        &  p_1 + p_2 + p_3 = 1.
   \end{array}
   \end{equation*}
   Solving this system of linear equations leads to the first result in Proposition \ref{prop:so_mad}.
   
   Next, we prove the two cases when $f_n(b) + f'_n(b) (m-b) \leq f_n(m)$. 
    If $0<d<  d_0\coloneqq\frac{2(m-a)(\rho_{t}-m)}{\rho_t-a}$, we claim that the extremal distribution that solves \eqref{eq:so_mean_abs_primal} is a three-point distribution. To see this, we know that complementary slackness holds from Lemma \ref{lem:so_3points}, which means the extremal distribution is supported on points where the dual constraint is binding. Since the two-piece piecewise affine function can touch $f_n(\rho)$ on at most three points under constraint \[\alpha|\rho-m| +\beta \rho + \gamma \leq f_n(\rho) \quad \quad \forall \rho \in [a,b],
    \] 
    the extremal distribution is either a one-point, two-point, or a three-point distribution. We readily exclude the possibility that the extremal distribution is a one-point distribution because the mean-absolute deviation of a one-point distribution is zero. Next,
    we illustrate why the extremal distribution cannot be a two-point distribution. Suppose there exists a two-point distribution supported on $\{\rho_1,\rho_2\}$ that solves the worst-case expecation problem. Then, by complementary slackness, the dual constraint $
   f_n(\rho)=\alpha|\rho-m| +\beta \rho + \gamma$ will be binding at these two points. Without loss of generality, we assume $\rho_1 \in [a,m)$ and $\rho_2 \in (m,b]$. Since $f_n(\rho)$ is strictly concave for $\rho \in [a,m)$ and the dual constraint requires $ \alpha|\rho-t| +\beta \rho + \gamma \leq f_n(\rho)$, we thus have $\rho_1=a$. Since $\rho_t$ is defined as the $\rho$ coordinate of the point such that the line segment between $(m,f_n(m))$ and $(\rho_t,f_n(\rho_t))$ is tangent with $f_n(\rho)$, we must have $\rho_2 \geq \rho_t$; otherwise, the dual constraint will be violated. Since $\rho_2-\rho_1 \geq \rho_t - a$, the corresponding mean-absolute deviation will be greater than $d_0$. Therefore, the extremal distribution cannot be a two-point distribution, i.e., it is a three-point distribution. 
    Next, it can be shown that if $f_n(\rho)$ intersects $\alpha|\rho-m| +\beta \rho + \gamma$ at three points, then these three points must be  $\rho=a,m$ and $\rho_t$. Therefore, we have the following system of linear equations:
    \begin{equation*}  
   \label{eq:so_mad_3point_prob}
   \begin{array}{cc}
        &  p_1(a-m) + p_2(m-m) + p_3(\rho_t-m) = m\\
        &  p_1|a-m| + p_2|m-m| + p_3|\rho_t-m|= d\\
        &  p_1 + p_2 + p_3 = 1.
   \end{array}
   \end{equation*}
    Solving this system of linear equations leads to the second result in Proposition \ref{prop:so_mad}.
   	
   	We now establish that if $d_0\leq d$, the extremal distribution is a two-point distribution. Similarly, by the fact that the extremal distribution is a discrete distribution supported on at most three points, we just need to show there does not exist a one-point or three-point extremal distribution that solves \eqref{eq:so_mean_abs_primal}. We can exclude the possibility of one-point distribution easily, since its mean-absolute deviation is 0. As we described previously, the extremal three-point distribution is supported on $\rho=a,\rho_t$ and $b$, and the largest mean-absolute deviation that can be achieved within this support is given by $ d_0 \coloneqq\frac{2(m-a)(\rho_{t}-m)}{\rho_t-a}$. Since $d\geq d_0$, the extremal distribution can only be a two-point distribution. One of the support points is given by $\rho=a$, while the other one is determined by the value of $d$, which yields the following linear equations: 
   	
   \begin{equation}
   \label{eq:so_mad_2point_prob}
   \begin{array}{cc}
        &  p_1(a-m) + p_2(\rho_2-m) = m\\
        &  p_1|a-m| + p_2|\rho_2-m|= d\\
        &  p_1 + p_2 = 1.
   \end{array}
   \end{equation}
   Solving this system of equations, we obtain the optimal solution explicitly as:
   $$ p_1 = \frac{d}{2(m-a)},\ \rho_1=a; \ p_2 = 1-\frac{d}{2(m-a)},\ \rho_2 = \frac{da+2m(a-m)}{d+2(a-m)}.$$ 
   This completes the proof. 
    \end{proof}

\section{Proofs of Section \ref{sec:data-driven-mad}}\label{sec:proof_sec4}
	\begin{proof}[Proof of Theorem~\ref{thm:data-driven-social}]
	Problem \eqref{eq:ddmad_inf} can be equivalently written as:
	\begin{equation*}
	\begin{array}{ccll}
	\vspace{1mm}&\displaystyle \inf_{\nu \in \mathcal M_+}&\displaystyle \int_{\Xi} f_n(\rho) \mathbb \nu(\rm d\rho) \\
	\vspace{1mm}&\st& \displaystyle \int_{\Xi} |\rho-\hat m| \, \nu(\rm d\rho)= \textit d\\
	\vspace{1mm}&& \displaystyle \int_{\Xi} \rho \, \nu(\rm d\rho)= \textit m\\
	\vspace{1mm}&& \displaystyle \int_{\Xi} \nu(\rm d\rho)= 1\\
	\vspace{1mm}&& m_l \leq  m \leq m_u \\
	&& d_l \leq  d \leq d_u.
	\end{array} 
	\end{equation*}
	Dualizing this optimization problem yields 
		\begin{equation*}
	\begin{array}{ccll}
	&\displaystyle\sup_{\theta \in \RR^4_+,\gamma \in \RR}&\displaystyle \gamma+\theta_1 d_l - \theta_2 d_u + \theta_3 m_l - \theta_4 m_u\\
	&\st& \displaystyle (\theta_1 - \theta_2)|\rho-\hat m| +(\theta_3-\theta_4) \rho + \gamma \leq f_n(\rho) \quad \quad \forall \rho \in [a,b].
	\end{array} 
	\end{equation*}
	Applying algebraic reductions and invoking Lemma \ref{poly_lem_so1} lead to the desired reformulation. The derivation straightforwardly follows that of Theorem \ref{thm:so_mad}---we omit for brevity.
	\end{proof}

	\begin{proof}[Proof of Theorem \ref{thm:data-driven-revenue}]
	    The dual problem is given by
	\begin{equation*}
	\begin{array}{ccll}
	&\displaystyle\sup_{\theta \in \RR^4_+,\gamma \in \RR}&\displaystyle \gamma+\theta_1 d_l - \theta_2 d_u + \theta_3 m_l - \theta_4 m_u\\
	&\st& \displaystyle (\theta_1 - \theta_2)|\rho-\hat m| +(\theta_3-\theta_4) \rho + \gamma \leq r_n(\rho) \quad \quad \forall \rho \in [a,b].
	\end{array} 
	\end{equation*}
	Since the revenue rate function $r_n(\rho)$ is concave for $\rho \geq 0$, the semi-infinite constraints are satisfied if and only if each constraint is satisfied at points $\rho=a,\hat \rho_i, b$, which completes the proof.
	\end{proof}

	\section{Distributionally Robust Model with a Wasserstein Ambiguity Set}
	\label{sec:wasser}

	In this section, we study the DRO model with a Wasserstein ambiguity set \cite{gao2016distributionally,esfahani2017data}. 
	We develop solution schemes to find the optimal threshold strategies for a social optimizer and a revenue maximizer, respectively given by $\hat{n}_s$ and $\hat{n}_r$, such that the worst-case expected benefit rates are maximized.
	Here, the worst-case is taken over the Wasserstein ambiguity set containing all probability distributions (discrete or continuous)  sufficiently close to the discrete empirical distribution, where the closeness between two distributions is measured in terms of the Wasserstein metric  \cite{esfahani2018data}. 
	\begin{defn}(Wasserstein Metric)
		\label{def:wasser}
		For any $r \geq 1$, let $\mathcal M^r(\Xi)$ be the set of all probability distributions $\mathbb P$ supported on $\Xi$ satisfying $\mathbb E_{\mathbb P}[\|  \xi \|^r] = \int_\Xi \|  \xi \|^r \mathbb P(\rm d  \xi) < \infty$. The $r$-Wasserstein distance between two distributions $\mathbb P_1, \mathbb P_2 \in \mathcal P_0^r(\Xi)$ is defined as
		\begin{equation*}
		\mathcal W^r(\mathbb P_1, \mathbb P_2) = \inf\left\{\left(\int_{\Xi^2} \|  \xi_1 -  \xi_2 \|^r \mathbb Q(\rm d\xi_1, \rm d \xi_2)\right)^{\frac{1}{r}}\right\}
		\end{equation*}
		where $\mathbb Q$ is a joint distribution of $ \tilde\xi_1$ and $\tilde \xi_2$ with marginals $\mathbb P_1$ and $\mathbb P_2$, respectively.
	\end{defn} 	
	The Wasserstein distance $\mathcal W^r(\mathbb P_1, \mathbb P_2)$ can be viewed as the ($r$-th root of the) minimum cost for moving the distribution $\mathbb P_1$ to $\mathbb P_2$, where the cost of moving a unit mass from $ \xi_1$ to $ \xi_2$ amounts to $\| \xi_1 -  \xi_2 \|^r$. The
	joint distribution $\mathbb Q$ of $ \tilde\xi_1$ and $ \tilde\xi_2$  is therefore naturally interpreted as a mass transportation plan~\cite{esfahani2018data}. Similarly to the data-driven setting in Section \ref{sec:data-driven-mad}, we assume that we have observed a finite set of $N$ independent realizations 
given by $\{\hat{\rho}_i\}_{i \in [N]}$, where $\hat \rho_i = \hat \lambda_i/\mu$. Using the observations, we define the empirical distribution $\hat {\mathbb P}_N \coloneqq \frac{1}{N} \sum_{i \in [N]} \delta_{\hat \rho_i}$
as the discrete uniform distribution on the samples.

In this paper, we consider the   Wasserstein ambiguity set defined as 
	\begin{equation}
	\label{eq:wasser_ball}
 	\mathcal B_\epsilon(\mathbb {\hat P} _N) \coloneqq \left\{\mathbb P \in \mathcal P_0(\Xi): 
	  \mathcal W^1(\mathbb P, \mathbb {\hat P}_N) \leq \epsilon
	 \right\},
	\end{equation}
	    which is a neighborhood around the empirical distribution. The ambiguity set contains all distributions supported on $\Xi$  that are of type-$1$ Wasserstein distance less than or equal to $\epsilon$ from $\mathbb {\hat P}_N$. By adjusting the radius  $\epsilon$ of the ball, one can control the degree of conservatism of the DRO model. If $\epsilon = 0$, the Wasserstein ball shrinks to a singleton set containing only the empirical distribution $\mathbb{\hat P}_N$. 
	    One can further show that this data-driven DRO model converges to the corresponding true stochastic program as the sample size $N$ tends to infinity \cite{esfahani2017data}. 

 We derive the optimal threshold strategies $\hat n_s$ and $\hat n_r$ for a social optimizer and a revenue maximizer, respectively. As stated in Section \ref{sec:droqueue}, the optimal joining threshold $\tilde n_e$ for an individual customer is independent of the arrival rate, and we have $\tilde n_e = n_e$ from \eqref{eq:io}.

	\subsection{Social Optimizer}
	
	The objective of a social optimizer is to obtain an optimal joining threshold $\hat n_s$ that maximizes the worst-case expected  benefit, i.e., $\hat n_s \in \argmax_{n \in \mathbb Z_+} \{Z_s(n)\}$, where 
	\begin{equation}\label{eq:worst-case-wasser}
	Z_s(n) \coloneqq   \inf_{\mathbb P \in \mathcal B_\epsilon(\mathbb {\hat P} _N) }  \mathbb E_{\mathbb P} \left[f_n(\rhot) \right].
	\end{equation}
	The worst-case expectation is computed over all distributions in the Wasserstein ambiguity set $\mathcal B_\epsilon(\mathbb {\hat P} _N)$ with the support set $\Xi=[a,b]$. 
	\begin{thm}
		\label{wass_so}
		For any $n \geq 1$ and $\mathcal P = \mathcal B_\epsilon(\hat {\mathbb P}_N)$, 
		 the worst-case expectation $Z_s(n)$ coincides with the optimal objective value of the following semidefinite program:	 	
 
		\begin{align*}
		 \nonumber \sup \;\; &-\alpha \epsilon + \frac{1}{N}\sum_{i \in [N]} s_i	\\ 
		 \nonumber  \st \;\;  &\alpha \in \mathbb R_+, s \in \mathbb R^N y^i, z^i \in \mathbb R^{n+3},  X^i, W^i \in \mathbb S^{n+3}_+ &&\forall i \in [N]\\
		 &y_0^i = -s_i+\alpha \hat \rho_i, \; y_1^i = s_i-\alpha-\alpha\hat\rho_i+R\mu-C, y_2^i=\alpha-R\mu\\
		 \nonumber& y^i_3, \cdots, y_n^i = 0, \ y^i_{n+1} = -s_i-\alpha\hat\rho_i-R\mu+C(n+1),\\
		 \nonumber&  y^i_{n+2} = -s_i+\alpha+\alpha\hat\rho_i+R\mu+C-C(n+1) , \ y^i_{n+3} = -\alpha && \forall i \in [N] \\ 
		 \nonumber & z_0^i = -s_i- \alpha\hat \rho_i, \; z_1^i = s_i+\alpha+\alpha\hat\rho_i+R\mu-C, z_2^i=-\alpha-R\mu\\
		 \nonumber& z^i_3, \cdots, z_n^i = 0, \ z^i_{n+1} = -s_i+\alpha\hat\rho_i-R\mu+C(n+1),\\
		 \nonumber&  z^i_{n+2} = -s_i-\alpha-\alpha\hat\rho_i+R\mu+C-C(n+1) , \ z^i_{n+3} = \alpha && \forall i \in [N] \\ 
		 \nonumber & \sum_{u + v = 2l - 1} x^i_{uv} = 0 && \forall l \in [n + 3] \ i \in [N]\\ \nonumber &
		\sum_{q = 0}^{l}\sum_{r=q}^{n+3+q-l} y^i_r {r \choose q} {n+3-r \choose l - q} a^{r-q}\hat\rho_i^q = \sum_{u + v = 2l} x^i_{uv} && \forall l \in [n+3] \cup \{0\} \ i \in [N] \\ \nonumber & \sum_{u + v = 2l - 1} w^i_{uv} = 0 && \forall l \in [n+3] \ i \in [N]\\ \nonumber &
		\sum_{q = 0}^{l}\sum_{r=q}^{n+3+q-l} z^i_r {r \choose q} {n+3-r \choose l - q} \hat\rho_i^{r-q}b^q = \sum_{u + v = 2l} w^i_{uv} && \forall l \in [n+3]\cup \{0\} \ i \in [N].
		\end{align*}		
	\end{thm}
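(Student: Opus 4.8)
The plan is to follow the standard route for type-1 Wasserstein distributionally robust problems and then recycle the polynomial machinery of Theorem~\ref{thm:MAD_so}: dualize the worst-case expectation \eqref{eq:worst-case-wasser} to collapse the infinite-dimensional measure problem into a finite-dimensional one governed by a single multiplier, linearize the inner optimization with per-sample epigraph variables, and convert the resulting semi-infinite polynomial constraints into semidefinite constraints via Lemma~\ref{poly_lem_so1}. First I would invoke the strong-duality theorem for Wasserstein DRO \cite{esfahani2018data,gao2016distributionally}. Since $f_n$ is continuous on the compact support $\Xi=[a,b]$ and the ground metric is $|\cdot|$, the worst-case (infimum) expectation equals
$$Z_s(n)=\sup_{\alpha\geq 0}\left\{-\alpha\epsilon+\frac{1}{N}\sum_{i\in[N]}\inf_{\rho\in[a,b]}\left[f_n(\rho)+\alpha|\rho-\hat\rho_i|\right]\right\}.$$
Introducing an epigraph variable $s_i$ for each inner infimum, i.e.\ imposing $s_i\le f_n(\rho)+\alpha|\rho-\hat\rho_i|$ for all $\rho\in[a,b]$, this becomes
$$Z_s(n)=\sup\left\{-\alpha\epsilon+\frac{1}{N}\sum_{i\in[N]}s_i:\ \alpha\geq 0,\ s_i\le f_n(\rho)+\alpha|\rho-\hat\rho_i|\ \ \forall\rho\in[a,b],\ \forall i\in[N]\right\}.$$

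Next I would split each semi-infinite constraint at the breakpoint $\hat\rho_i$, where the absolute value is kinked: the condition over $[a,b]$ is equivalent to $s_i\le f_n(\rho)+\alpha(\hat\rho_i-\rho)$ for $\rho\in[a,\hat\rho_i]$ together with $s_i\le f_n(\rho)+\alpha(\rho-\hat\rho_i)$ for $\rho\in[\hat\rho_i,b]$. Substituting the definition \eqref{eq:social_rate} of $f_n$ and clearing denominators by multiplying through by $(1-\rho)(1-\rho^{n+1})=(1-\rho)^2(1+\rho+\cdots+\rho^n)$, which is nonnegative on $\RR_+$ and hence preserves the inequality direction, each constraint turns into a univariate polynomial inequality of degree $n+3$, namely $g_1^i(\rho)=\sum_{r=0}^{n+3}y_r^i\rho^r\ge 0$ on $[a,\hat\rho_i]$ and $g_2^i(\rho)=\sum_{r=0}^{n+3}z_r^i\rho^r\ge 0$ on $[\hat\rho_i,b]$. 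The leading $\rho^{n+3}$ term arises solely from $\alpha(\hat\rho_i-\rho)\cdot(1-\rho)(1-\rho^{n+1})$, which gives $y_{n+3}^i=-\alpha$ and $z_{n+3}^i=\alpha$, and the constant term collects to $y_0^i=-s_i+\alpha\hat\rho_i$; the remaining coefficients are the affine functions of $(\alpha,s_i)$ listed in the statement. This bookkeeping is identical in spirit to \eqref{eq:so_MAD_poly}, with $\hat\rho_i$ now playing the role the mean $m$ played there, so I would record the coefficient vectors and omit the routine expansion.

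Finally I would apply Lemma~\ref{poly_lem_so1} with $k=n+3$ to each of these $2N$ polynomial nonnegativity conditions, on $[a,\hat\rho_i]$ and $[\hat\rho_i,b]$ respectively, introducing the positive semidefinite certificates $X^i,W^i$ together with the accompanying moment-matching equalities (the $\sum_{u+v=2l-1}=0$ and $\sum_{u+v=2l}=\cdots$ constraints). Assembling the $N$ pairs of certificates reproduces exactly the semidefinite program in the statement, completing the proof. I expect the main obstacle to be the duality step: one must verify that the hypotheses of the Wasserstein strong-duality theorem are met—continuity (hence upper semicontinuity of $-f_n$) and integrability of the loss on the compact support, so that the $\inf$ over measures and the $\sup$ over $\alpha$ may be exchanged with no duality gap and the single nonnegative multiplier $\alpha$ faithfully encodes the radius constraint $\mathcal W^1(\mathbb P,\hat{\mathbb P}_N)\le\epsilon$. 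Once that is secured, the remaining steps are a direct, if tedious, transcription of the MAD argument with the sample point $\hat\rho_i$ as the per-sample breakpoint.
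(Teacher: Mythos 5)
Your proposal is correct and follows essentially the same route as the paper's proof: strong Wasserstein duality (the paper cites \cite[Theorem 4.2]{esfahani2017data} directly in the $s_i$-form, which is just the epigraph formulation of your per-sample infima), splitting each semi-infinite constraint at the kink $\hat\rho_i$, clearing the denominators of $f_n$, and invoking Lemma~\ref{poly_lem_so1} with $k=n+3$ on each of the $2N$ polynomial inequalities. Your explicit observation that the multiplier $(1-\rho)(1-\rho^{n+1})=(1-\rho)^2(1+\rho+\cdots+\rho^n)\geq 0$ preserves the inequality direction is a detail the paper glosses over as ``algebraic reductions,'' and it is a worthwhile addition.
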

	\begin{proof}
		
		The distributionally robust model with the ambiguity set \eqref{eq:wasser_ball} can be equivalently written as
    	\begin{align*}
    	\inf \quad  & \frac{1}{N} \sum_{i \in [N]} \int_\Xi f_n(\rho) \mathbb P_i(\rm d \rho) \\  \text{s.t.} \quad & \mathbb P_i \in \mathcal P_0(\Xi) \quad \forall i \in [N] \\ 
    	& \frac{1}{N} \sum_{i \in [N]} \int_{\Xi} \|  \rho -  {\hat \rho}_i \| \mathbb P_i(\rm d \rho) \leq \epsilon.
        \end{align*}	
		Its strong dual problem is given by~\cite[Theorem 4.2]{esfahani2017data}
		\begin{equation*}
		\begin{array}{ccll}
		&\displaystyle\sup _{\alpha \in \RR_+, \bm s \in \RR^N}&\displaystyle -\alpha \epsilon + \frac{1}{N} \sum_{i \in [N]} s_i\\
		&\st& \displaystyle s_i - \alpha \| \rho - \hat \rho_i\| \leq f_n(\rho)  \qquad \forall i \in [N] \ \forall \rho \in [a,b].
		\end{array} 
		\end{equation*}
		We can deal with each constraint separately for the cases $\rho \leq \hat \rho_i$ and $\rho \geq \hat \rho_i$, and consequently we have
		\begin{equation*}
		\begin{array}{ccll}
		&\displaystyle\sup _{\alpha \in \RR_+, \bm s \in \RR^N}&\displaystyle -\alpha \epsilon  + \frac{1}{N} \sum_{i \in [N]} s_i\\
		&\st& \displaystyle s_i + \alpha ( \rho - \hat \rho_i) \leq f_n(\rho)  \qquad \forall i \in [N] \ \forall \rho \in [a,\hat \rho_i]\\
		&&\displaystyle s_i - \alpha ( \rho - \hat \rho_i) \leq f_n(\rho)  \qquad \forall i \in [N] \ \forall \rho \in [\hat \rho_i,b].
		\end{array} 
		\end{equation*}
		Substituting the definition of $f_n(\rho)$ in \eqref{eq:social_rate} and applying algebraic reductions yield the following polynomial inequalities for each $i \in [N]$:
	    \begin{align}
		\label{so_const}
		\nonumber &(-s_i + \alpha \hat \rho_i) \rho^0 + (s_i - \alpha - \alpha \hat \rho_i +R\mu -C) \rho + (\alpha-R\mu) \rho^2 + (s_i - \alpha \hat \rho_i - R\mu +C(n+1)) \rho^{n+1} \\ \nonumber &\hspace{5.5em} + (-s_i+\alpha +\alpha \hat \rho_i +R\mu +C-C(n+1)) \rho^{n+2} -\alpha \rho^{n+3} \geq 0 \quad \forall \rho \in [a,\hat \rho_i], \\
		\nonumber &(-s_i - \alpha \hat \rho_i) \rho^0 + (s_i + \alpha + \alpha \hat \rho_i +R\mu -C) \rho + (-\alpha-R\mu) \rho^2 + (s_i + \alpha \hat \rho_i - R\mu +C(n+1)) \rho^{n+1} \\ \nonumber &\hspace{5.5em} + (-s_i-\alpha -\alpha \hat \rho_i +R\mu +C-C(n+1)) \rho^{n+2}+\alpha \rho^{n+3} \geq 0 \quad \forall \rho \in [\hat \rho_i,b]. \\
		\end{align}
		The inequalities are of the form $g_1^i(\rho) = \sum_{r = 0}^{n+3} y^i_r \rho^r \geq 0$ for $\rho \in [a,\hat \rho_i]$ and $g_2^i(\rho) = \sum_{r = 0}^{n+3} z^i_r \rho^r \geq 0$ for $\rho \in [\hat \rho_i,b]$, where $ y^i$ and $ z^i$ represent the coefficients of the respective polynomial inequalities. We next invoke the result of Lemma \ref{poly_lem_so1} for every $i \in [N]$ to express the inequalities in \eqref{so_const} as semidefinite constraints. This leads to  the desired semidefinite program, which completes the proof. 
	\end{proof}
		To determine an optimal joining threshold,  we compute the worst-case expected benefit rate ${Z}_s(n)$ for every $n \in \mathbb Z_+$. $1\leq n \leq n_e$,  using the result of Theorem \ref{wass_so}, and then select the  best threshold  $\hat n_s \in \argmax_{n\in \mathbb Z_+}\{ Z_s(n)\}$.
		
\subsection{Revenue Maximizer}
	The objective of a revenue maximizer is to find an optimal threshold $\hat n_r$ that maximizes the worst-case expected revenue rate of a firm, i.e., $\hat n_r \in \argmax_{n \in \mathbb Z_+} \{Z_r(n)\}$, where the worst-case expectation is computed over all the distributions in the Wasserstein ambiguity set $\mathcal B_\epsilon(\mathbb {\hat P} _N)$ defined by \eqref{eq:wasser_ball} with support set $\Xi = [a.b]$. The worst-case expected profit rate $ Z_r(n)$ is given by
	\begin{equation}
	\label{eq:dr_rm}
	Z_r(n) \coloneqq \inf_{\mathcal P \in \mathcal B_\epsilon(\mathbb {\hat P} _N)}  \mathbb E_{\mathbb P} \left[r_n(\rhot) \right].
	\end{equation}		
	
	\begin{thm}
	\label{wass_rm}
	For any $n \geq 1$, the worst-case expectation $Z_r(n)$ coincides with the optimal objective value of the following linear program:	
	
	\begin{equation*}
	\begin{array}{ccll}
	&\displaystyle \sup _{\alpha \in \RR_+, s \in \RR^N}&\displaystyle -\alpha \epsilon  + \frac{1}{N} \sum_{i \in [N]} s_i\\
	&\st& \displaystyle s_i + \alpha ( a - \hat \rho_i)  \leq r_n(a)  & \forall i \in [N]\\
	&& \displaystyle s_i  \leq r_n(\hat \rho_i)  & \forall i \in [N]\\
	&& \displaystyle s_i - \alpha ( b - \hat \rho_i) \leq r_n(b)  & \forall i \in [N].
	\end{array} 
	\end{equation*} 
	\end{thm}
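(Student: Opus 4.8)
The plan is to mirror the derivation used for the social optimizer in Theorem~\ref{wass_so}, but to exploit the concavity of $r_n$ established in Lemma~\ref{lem:rm_concave}, which collapses the semidefinite program into a finite linear program. First I would rewrite the worst-case expectation \eqref{eq:dr_rm} as an infinite-dimensional moment problem over per-sample conditional distributions $\{\mathbb P_i\}_{i\in[N]}$, namely minimizing $\frac1N\sum_{i\in[N]}\int_\Xi r_n(\rho)\,\mathbb P_i(\mathrm d\rho)$ subject to $\mathbb P_i\in\mathcal P_0(\Xi)$ and the Wasserstein budget $\frac1N\sum_{i\in[N]}\int_\Xi\|\rho-\hat\rho_i\|\,\mathbb P_i(\mathrm d\rho)\le\epsilon$, exactly as in the proof of Theorem~\ref{wass_so}. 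Invoking the strong duality result \cite[Theorem 4.2]{esfahani2017data}, valid here because $\Xi=[a,b]$ is compact, then yields the dual
\[
\begin{array}{ccll}
&\displaystyle\sup_{\alpha\in\RR_+,\,s\in\RR^N}&\displaystyle -\alpha\epsilon + \frac1N\sum_{i\in[N]} s_i\\
&\st& s_i - \alpha\|\rho-\hat\rho_i\| \le r_n(\rho)\quad \forall i\in[N],\ \forall\rho\in[a,b],
\end{array}
\]
whose optimal value coincides with $Z_r(n)$.

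Next I would split each absolute value according to the sign of $\rho-\hat\rho_i$, producing the two families of semi-infinite constraints $s_i+\alpha(\rho-\hat\rho_i)\le r_n(\rho)$ on $[a,\hat\rho_i]$ and $s_i-\alpha(\rho-\hat\rho_i)\le r_n(\rho)$ on $[\hat\rho_i,b]$. The crucial step, and the point where the revenue case departs from the social case, is the observation that each left-hand side is affine in $\rho$, while $r_n$ is concave on $\RR_+\supseteq[a,b]$ by Lemma~\ref{lem:rm_concave}. Hence the gap between the right- and left-hand sides is a concave function of $\rho$, and a concave function attains its minimum over a compact interval at one of the two endpoints; therefore nonnegativity at the endpoints certifies nonnegativity on the whole interval. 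Each semi-infinite constraint thus reduces to imposing the inequality only at the endpoints: the first family collapses to $s_i+\alpha(a-\hat\rho_i)\le r_n(a)$ together with $s_i\le r_n(\hat\rho_i)$, and the second to $s_i\le r_n(\hat\rho_i)$ together with $s_i-\alpha(b-\hat\rho_i)\le r_n(b)$.

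Collecting these and discarding the duplicated middle constraint yields precisely the finite linear program stated in the theorem. The main substantive ingredient is the concavity of $r_n$; once that is in hand the argument is essentially immediate, since no semidefinite sum-of-squares certificate (as in Lemma~\ref{poly_lem_so1}) is required to handle the semi-infinite constraints. Accordingly, I do not expect a genuine obstacle beyond the bookkeeping of the sign split and a careful statement of the endpoint-minimization principle for concave functions; the one point deserving explicit justification is that strong duality genuinely holds, so that $Z_r(n)$ equals the dual optimum, which follows from \cite[Theorem 4.2]{esfahani2017data}.
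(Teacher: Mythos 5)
Your proposal is correct and follows essentially the same route as the paper: dualize the worst-case expectation via the strong duality result of \cite[Theorem 4.2]{esfahani2017data}, then use the concavity of $r_n$ (Lemma~\ref{lem:rm_concave}) to reduce each semi-infinite constraint to the three points $\rho = a, \hat\rho_i, b$. The only difference is that you spell out the justification the paper leaves implicit---splitting the absolute value so the left-hand side is affine on each piece and invoking the endpoint-minimization property of concave functions---which is a welcome elaboration rather than a departure.
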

	\begin{proof}
		The strong dual problem of $\inf_{\mathbb P \in \mathcal B_\epsilon(\mathbb {\hat P} _N)}  \mathbb E_{\mathbb P} \left[ r_n(\rhot) \right]$ is given by
		\begin{equation*}
		\begin{array}{ccll}
		&\displaystyle\sup _{\alpha \in \RR_+,  s \in \RR^N}&\displaystyle -\alpha \epsilon +  \frac{1}{N} \sum_{i \in [N]} s_i\\
		&\st& \displaystyle s_i - \alpha \| \rho - \hat \rho_i\|\leq r_n(\rho)  \qquad \forall i \in [N]\ \forall \rho \in [a,b].
		\end{array} 
		\end{equation*}
		Since the revenue rate function $r_n(\rho)$ is concave for $\rho \geq 0$, the semi-infinite constraints are satisfied if and only if each constraint is satisfied at three points $\rho=a,\hat \rho_i, b$, and consequently we have
		\begin{equation*}
		\begin{array}{ccll}
		Z_r(n) \coloneqq&\displaystyle\sup _{\alpha \in \RR_+,  s \in \RR^N}&\displaystyle -\alpha \epsilon + \frac{1}{N} \sum_{i \in [N]} s_i\\
		&\st& \displaystyle s_i + \alpha ( a - \hat \rho_i)\leq r_n(a)  \qquad& \forall i \in [N]\\
		&& \displaystyle s_i  + \alpha ( \hat \rho_i - \hat \rho_i)\leq r_n(\hat \rho_i)  \qquad& \forall i \in [N]\\
		&& \displaystyle s_i - \alpha ( b - \hat \rho_i) \leq r_n(b)  \qquad &\forall i \in [N].
		\end{array} 
		\end{equation*}
		Thus, the claim follows. 
	\end{proof}
	We compute the worst-case expected profit rate ${Z}_r(n)$ for every $n \in \mathbb Z_+$, $1 \leq n \leq \hat n_e$, using the result of Theorem \ref{wass_rm}, and obtain an optimal joining threshold $\hat n_r$ such that $\hat n_r \in \argmax_{n \in \mathbb Z_+}\{ Z_r(n)\}$.

\end{document}